\theoremstyle{plain}
\newtheorem{theorem}{Theorem}[section]
\newtheorem{corollary}[theorem]{Corollary}
\newtheorem{proposition}[theorem]{Proposition}
\newtheorem{lemma}[theorem]{Lemma}
\theoremstyle{definition}
\newtheorem{definition}[theorem]{Definition}
\newtheorem{remark}[theorem]{Remark}
\newtheorem{example}[theorem]{Example}
\DeclareMathOperator{\ind}{index}
\DeclareMathOperator{\Kind}{\textit{K}-index}
\DeclareMathOperator{\Gind}{\textit{G}-index}
\DeclareMathOperator{\Ad}{Ad}
\DeclareMathOperator{\ad}{ad}
\DeclareMathOperator{\rank}{rank}
\DeclareMathOperator{\reg}{reg}
\DeclareMathOperator{\ncw}{ncw}
\DeclareMathOperator{\Spin}{Spin}
\DeclareMathOperator{\SO}{SO}
\DeclareMathOperator{\U}{U}
\DeclareMathOperator{\GL}{GL}
\DeclareMathOperator{\Hom}{Hom}
\DeclareMathOperator{\End}{End}
\DeclareMathOperator{\Res}{Res}
\DeclareMathOperator{\DInd}{D-Ind}
\DeclareMathOperator{\KInd}{K-Ind}
\DeclareMathOperator{\HInd}{H-Ind}
\DeclareMathOperator{\HCross}{H-Cross}
\DeclareMathOperator{\HP}{SEHamP}
\DeclareMathOperator{\CHPS}{CSEHamPS}
\DeclareMathOperator{\dom}{dom}
\begin{document}

\title{Quantisation commutes with reduction at discrete series representations of semisimple groups}
\author{Peter Hochs}
\maketitle

\hyphenation{semi-simple pre-quan-ti-sa-tion}

\newcommand{\D}{\slash \!\!\!\! D}

\newcommand{\g}{\mathfrak{g}}
\newcommand{\h}{\mathfrak{h}}
\newcommand{\kk}{\mathfrak{k}}
\newcommand{\torus}{\mathfrak{t}}
\newcommand{\p}{\mathfrak{p}}
\newcommand{\gse}{\mathfrak{g}^*_{\mathrm{se}}}
\newcommand{\kse}{\mathfrak{k}^*_{\mathrm{se}}}

\newcommand{\C}{\mathbb{C}}
\newcommand{\R}{\mathbb{R}}
\newcommand{\Z}{\mathbb{Z}}
\newcommand{\N}{\mathbb{N}}

\newcommand{\B}{\mathcal{B}}
\newcommand{\K}{\mathcal{K}}
\renewcommand{\O}{\mathcal{O}}
\newcommand{\E}{\mathcal{E}}
\newcommand{\SSS}{\mathcal{S}}
\newcommand{\X}{\mathfrak{X}}
\newcommand{\HH}{\mathcal{H}}

\newcommand{\KK}{K \! K}
\newcommand{\Bigwedge}{\textstyle{\bigwedge}}

\newcommand{\QSpinc}{Q_{\Spin^c}}

\newcommand{\ddt}{\left. \frac{d}{dt}\right|_{t=0}}

\begin{abstract}
Using the analytic assembly map that appears in the Baum-Connes conjecture in noncommutative geometry,
 we generalise the Guillemin--Sternberg conjecture that `quantisation commutes with reduction' to (discrete series representations of) semisimple groups $G$ with maximal compact subgroups $K$ acting cocompactly
 on symplectic manifolds. We prove this generalised statement in cases where the image of the momentum map in question lies in the set of strongly elliptic elements
 $\g^*_{\mathrm{se}}$, the set of elements of $\g^*$ with compact stabilisers. This assumption on the image of the momentum map is equivalent to the assumption that
  $M = G \times_K N$, for a compact Hamiltonian $K$-manifold $N$. The proof comes down to a reduction to the compact case. This reduction is based on a   `quantisation commutes with induction'-principle, and involves a notion of induction of Hamiltonian group actions. This principle, in turn, is based on
a version of the naturality of the assembly map for the inclusion  $K \hookrightarrow G$.
\end{abstract}

\tableofcontents

\section*{Introduction}

In this paper we generalise Guillemin and Sternberg's `quantisation commutes with reduction' conjecture to cocompact Hamiltonian actions by semisimple Lie groups. The compact case of this conjecture was proved in \cite{JK,Meinrenken,MS,Paradan1,Paradan2,TZ,Vergne,Ver}. A version for Hamiltonian Lie \emph{groupoid} actions was proved by Bos in \cite{Bos}.

The version of the Guillemin--Sternberg conjecture that we will generalise is the one proved by Paradan in
\cite{Paradan2}. In this version, one considers $\Spin^c$-quantisation, instead of the Dolbeault-quantisation used in \cite{JK,Meinrenken,MS,Paradan1,TZ,Vergne,Ver}. Paradan's result is the following. Suppose a compact Lie group $K$ acts in Hamiltonian fashion on a compact symplectic manifold $(M, \omega)$. Suppose that the cohomology class $[\omega] + \frac{1}{2}c_1(TM, J)$ is integral for some $K$-equivariant almost complex structure $J$ on $M$. If the stabilisers of the action of $K$ on $M$ are abelian, then one has
\begin{equation}\label{eq cpt GS}
\Kind \,  \D_M = \bigoplus_{\lambda \in \Lambda_+} \ind \,  \D_{M_{\lambda + \rho}} V_{\lambda}.
\end{equation}
Here $\D_M$ is a $\Spin^c$-Dirac operator on $M$. Its $K$-index is interpreted as the quantisation of the action of $K$ on $M$. On the right-hand side of \eqref{eq cpt GS}, $\Lambda_+ \subset i\torus^*$ denotes the set of dominant integral weights relative to a choice of maximal torus and positive roots, and $\rho$ is half the sum of the positive roots. By $V_{\lambda}$ we mean the irreducible representation of $K$ with highest weight $\lambda$, and $(M_{\lambda + \rho}, \omega_{\lambda + \rho})$ is the symplectic reduction of $(M, \omega)$ at $-i(\lambda + \rho)$. If this symplectic reduction is not an orbifold (which can occur if $-i(\lambda + \rho)$ is not a regular value of the momentum map), then the index of $\D_{M_{\lambda + \rho}}$ should be replaced by a more subtle definition of the quantisation of $(M_{\lambda + \rho}, \omega_{\lambda + \rho})$.

In this paper, we generalise \eqref{eq cpt GS} to discrete series representations of semisimple Lie groups (Theorem \ref{GSss}). Because we only look at discrete series representations, it is a natural assumption that the image of the momentum map lies inside the set of strongly elliptic elements $\gse \subset \g^*$, defined in \eqref{eq def gse}. Indeed, (some) coadjoint orbits in $\gse$ correspond to discrete series representations, and the quantisation of a Hamiltonian action should decompose into irreducible representations corresponding to coadjoint orbits in the image of the momentum map.

\subsection*{Outline of this paper}

The strategy of our proof of \eqref{eq cpt GS} for a cocompact Hamiltonian action of a semisimple Lie group $G$ on a symplectic manifold $(M, \omega)$ is to reduce this statement to the (known) case of the action of a maximal compact subgroup $K<G$ on the compact submanifold $N:= \bigl(\Phi^M \bigr)^{-1}(\kk^*)$ of $M$, with $\Phi^M: M \to \g^*$ the momentum map. We will see in Section \ref{cpt <-> ncpt} that there are inverse constructions
\[
\begin{split}
\HCross^G_K: \quad & G \circlearrowright M \quad \rightsquigarrow \quad K \circlearrowright N := \bigl(\Phi^M \bigr)^{-1}(\kk^*); \\
\HInd^G_K: \quad & K \circlearrowright N \quad \rightsquigarrow \quad G \circlearrowright M := G \times_K N.
\end{split}
\]
These are called Hamiltonian cross-section and Hamiltonian induction, respectively. In Section \ref{sec preqSpin}, we define induction procedures for prequantisations and $\Spin^c$-structures, compatible with this Hamiltonian induction procedure.

The central result in this paper is Theorem \ref{thm [Q,I]=0}, which states that `quantisation commutes with induction'. Roughly speaking, this is expressed by the diagram
\[
\xymatrix{
(M = G \times_K N, \omega) \ar@{|->}[r]^-{Q_G} & Q_G(M, \omega)   \in K_0(C^*_rG) \\
(N, \nu) \ar@{|->}[u]^{\HInd_K^G} \ar@{|->}[r]^-{Q_K} & Q_K(N, \nu)  \in R(K). \ar@<-5ex>[u]_{\DInd_K^G}
}
\]
Here $R(K)$ is the representation ring of $K$, $K_0(C^*_rG)$ is the $K$-theory of the reduced $C^*$-algebra of $G$, and $\DInd_K^G$ is the Dirac induction map used in the Connes-Kasparov conjecture (see \cite{Wassermann}). In Section \ref{[Q,I]=0}, we tie the other sections in this paper together, by showing how Theorem \ref{thm [Q,I]=0} implies our quantisation commutes with reduction result, Theorem \ref{GSss}, and by sketching a proof of Theorem \ref{thm [Q,I]=0}. The details of this proof are filled in in Sections \ref{natincl} and \ref{Dirac}.

In Section \ref{natincl}, we prove a result (Theorem \ref{natKG}) that can be interpreted as `naturality of the assembly map for the inclusion $K \hookrightarrow G$' (cf.\ \cite{MV}). In Section \ref{Dirac}, we show that this naturality result is well-behaved with respect to the $K$-homology classes of the Dirac operators we use, thus proving Theorem \ref{thm [Q,I]=0}.

\subsection*{Acknowledgements}
The author is very grateful to Paul-\'Emile Paradan, for explaining his ideas in \cite{Paradan1}, for correcting a mistake in a preliminary version of this paper, and for pointing out Lafforgue's article \cite{Lafforgue}, among other
things.  He is also much indebted to Gert Heckman, for advice on group theory and symplectic geometry.

In addition, the author would like to thank Rogier Bos, Eli Hawkins and Klaas Landsman for useful discussions,
and Ulrich Bunke, Siegfried Echterhoff, Dan Kucerovsky and Vincent Lafforgue for kindly answering some questions via e-mail.

\section{Quantisation commutes with reduction for semisimple groups}

In this section, we introduce the terminology we need to state our main result, Theorem \ref{GSss}.

\subsection{$\Spin^c$-quantisation} \label{sec quant Spinc}

Let $(M, \omega)$ be a compact symplectic manifold, equipped with an action by
a compact Lie group $K$, which leaves $\omega$ invariant. Let $J$ be a $K$-equivariant almost complex structure on $M$ (which need not be compatible with $\omega$). Consider the $K$-equivariant line bundle
\[
 \Bigwedge_{\C}^{0, d_M}(TM, J) \to M,
\]
where $d_M$ is the dimension of $M$. Let $L^{2\omega} \to M$ be a $K$-equivariant line bundle whose first Chern class is $[2\omega]$. Suppose that the line bundle
\[
 L^{2\omega} \otimes \Bigwedge_{\C}^{0, d_M}(TM, J) \to M
\]
has a square root $L_J$. Let $P \to M$ be the $\Spin^c$-structure\footnote{We sloppily use the term `$\Spin^c$-structure' for a principal $\Spin^c$-bundle that induces such a structure.} on $M$ associated to $J$ and $L_J$, as described for example in \cite{GGK}, Proposition D.50. Then the determinant line bundle of $P$ is isomorphic to $L^{2\omega}$.

Let $\Delta_{d_M}$ be the standard $2^{d_M/2}$-dimensional representation of $\Spin^c(d_M)$ (see e.g.\ \cite{Friedrich,GVF,LM} ). Let
\[
 \SSS := P \times_{\Spin^c(d_M)} \Delta_{d_M} \to M
\]
be the spinor bundle associated to $P$. The \emph{Clifford action} $c_{TM}$ of $TM$ on $\SSS$ is defined by
\[
 c_{TM}([p,x])[p, \delta] := [p, x\cdot \delta],
\]
where $[p,x] \in P \times_{\Spin^c(d_M)} \R^{d_M} \cong TM$ and $[p,\delta] \in P \times_{\Spin^c(d_M)} \Delta_{d_M} = \SSS$. Here the dot in $x \cdot \delta$ denotes the standard Clifford action of $\R^{d_M}$ on $\Delta_{d_M}$.

Let $\nabla$ be a $K$-equivariant connection on $\SSS$ The \emph{$\Spin^c$-Dirac operator} $\D_M^{L^{2\omega}}$ on $\SSS$, associated to $\nabla$, is defined by the property that for all orthonormal local frames $\{e_1, \ldots, e_{d_M}\}$ of $TM$, one locally has
\[
{\D}_M^{L^{2\omega}} = \sum_{j=1}^{d_M} c_{TM}(e_j) \nabla_{e_j}.
\]

The principal symbol $\sigma_{\D_M^{L^{2\omega}}}$ of $\D_M^{L^{2\omega}}$ is given by
\[
 \sigma_{\D_M^{L^{2\omega}}}(\xi)s = c_{TM}(i\xi^*)s,
\]
where $\xi \in T^*M$, $s \in \SSS$, and $\xi^* \in TM$ is the tangent vector associated to $\xi$ by the Riemannian metric on $M$ induced by the Euclidean metric on $\R^{d_M}$ via the isomorphism $TM
\cong P \times_{\Spin^c(d_M)} \R^{d_M}$. Since
\[
 \sigma_{\D_M^{L^{2\omega}}}(\xi)^2s = -\|\xi\|^2 s
\]
for all $\xi$ and $s$, the $\Spin^c$-Dirac operator $\D_M^{L^{2\omega}}$ is elliptic.

The representation $\Delta_{d_M}$ of $\Spin^c(d_M)$ has a natural $\Z_2$-grading $\Delta_{d_M} = \Delta_{d_M}^+ \oplus \Delta_{d_M}^-$, which induces a grading
\[
 \SSS = \SSS^+ \oplus \SSS^-.
\]
Since the Clifford action of $\R^{d_M}$ on $\Delta_{d_M}$ interchanges the subspaces $\Delta_{d_M}^{\pm}$, we have two operators
\[
 \begin{split}
  \D_M^+ := \D_M^{L^{2\omega}}|_{\Gamma^{\infty}(\SSS^+)}: &\Gamma^{\infty}(\SSS^+) \to \Gamma^{\infty}(\SSS^-); \\
 \D_M^- := \D_M^{L^{2\omega}}|_{\Gamma^{\infty}(\SSS^-)}: &\Gamma^{\infty}(\SSS^-) \to \Gamma^{\infty}(\SSS^+).
 \end{split}
\]
Because $\D_M$ is elliptic and $M$ is compact, the kernels of the operators $\D_M^{\pm}$ are finite-dimensional representations of $K$. Since $\D_M^{L^{2\omega}}$ is symmetric with respect to the $L^2$-inner product on compactly supported sections of $\SSS$ (see e.g.\ \cite{GVF}, Proposition 9.13), the operators $\D_M^{\pm}$ are each other's formal adjoints. We slightly abuse terminology by setting
\[
 \Kind \, \D_M^{L^{2\omega}} := [\ker \D_M^+] - [\ker \D_M^-] \quad \in R(K),
\]
the representation ring of $K$. This index is by definition the \emph{$\Spin^c$-quantisation of the action of $K$ on $(M, \omega)$}:
\begin{definition}
 \[
  \QSpinc^K(M, \omega) := \Kind \, \D_M^{L^{2\omega}} \quad \in R(K).
 \]
\end{definition}

\subsection{Quantisation commutes with reduction, the compact case}

We will continue to use the notation and assumptions of Subsection \ref{sec quant Spinc}. Now suppose in addition that the action of $K$ on $(M, \omega)$ is \emph{Hamiltonian}, and let $\Phi: M \to \kk^*$ be a momentum map.

\subsubsection*{Quantisation of symplectic reductions}

Suppose that $\xi \in \kk^*$ is a regular value of $\Phi$, and that the stabiliser $K_{\xi}$ acts freely on $\Phi^{-1}(\xi)$. Then the symplectic reduction $M_{\xi} := \Phi^{-1}(\xi)/{K_{\xi}}$ is a smooth manifold.

In \cite{Paradan2}, Paradan shows that $P$ induces a $\Spin^c$-structure $P_{\xi}$ on $M_{\xi}$ whose determinant line bundle is $L^{2\omega_{\xi}}$. The $\Spin^c$-quantisation of $(M_\xi, \omega_{\xi})$ is then defined, as in Subsection \ref{sec quant Spinc}, as the index of the $\Spin^c$-Dirac operator $\D_{M_{\xi}}^{L^{2\omega_{\xi}}}$ on the spinor bundle $\SSS_{\xi}$ of $P_{\xi}$, with respect to any connection on $\SSS_{\xi}$:
\[
\QSpinc(M_{\xi}, \omega_{\xi}) = \ind \, \D_{M_{\xi}}^{L^{2\omega_{\xi}}}.
\]

Even if the action of $K_{\xi}$ on $\Phi^{-1}(\xi)$ is not assumed to be free, it is still locally free by Smale's lemma. Then the reduced space $M_{\xi}$ is an orbifold. It is then still possible to define a  $\Spin^c$-Dirac operator on $M_{\xi}$, and its index is still denoted by $\QSpinc(M_{\xi}, \omega_{\xi})$. This index can be computed via Kawasaki's orbifold index theorem (see \cite{kawasaki}, or \cite{Meinrenken}, Theorem 3.3).

\subsubsection*{Quantisation commutes with reduction}

Let $T < K$ be a maximal torus, with Lie algebra $\torus \subset \kk$. Let $\torus^*_+ \subset \torus^*$ be a choice of positive Weyl chamber. Let $R^+$ be the set of positive roots of $(\kk, \torus)$ with respect to $\torus^*_+$, and write $\rho := \frac{1}{2}\sum_{\alpha \in R^+} \alpha$.

Let $\Lambda_+ \subset i\torus^*_+$ be the set of dominant weights of $(\kk, \torus)$. For $\lambda \in \Lambda_+$, we will denote the irreducible representation of $K$ with highest weight $\lambda$ by $V_{\lambda}$. Let $R^{\lambda}_K: R(K) \to \Z$ be the multiplicity function of $V_{\lambda}$. We wil write
$(M_{\lambda}, \omega_{\lambda}) := (M_{-i\lambda}, \omega_{-i\lambda})$.

The $\Spin^c$-version of the Guillemin--Sternberg conjecture is the following statement. This is Theorem 1.7 from \cite{Paradan2}.
\begin{theorem} \label{thm [Q,R]=0 cpt spin}
 If the stabilisers of the action of $K$ on $M$ are \emph{abelian}, the for all $\lambda \in \Lambda_+ \cap i\Phi(M)$,
\[
 R_K^{\lambda}\bigl(\QSpinc(M, \omega) \bigr) = \QSpinc(M_{\lambda + \rho}, \omega_{\lambda + \rho}).
\]
If $\lambda \in \Lambda_+ \setminus i\Phi(M)$, then the integer on the left hand side equals zero.
\end{theorem}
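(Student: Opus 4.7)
The plan is to follow Paradan's Witten-deformation approach, reducing the global index computation on $M$ to local contributions from the critical set of $\|\Phi\|^2$. Fix a $K$-invariant Riemannian metric on $M$ compatible with $J$, and use an invariant inner product to identify $\kk\cong\kk^*$. Form the Kirwan-type vector field $\HH^\Phi$ on $M$ whose value at $m$ is $J_m$ applied to the fundamental vector field generated by $\Phi(m)\in\kk$. Its zero set $Z$ equals the critical set of $\|\Phi\|^2$ and decomposes as a finite disjoint union of compact $K$-invariant pieces $Z_\beta = K\cdot\bigl(M^\beta\cap\Phi^{-1}(\beta)\bigr)$, indexed by $\beta$ in a finite set $\B\subset\kk^*_+$.

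Next I would deform the principal symbol of $\D_M^{L^{2\omega}}$, at a point $m$, by replacing $\xi$ with $\xi - t\,\HH^\Phi(m)^\flat$ for $t\geq 0$; this yields a family $\sigma^t$ of symbols with $\sigma^0 = \sigma_{\D_M^{L^{2\omega}}}$ whose characteristic set, for $t>0$, is concentrated near $Z$. The deformed symbol is transversally elliptic in Atiyah's sense, and a homotopy argument shows its $K$-equivariant index is independent of $t$. The excision property of the transversally elliptic index then yields
\[
\Kind \, \D_M^{L^{2\omega}} \;=\; \sum_{\beta\in\B}\Kind\bigl(\sigma^t|_{U_\beta}\bigr),
\]
where $U_\beta$ is a small $K$-invariant tubular neighbourhood of $Z_\beta$.

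The core of the argument is the local computation of $\Kind(\sigma^t|_{U_\beta})$ via a slice-theorem-based normal form. The abelian-stabilisers hypothesis forces $K_\beta$ to be a torus, so the weights of the isotropy representation on the normal bundle to $Z_\beta$ can be read off explicitly. Combining the spinor representation of the normal bundle with the prequantum line $L^{2\omega}$, a weight computation shows that a $K$-type $V_\lambda$ can appear in the $U_\beta$-contribution only when $\beta = -i(\lambda+\rho)$; the $\rho$-shift originates from the half-sum of positive weights carried by the spinor bundle in the normal directions. The $U_{-i(\lambda+\rho)}$-contribution is then identified with $V_\lambda\otimes\QSpinc(M_{\lambda+\rho}, \omega_{\lambda+\rho})$, using that $Z_{-i(\lambda+\rho)}/K\cong M_{\lambda+\rho}$ carries an induced $\Spin^c$-structure whose determinant is $L^{2\omega_{\lambda+\rho}}$. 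Applying $R_K^\lambda$ to both sides gives the formula; and if $\lambda\in\Lambda_+\setminus i\Phi(M)$, no $\beta$ of the required form lies in $\B$, so the multiplicity vanishes.

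The hardest step is this local computation: accurately tracking weights, signs and the $\rho$-shift through the normal-form reduction, and handling the singular case where $-i(\lambda+\rho)$ is not a regular value of $\Phi$. In that case $M_{\lambda+\rho}$ is only an orbifold, and the right-hand side must be interpreted via Kawasaki's orbifold index theorem; showing that this agrees with the localised contribution requires a careful analysis of the local isotropy data. The abelian-stabilisers assumption is what keeps the normal-weight combinatorics tractable and ensures Kawasaki's formalism applies cleanly on $M_{\lambda+\rho}$.
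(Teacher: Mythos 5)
The paper does not prove this theorem: it is stated as Theorem~1.7 of Paradan's \cite{Paradan2} and is imported as a black box. The paper's own contribution is Theorem~\ref{GSss}, which is deduced \emph{from} Theorem~\ref{thm [Q,R]=0 cpt spin} via the quantisation-commutes-with-induction result (Theorem~\ref{thm [Q,I]=0}) and Lemma~\ref{lem Laff}; no independent proof of the compact statement is offered. There is therefore nothing in the paper to compare your argument against.

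That said, your sketch does track the broad strategy of Paradan's actual proof: Witten deformation of the $\Spin^c$ symbol by the Kirwan vector field, localisation of the transversally elliptic index to the critical set of $\|\Phi\|^2$, and a local weight computation on the pieces $Z_\beta$. Two cautions about the sketch itself. First, the claim that ``a $K$-type $V_\lambda$ can appear in the $U_\beta$-contribution only when $\beta=-i(\lambda+\rho)$'' overstates what is immediate; the real work in Paradan's argument is proving that the contributions of the other $\beta\in\B$ to the $V_\lambda$-multiplicity vanish, and this is a substantial analysis, not a direct weight count. Second, the abelian-stabilisers hypothesis is not merely a convenience that ``keeps the combinatorics tractable'': as the paper itself remarks after the theorem statement, without it the correspondence between irreducible $K$-types and coadjoint orbits via $\Spin^c$-quantisation is genuinely ambiguous, so the statement can actually fail; the hypothesis is what makes the reduced space $M_{\lambda+\rho}$ the uniquely correct one. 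If your goal was to prove something in this paper, the right target is Theorem~\ref{thm [Q,I]=0} and the naturality result Theorem~\ref{natKG}, not the cited compact-group theorem.
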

The condition that the action of $K$ on $M$ has abelian stabilisers is related to the fact that there may be several different coadjoint orbits in $\kk^*$ whose $\Spin^c$-quantisation equals a given irreducible representation of $K$. This ambiguity, which is not present in the case of Dolbeault-quantisation, can be removed by imposing the condition that the action has abelian stabilisers.

\subsection{Noncompact groups and manifolds}

Now suppose that $G$ is any Lie group, acting on a possibly noncompact symplectic manifold $(M, \omega)$, leaving $\omega$ invariant. The $\Spin^c$-quantisation of this action cannot be defined as in Subsection \ref{sec quant Spinc}, since the kernel of an elliptic operator on a noncompact manifold need not be finite-dimensional. Furthermore, the representation ring of a noncompact group is not well-defined.

Therefore, in \cite{Landsman} and \cite{HL}, it is proposed to define the quantisation of this action using the $K$-theory group $K_0(C^*G)$ of the $C^*$-algebra of $G$ instead of the representation ring,
and the \emph{analytic assembly map}
\[
 \mu_M^G: K_0^G(M) \to K_0(C^*G)
\]
instead of the equivariant index of elliptic operators (see \cite{BCH,Valette}). Here $K_0^G(M)$ is the equivariant $K$-homology group of $M$ (see \cite{HR}). The Dirac operator $\D_M^{L^{2\omega}}$ defines an element $\bigl[\D_M^{L^{2\omega}}\bigr]$ of this group, if the orbit space $M/G$ is \emph{compact}. We will assume compactness of $M/G$ throughout this paper. If both $M$ and $G$ are compact, then $K_0(C^*G) \cong R(G)$, and this isomorphism maps $\mu_M^G\bigl[\D_M^{L^{2\omega}}\bigr]$ to $\Gind \, \D_M^{L^{2\omega}}$.

In this paper, we will use the reduced $C^*$-algebra $C^*_rG$ of $G$ instead of the full one used in \cite{HL,Landsman}. In those papers, one considers reduction at the trivial representation, which is not tempered. Therefore, the reduction map used in \cite{HL,Landsman} is not well-defined on $K_0(C^*_rG)$. In this paper, we will consider discrete series representations of semisimple Lie groups. It follows from the fact that these representations are tempered, that the reduction map defined in Subsection \ref{Laff} \emph{is} well-defined on $K_0(C^*_rG)$.

With these replacements, we get
\begin{definition} \label{def quant VI}
 \[
    \QSpinc^G(M, \omega) := \mu_M^G\bigl[\D_M^{L^{2\omega}} \bigr] \quad \in K_0(C^*_rG).
 \]
\end{definition}

In the papers \cite{HL,Landsman}, a reduction map
\[
 R_G^0: K_0(C^*G) \to \Z
\]
is defined, which is used to state a `quantisation commutes with reduction'-conjecture. In these papers, as in most of the literature on the Guillemin--Sternberg conjecture, one does not use the $\Spin^c$-Dirac operator defined above, but
the Dolbeault--Dirac operator. Or  equivalently,  a $\Spin^c$-Dirac operator that acts on sections of the same vector bundle as the Dolbeault--Dirac opertor, and has the same principal symbol. For Lie groups $G$ with a normal discrete subgroup $\Gamma \lhd G$ such that $G/\Gamma$ is compact, we prove this generalised Guillemin--Sternberg conjecture in \cite{HL}.

In this paper, we will define reduction maps at discrete series representations of semisimple Lie groups, and deduce a `quantisation commutes with reduction'-result (Theorem \ref{GSss}) from the compact case, Theorem \ref{thm [Q,R]=0 cpt spin}. These reduction maps were first defined in V.\ Lafforgue's version of Atiyah \& Schid's \cite{ASch} and Parthasarathy's \cite{Parthasarathy} work, as explained in Subsection \ref{Laff}.

\subsection{Discrete series representations and $K$-theory} \label{Laff}

In \cite{Lafforgue}, V.\ Lafforgue reproves some classical results about discrete series representations by Harish-Chandra \cite{HC1,HC2},
Atiyah \& Schmid \cite{ASch} and Parthasarathy \cite{Parthasarathy}, using $K$-homology, $K$-theory and assembly maps. We will give a quick summary of the results in
\cite{Lafforgue} that we will use in this thesis.

For the remainder of this subsection,
let $G$ be a connected\footnote{Theorem \ref{GSss} and the other results in this paper (possibly in modified forms) are also valid for groups with finitely many connected components, but the assumption
that $G$ is connected allows us to circumvent some technical difficulties.} semisimple Lie group with finite centre. Let $K < G$ be a maximal compact subgroup, and let $T < K$ be a maximal torus.
Suppose that $T$ is also a Cartan subgroup of $G$, so that $G$ has discrete series
representations by Harish-Chandra's criterion \cite{HC2}. Discrete series representations\index{discrete series representation} are representations whose matrix elements are square-integrable over $G$. They form a discrete subset of the unitary dual of $G$.

In \cite{Parthasarathy}, Parthasarathy realises the irreducible discrete series representations of $G$ as the $L^2$-indices of Dirac operators ${\D}^V$,
where $V$
runs over the irreducible representations of $K$. Atiyah and Schmid do the same in \cite{ASch}, replacing Harish-Chandra's work by results from index theory.
In \cite{Slebarsky}, Slebarsky
considers the decomposition into irreducible representations of $G$ of $L^2$-indices of Dirac operators on any homogeneous space $G/L$, with $L<G$ a compact, connected
subgroup.

\subsubsection*{Dirac induction}

For a given irreducible representation $V$ of $K$, the Dirac operator ${\D}^V$ used by Parthasarathy and Atiyah--Schmid is defined as follows.
Let $\p \subset \g$\index{paad@$\p$} be the orthogonal complement to $\kk$ with respect to the Killing form. Then $\p$ is an $\Ad(K)$-invariant linear subspace
of $\g$, and $\g = \kk \oplus \p$. Consider the inner product on $\p$ given by the restriction of the Killing form.
The adjoint representation
\[
\Ad: K \to \GL(\p)
\]
of $K$ on $\p$ takes values in $\SO(\p)$, because the Killing form is $\Ad(K)$-invariant, and $K$ is connected. We suppose that it
has a lift $\widetilde{\Ad}$\index{aa@$\widetilde{\Ad}$} to the double cover $\Spin(\p)$ of $\SO(\p)$.
It may be necessary to replace $G$ and $K$ by double covers for this lift to exist. Then the homogeneous space
$G/K$ has a $G$-equivariant $\Spin$-structure
\[
P^{G/K} := G \times_K \Spin(\p) \to G/K.
\]
Here $G \times_K \Spin(\p)$ is the quotient of $G \times \Spin(\p)$ by the action of $K$ defined by
\[
 k(g,a) = (gk^{-1}, \widetilde{\Ad}(k)a),
\]
for $k \in K$, $g \in G$ and $a \in \Spin(\p)$.

Fix an orthonormal basis $\{X_{1}, \ldots, X_{d_{\p}}\}$ of $\p$. Using this basis, we identify $\Spin(\p) \cong \Spin(d_{\p})$.
Let $\Delta_{d_{\p}}$ be the
canonical $2^{\frac{d_{\p}}{2}}$-dimensional representation of $\Spin(d_{\p})$ (see Subsection \ref{sec quant Spinc}). Because $\p$ is even-dimensional, $\Delta_{d_{\p}}$ splits into two irreducible subrepresentations $\Delta_{d_{\p}}^+$ and $\Delta_{d_{\p}}^-$.
Consider the $G$-vector bundles
\[
E_V^{\pm} := G \times_{K} (\Delta_{d_{\p}}^{\pm} \otimes V) \to G/K.
\]
Note that
\begin{equation} \label{secEV}
\Gamma^{\infty}(G/K, E^{\pm}_V) \cong \bigl(C^{\infty}(G)\otimes \Delta_{d_{\p}}^{\pm} \otimes V  \bigr)^K,
\end{equation}
where $K$ acts on $C^{\infty}(G)\otimes \Delta_{d_{\p}}^{\pm} \otimes V$ by
\begin{equation}\label{Kact}
k\cdot (f \otimes \delta \otimes v) = (f \circ l_{k^{-1}} \otimes \widetilde{\Ad}(k) \delta \otimes k\cdot v)
\end{equation}
for all $k \in K$, $f \in C^{\infty}(G)$, $\delta \in \Delta_{d_{\p}}$ and $v \in V$. Here $l_{k^{-1}}$ denotes left multiplication by $k^{-1}$.

Using the basis $\{X_{1}, \ldots, X_{d_{\p}}\}$ of $\p$ and the isomorphism \eqref{secEV}, define the differential operator
\begin{equation} \label{eq dirac G/K}
{\D}^V: \Gamma^{\infty}(E_V^+) \to \Gamma^{\infty}(E_V^-)
\end{equation}
by the formula
\begin{equation} \label{DV}
{\D}^V := \sum_{j=1}^{d_{\p}} X_j \otimes c(X_j) \otimes 1_V.
\end{equation}
Here in the first factor, $X_j$ is viewed as a left invariant vector field on $G$, and in the second factor, $c: \p \to \End(\Delta_{d_{\p}})$
is the Clifford action (see Subsection \ref{sec quant Spinc}). This action is odd with respect to the grading on $\Delta_{d_{\p}}$.
The operator \eqref{eq dirac G/K} is the $\Spin$-Dirac operator on $G/K$ (see \cite{Parthasarathy}, Proposition 1.1 and \cite{Friedrich}, Chapter 3.5).

Lafforgue (see also Wassermann \cite{Wassermann}) uses the same operator to define a `Dirac induction map'\index{Dirac induction} \index{daaaaaaaaa@$\DInd_K^G$}
\begin{equation} \label{eq DInd}
\DInd_K^G: R(K) \to K_0(C^*_r(G))
\end{equation}
by
\begin{equation}\label{IGK}
\DInd_K^G[V] := \left[ \bigl(C^*_r(G) \otimes \Delta_{d_{\p}} \otimes V \bigr)^K, b \bigl({\D}^V \bigr) \right],
\end{equation}
where $b: \R \to \R$ is a normalising function, e.g.\ $b(x) = \frac{x}{\sqrt{1+x^2}}$. The expression on the right hand side defines a class in Kasparov's $\KK$-group
$\KK_0(\C, C^*_r(G))$, which is isomorphic to the $K$-theory group $K_0(C^*_r(G))$.
In \cite{Wassermann}, Wassermann proves the Connes--Kasparov conjecture, which states that this Dirac induction map is a bijection for linear reductive groups.

\subsubsection*{Reduction}

The relation between the Dirac induction map and the work of Atiyah \& Schmid and of Parthasarathy can be seen by embedding the discrete series of $G$ into $K_0(C^*_r(G))$ via the map
\[
\HH \mapsto [\HH] := [d_{\HH}  c_{\HH}],
\]
where $\HH$ is a Hilbert space with inner product $(\relbar, \relbar )_{\HH}$, equipped with a discrete series representation of $G$, $c_{\HH} \in C(G)$ is the function
\[
c_{\HH}(g) = (\xi, g\cdot \xi)_{\HH}
\]
(for a fixed $\xi  \in \HH$ of norm $1$), and $d_{\HH}$ is the inverse of the $L^2$-norm of $c_{\HH}$ (so that the function $d_{\HH} c_{\HH}$ has $L^2$-norm $1$).
Because $d_{\HH} c_{\HH}$ is a projection in $C^*_r(G)$, it indeed defines a class in
$K_0(C^*_r(G))$.

Next, Lafforgue defines a map\footnote{In Lafforgues's notation, $R_G^{\HH}(x) = \langle \HH, x\rangle$.}
\begin{equation} \label{RG} \index{quantum reduction!at discrete series representations} \index{raag@$R^{\HH}_G$}
R^{\HH}_G: K_0(C^*_r(G)) \to \Z
\end{equation}
that amounts to taking the multiplicity of the irreducible discrete series representation $\HH$, as follows. Consider the map
\[
C^*_r(G) \to \K(\HH)
\]
(the $C^*$-algebra of compact operators on $\HH$), given on $C_c(G) \subset C^*_r(G)$ by
\begin{equation} \label{RGH}
f \mapsto \int_G f(g) \, \pi(g) \, dg.
\end{equation}
Here $\pi$ is the representation of $G$ in $\HH$. Since $K_0(\K(\HH)) \cong \Z$, this map induces a map $K_0(C^*_r(G)) \to \Z$ on $K$-theory, which by definition is \eqref{RG}.

The map $R^{\HH}_G$ has the property that for all irreducible discrete series representations $\HH$ and $\HH'$ of $G$, one has
\[
R_G^{\HH}([\HH']) = \left\{ \begin{array}{cl} 1 & \text{if $\HH \cong \HH'$ } \\ 0 & \text{if $\HH \not\cong \HH'$.}\end{array} \right.
\]
Hence it can indeed be interpreted as a multiplicity function. For compact groups, it follows from Schur orthogonality that this is indeed the usual multiplicity.

\medskip
\noindent
Dirac induction links the reduction map $R_G^{\HH}$ to the usual reduction map defined by taking multiplicietis of a given representation in the following way.

Let $R = R(\g, \torus)$ be the root system\index{raab@$R$, $R_c$, $R_n$} of $(\g, \torus)$, let $R_c := R(\kk, \torus) \subset R$ be the subset of compact roots, and let $R_n := R \setminus R_c$ be the set of noncompact roots. Let $R_c^+ \subset R_c$\index{raac@$R^+$, $R_c^+$, $R_n^+$} be a choice of positive compact roots, and let $\Lambda^{\kk}_+$ be the set of dominant integral weights of $(\kk, \torus)$ with respect to $R_c^+$.

Let $\HH$ be an irreducible discrete series representation of $G$. Let $\lambda$ be the Harish-Chandra parameter\index{Harish-Chandra parameter} of $\HH$ (see \cite{HC1,HC2})
 such that $(\alpha, \lambda) > 0$ for all $\alpha \in R_c^+$. Here $(\relbar, \relbar)$ is a Weyl group invariant inner product on $\torus^*_{\C}$. Let $R^+ \subset R$ be the positive root system defined by
\[
\alpha \in R^+ \quad \Leftrightarrow \quad (\alpha, \lambda) > 0,
\]
for $\alpha \in R$. Then $R_c^+ \subset R^+$, and we denote by $R_n^+ := R^+ \setminus R_c^+$ the set of noncompact positive roots. We will write $\rho:= \frac{1}{2}\sum_{\alpha \in R^+} \alpha$\index{raad@$\rho$, $\rho_c$} and $\rho_c:= \frac{1}{2}\sum_{\alpha \in R_c^+} \alpha$. We will use the fact that $\lambda - \rho_c$ lies on the dominant weight lattice $\Lambda^{\kk}_+$, since $\lambda \in \Lambda^{\kk}_+ + \rho$.

Note
that the dimension of the quotient $G/K$ equals the number of noncompact roots, which is twice the number of positive noncompact roots, and hence even.
\begin{lemma} \label{lem Laff}
Let $\mu \in \Lambda^{\kk}_+$ be given. Let $V_{\mu}$ be the irreducible representation of $K$ with highest weight $\mu$.
 We have
\begin{equation} \label{eq red disc cpt}
R_G^{\HH}\bigl(\DInd_K^G [V_{\mu}]\bigr) = \left\{\begin{array}{cl} (-1)^{\frac{\dim G/K}{2}} & \text{if $\mu = \lambda - \rho_c$} \\
0 & \text{otherwise.} \end{array} \right.
\end{equation}
\end{lemma}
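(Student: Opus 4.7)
The strategy is to reduce the computation of $R_G^\HH\bigl(\DInd_K^G[V_\mu]\bigr)$ to a multiplicity inside the $L^2$-index of the Dirac operator $\D^{V_\mu}$, and then to invoke the Atiyah--Schmid--Parthasarathy realisation of irreducible discrete series as such $L^2$-indices. To carry out the reduction, I would unpack definitions: by \eqref{IGK}, $\DInd_K^G[V_\mu] \in K_0(C^*_r(G)) \cong \KK_0(\C, C^*_r(G))$ is represented by the Kasparov cycle $\bigl( (C^*_r(G) \otimes \Delta_{d_{\p}} \otimes V_\mu)^K, b(\D^{V_\mu}) \bigr)$, while $R_G^\HH$ is induced on $K$-theory by the $*$-homomorphism $C^*_r(G) \to \K(\HH)$ defined on $C_c(G)$ by \eqref{RGH}. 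Regarding $\HH$ as a $(C^*_r(G), \K(\HH))$-correspondence, the composite $R_G^\HH\bigl(\DInd_K^G[V_\mu]\bigr) \in K_0(\K(\HH)) \cong \Z$ is the interior Kasparov product of the Dirac cycle with $\HH$. Using the identification of sections of $E_{V_\mu}^{\pm}$ with $K$-invariants in $C^{\infty}(G) \otimes \Delta_{d_{\p}}^{\pm} \otimes V_\mu$ from \eqref{secEV}, this product reduces to the multiplicity of $\HH$ in the virtual $G$-representation
\[
\ker_{L^2} \D^{V_\mu, +} \ominus \ker_{L^2} \D^{V_\mu, -}.
\]

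Next I would invoke the theorem of Atiyah--Schmid \cite{ASch} and Parthasarathy \cite{Parthasarathy} (in the reformulation of \cite{Lafforgue}): for $\mu \in \Lambda^{\kk}_+$, the above virtual representation is isomorphic to $(-1)^{\dim G/K/2}$ times the irreducible discrete series representation $\HH_{\mu + \rho_c}$ with Harish-Chandra parameter $\mu + \rho_c$ whenever $\mu + \rho_c$ is regular (i.e.\ lies in the interior of some Weyl chamber of $R$ containing $R_c^+$), and vanishes if $\mu + \rho_c$ is singular. Since the Harish-Chandra parameter of the fixed $\HH$ is $\lambda$, and distinct discrete series representations are mutually inequivalent, the multiplicity of $\HH$ in the virtual representation above equals $(-1)^{\dim G/K/2}$ exactly when $\mu + \rho_c = \lambda$, i.e.\ when $\mu = \lambda - \rho_c$, and is zero in every other case. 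This yields \eqref{eq red disc cpt}.

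The main obstacle lies in aligning sign and positivity conventions across the three inputs: the chosen positive root system $R^+$ (with respect to which $\lambda$ is dominant), the orientation of $\p$ and the associated grading on $\Delta_{d_{\p}}$, and the Clifford action appearing in \eqref{DV}. These must be matched so that the Atiyah--Schmid sign appears in the uniform shape $(-1)^{\dim G/K/2}$, rather than in a $\mu$-dependent form sometimes encountered in the literature. Once that bookkeeping is done, the Kasparov-product identification in the first step is essentially formal, and the conclusion follows from Atiyah--Schmid combined with the orthogonality of discrete series matrix coefficients underlying \eqref{RGH}.
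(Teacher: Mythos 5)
Your proposal is mathematically sound but follows a genuinely different route from the paper, and the difference is worth noting. The paper does not invoke the Atiyah--Schmid $L^2$-index theorem at all: it quotes Lafforgue's Lemma 2.1.1, which gives directly (by a formal Kasparov-theoretic unwinding, not by an index-theoretic identification) the Frobenius-reciprocity-type formula
\[
R_G^{\HH}\bigl(\DInd_K^G[V_{\mu}]\bigr) \;=\; \dim\bigl(V_{\mu}^* \otimes \Delta_{d_{\p}}^* \otimes \HH\bigr)^K \;=\; \bigl[\Delta_{d_{\p}}^* \otimes \HH|_K : V_{\mu}\bigr],
\]
turning the problem into a branching computation for the \emph{restriction} $\HH|_K$. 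It then evaluates this multiplicity by multiplying Harish-Chandra's character formula for $\Theta_{\lambda}$ against Parthasarathy's formula for $\chi_{\Delta_{d_{\p}}}$, cancelling the noncompact-root factors, and recognising the Weyl character $(-1)^{\dim G/K/2}\chi_{\lambda - \rho_c}$. You instead identify the pairing with the multiplicity of $\HH$ in the $L^2$-index $\ker_{L^2}\D^{V_{\mu},+}\ominus\ker_{L^2}\D^{V_{\mu},-}$ and then invoke the Atiyah--Schmid theorem as a black box. Each route has its merits: yours is conceptually transparent and makes the role of the $L^2$-index theorem explicit, while the paper's route is lighter in analytic input, replacing the $L^2$-index theorem by a finite-dimensional character computation. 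But beware that your intermediate identification --- that the interior Kasparov product of the Dirac-induction cycle with the $(C^*_r(G), \K(\HH))$-correspondence computes the $L^2$-index multiplicity --- is not purely formal; it is essentially what Lafforgue's Lemma 2.1.1 buys you without further analysis, and if you bypass that lemma you must supply the (standard but nontrivial) comparison between the $C^*_r(G)$-index and the $L^2$-kernels. So your plan is correct, but it trades the character bookkeeping of the paper for heavier index-theoretic input plus a Kasparov-product identification you would still need to justify carefully.
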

The relation \eqref{eq red disc cpt} can be summarised as
\[
R_G^{\HH} \circ \DInd_K^G =  (-1)^{\frac{\dim G/K}{2}} R_K^{\lambda - \rho_c},
\]
with $R_K^{\lambda - \rho_c}: R(K) \to \Z$ given by taking multiplicities of the irreducible $K$-representation with highest weight $\lambda - \rho_c$.

\begin{proof}
According to Lafforgue \cite{Lafforgue}, Lemma 2.1.1, we have
\begin{align}
R_G^{\HH}\bigl(\DInd_K^G [V_{\mu}]\bigr) &= \dim\bigl(V^*_{\mu} \otimes \Delta_{d_{\p}}^* \otimes \HH \bigr)^K \nonumber \\
	&= \bigl[\Delta_{d_{\p}}^* \otimes \HH|_K : V_{\mu}\bigr], \label{eq mult}
\end{align}
the multiplicity of $V_{\mu}$ in $\Delta_{d_{\p}}^* \otimes \HH|_K$. Let us compute this multiplicity.

By Harish-Chandra's formula (Harish-Chandra \cite{HC2}, Schmid \cite{Schmid}, Theorem on page 95/96),
the character $\Theta_{\lambda}$ of $\HH$ is given by
\[
\Theta_{\lambda}|_{T^{\reg}} = (-1)^{\frac{\dim G/K}{2}}
\frac{\sum_{w \in W(\kk, \torus)} \varepsilon(w)e^{w\lambda}}{\prod_{\alpha \in R^+} \bigl(e^{\alpha/2} - e^{-\alpha/2}\bigr).}
\]
Here $\varepsilon(w) = \det(w)$, and $W(\kk, \torus)$ is the Weyl group of $(\kk, \torus)$.
The character $\chi_{\Delta_{d_{\p}}}$ of the representation
\begin{equation} \label{eq tilde Ad}
K \xrightarrow{\widetilde{\Ad}} \Spin(\p) \to \GL(\Delta_{d_{\p}}),
\end{equation}
on the other hand,
is given by (Parthasarathy \cite{Parthasarathy}, Remark 2.2)
\[
\chi_{\Delta_{d_{\p}}}|_{T^{\reg}} := \bigr(\chi_{\Delta_{d_{\p}}^+} - \chi_{\Delta_{d_{\p}}^-}\bigr)|_{T^{\reg}} = \prod_{\alpha \in R_n^+} \bigl(e^{\alpha/2} - e^{-\alpha/2}\bigr).
\]
It follows from this formula that for all $t \in T^{\reg}$,
\[
\chi_{\Delta_{d_{\p}}^*}(t) = \overline{\chi_{\Delta_{d_{\p}}}(t^{-1})} = \chi_{\Delta_{d_{\p}}}(t),
\]
and hence
\[
\begin{split}
\bigl(\Theta_{\lambda}\chi_{\Delta_{d_{\p}}^*}\bigr)|_{T^{\reg}} &=
(-1)^{\frac{\dim G/K}{2}}
\frac{\sum_{w \in W(\kk, \torus)} \varepsilon(w)e^{w\lambda}}{\prod_{\alpha \in R_c^+} \bigl(e^{\alpha/2} - e^{-\alpha/2}\bigr) } \\
 & = (-1)^{\frac{\dim G/K}{2}} \chi_{\lambda-\rho_c},
\end{split}
\]
by Weyl's character formula. Here $\chi_{\lambda-\rho_c}$ is the character of the irreducible representation of $K$ with highest weight $\lambda-\rho_c$.

Therefore, by \eqref{eq mult},
\[
\begin{split}
R_G^{\HH}\bigl(\DInd_K^G [V_{\mu}]\bigr) 	&= \bigl[\Delta_{d_{\p}}^* \otimes \HH|_K : V_{\mu}\bigr] \\
	&= (-1)^{\frac{\dim G/K}{2}} [V_{\lambda-\rho_c}: V_{\mu}] \\
	&= \left\{\begin{array}{cl} (-1)^{\frac{\dim G/K}{2}} & \text{if $\mu = \lambda - \rho_c$} \\
					0 & \text{otherwise.} \end{array} \right.
\end{split}
\]
\end{proof}

\begin{remark}
Lemma \ref{lem Laff} is strictly speaking not an orbit method, because the coadjoint orbit through $\mu$ is only equal to $G/K$ if $K=T$, and $\mu$ does not lie on any root hyperplanes.
\end{remark}

\subsection[${[Q,R]}=0$ for semisimple groups]{Quantisation commutes with reduction at discrete series representations of semisimple groups} \label{sec GSss}

Consider the situation of Definition \ref{def quant VI}, with the additional assumptions and notation of Subsection \ref{Laff}. Suppose that the action of $G$ on $M$ is Hamiltonian, with momentum map $\Phi$. We will state a generalisation of Theorem \ref{thm [Q,R]=0 cpt spin} in this setting, under the assumption that the image of $\Phi$ lies inside the \emph{strongly elliptic set} $\gse \subset \g^*$. We first clarify this assumption, and then state our result for semisimple groups.

\subsubsection*{The set $\gse$}

Let us define the subset $\gse \subset \g^*$ of \emph{strongly elliptic elements}.\index{strongly elliptic element}
We always view $\kk^*$ as a subspace of $\g^*$ via the linear isomorphism  $\kk^* \cong \p^0$
(via restriction from $\g$ to $\kk$), with $\p^0$ the annihilator of $\p$ in $\g^*$.
As before, the dual space $\torus^*$ is identified with the subspace $\bigl(\kk^*\bigr)^{\Ad^*(T)}$ of $\kk^*$.

Let $\torus^*_+ \subset \torus^*$ be a choice of positive Weyl chamber. We denote by `$\ncw$'\index{n@$\ncw$} the set of noncompact walls:
\begin{equation} \label{eq def ncw}
\ncw := \{\xi \in \torus^*; (\alpha, \xi) =0 \text{ for some $\alpha \in R_n$} \},
\end{equation}
where as before, $(\relbar, \relbar)$ is a Weyl group invariant inner product on $\torus^*_{\C}$. We then define\index{gaa@$\gse$}
\begin{equation} \label{eq def gse}
\gse := \Ad^*(G) (\torus_+^* \setminus \ncw).
\end{equation}
Equivalently, $\gse$ is the set of all elements of $\g^*$ with compact stabilisers under the coadjoint action, and also the interior of the elliptic set
$\g^*_{\mathrm{ell}}:= \Ad(G)\kk^*$. We will also use the notation
\begin{equation} \label{eq def kse}
\kse := \Ad^*(K) (\torus_+^* \setminus \ncw).
\end{equation}
Note that $\kse \subset \kk^*$ is an open dense subset, and that $\gse = \Ad^*(G) \kse$. The set $\gse$ is generally not dense in $\g^*$.

The reason for our assumption that the momentum map takes values in $\gse$ is that we are looking at multiplicities of discrete series representations. These can be seen as `quantisations' of certain coadjoint orbits that lie inside $\gse$ (see Schmid \cite{Schmid}, Parthasarathy \cite{Parthasarathy} and also Paradan \cite{Paradan2}). In general, the `quantisation commutes with reduction' principle implies that the quantisation of a Hamiltonian action decomposes into irreducible representations associated to coadjoint orbits that lie in the image of the momentum map. Hence if we suppose that this image lies inside $\gse$, we expect the quantisation of the action to decompose into discrete series representations.
In \cite{Weinstein}, Proposition 2.6, Weinstein proves that $\gse$ is nonempty if and only if $\rank G = \rank K$, which is Harish-Chandra's criterion for the existence of discrete series representations of $G$.

The most direct application of the assumption that the image of the momentum map lies in $\gse$ is the following lemma, which we will use several times.
\begin{lemma} \label{lem gxi p}
Let $\xi \in \gse$. Then $\g_{\xi} \cap \p = \{0\}$.
\end{lemma}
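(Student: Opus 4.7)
The plan is to exploit the characterisation of $\gse$ as the set of elements of $\g^*$ with \emph{compact} coadjoint stabilisers (stated immediately after equation (eq def gse)), and combine this with the global Cartan decomposition $G = K\exp(\p)$, which makes nonzero vectors in $\p$ generate noncompact one-parameter subgroups.

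Concretely, I would argue as follows. Suppose $X \in \g_\xi \cap \p$. Since $\g_\xi$ is the Lie algebra of the stabiliser $G_\xi$, we have $\exp(tX) \in G_\xi$ for every $t \in \R$. Because $\xi \in \gse$, the group $G_\xi$ is compact, so the set $\{\exp(tX) : t \in \R\}$ is precompact in $G$. Now invoke the Cartan (polar) decomposition: the map
\[
K \times \p \longrightarrow G, \qquad (k, Y) \longmapsto k \exp(Y),
\]
is a diffeomorphism (this is available because $G$ is connected semisimple with finite centre). In particular, the restriction $\exp \colon \p \to \exp(\p)$ is a diffeomorphism onto a closed submanifold of $G$, and a subset of $\exp(\p)$ is precompact if and only if its preimage in $\p$ is bounded. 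If $X \neq 0$ then $\{tX : t \in \R\}$ is unbounded in $\p$, hence $\{\exp(tX) : t \in \R\}$ is not precompact in $G$, contradicting the compactness of $G_\xi$. Therefore $X = 0$.

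The main obstacle, if any, is really just bookkeeping: one needs the characterisation of $\gse$ in terms of compact stabilisers (already noted in the text) and the polar decomposition in the form above; both are standard for connected semisimple $G$ with finite centre, which is exactly the hypothesis in force in Subsection \ref{Laff}. An alternative, more combinatorial proof is available by first using $\Ad^*(G)$-equivariance together with the observation that $\g_\xi$ is $\theta$-stable when $\xi \in \kk^*$ (since the Cartan involution fixes $\kk^*$ pointwise), and then computing $\g_\xi \cap \p$ via the root space decomposition of $\p_\C = \bigoplus_{\alpha \in R_n}\g_\alpha$: the condition $\xi \in \torus^*_+ \setminus \ncw$ is exactly $(\alpha, \xi) \neq 0$ for all $\alpha \in R_n$, which forces the $\p_\C$-part of $\g_{\xi,\C}$ to vanish. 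I would prefer the group-theoretic argument above, as it handles a general $\xi \in \gse$ without first conjugating into $\torus^*$ (which does not preserve the splitting $\g = \kk \oplus \p$).
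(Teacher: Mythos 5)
Your argument is essentially the paper's own: both note that $\exp(\R X) \subset G_\xi$ must have compact closure since $\xi \in \gse$, and both contrast this with the fact that $\exp\colon \p \to G$ is a closed embedding (the paper cites Knapp, Theorem 6.31c; you invoke the global Cartan decomposition $K \times \p \cong G$, which is the same fact), so $X \neq 0$ would make $\exp(\R X)$ an unbounded closed copy of $\R$, a contradiction. The reasoning and the key ingredient are identical; the alternative root-space argument you sketch at the end is not used in the paper.
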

\begin{proof}
Let $X \in \g_{\xi} \cap \p$ be given. We consider the one-parameter subgroup $\exp (\R X)$ of $G$. Because $\xi \in \gse$, the stabiliser $G_{\xi}$ is compact. Because
$\exp(\R X)$ is contained in $G_{\xi}$, it is therefore either the image of a closed curve, or dense in a subtorus of $G_{\xi}$. In both cases, its closure is compact.

On the other hand, the map $\exp: \p \to G$ is an embedding (see e.g.\ \cite{Knapp}, Theorem 6.31c). Hence, if $X\not=0$, then $\exp(\R X)$ is a closed subset of $G$,
diffeomorphic to $\R$. Because the closure of $\exp(\R X)$ is compact by the preceding argument, we conclude that $X=0$.
\end{proof}

Now suppose that $\Phi(M) \subset \gse$. Then the assumption that the action of $G$ on $M$ is proper is actually unnecessary:
\begin{lemma}
If $\Phi(M) \subset \gse$, then the action of $G$ on $M$ is automatically proper.
\end{lemma}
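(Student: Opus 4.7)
The plan is to reduce properness of the $G$-action on $M$ to properness of the coadjoint action of $G$ on $\gse$, using the $G$-equivariance of the momentum map $\Phi$.

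The central step is to show that $G$ acts properly on $\gse$. I would prove this by constructing a $G$-equivariant diffeomorphism
\[
\Psi \colon G \times_K \kse \xrightarrow{\;\cong\;} \gse, \qquad [g, \eta] \mapsto \Ad^*(g)\eta.
\]
Once $\Psi$ exists, properness is automatic, since $K$ is compact and hence the left $G$-action on any space of the form $G \times_K N$ is proper. Surjectivity of $\Psi$ follows from the definition $\gse = \Ad^*(G)(\torus_+^* \setminus \ncw)$ together with the inclusion $\torus_+^* \setminus \ncw \subset \kse$. Identifying $T_{[e,\eta]}(G \times_K \kse)$ with $\p \oplus \kk^*$ via the splitting $\g = \kk \oplus \p$, the differential of $\Psi$ at $[e, \eta]$ is
\[
(X, \zeta) \longmapsto \ad^*(X)\eta + \zeta \quad \in \p^* \oplus \kk^* = \g^*,
\]
noting that $\ad^*(X)\eta$ annihilates $\kk$ because $[\p, \kk] \subset \p \subset \ker \eta$. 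By Lemma \ref{lem gxi p}, the linear map $X \mapsto \ad^*(X)\eta$ is injective on $\p$, so the differential is an isomorphism by a dimension count, and $\Psi$ is a local diffeomorphism. A smooth bijection that is a local diffeomorphism is a diffeomorphism by the inverse function theorem.

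The main obstacle is injectivity of $\Psi$. Suppose $\Ad^*(g)\eta = \eta'$ with $\eta, \eta' \in \kse$. By the Cartan decomposition, write $g = k \exp(X)$ with $k \in K$ and $X \in \p$, so that $\Ad^*(\exp(X))\eta = \Ad^*(k^{-1})\eta' =: \eta'' \in \kk^*$. Applying the global Cartan involution $\theta$, which fixes $\kk^*$ pointwise and intertwines $\Ad^*(\exp(X))$ with $\Ad^*(\exp(-X))$ since $\theta(X) = -X$, yields $\Ad^*(\exp(-X))\eta = \eta''$ as well. Combining the two identities gives $\exp(2X) \in G_\eta$. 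Since $\eta \in \gse$, the stabiliser $G_\eta$ is compact; but $\exp|_\p$ is an embedding (see the proof of Lemma \ref{lem gxi p}), so if $X \neq 0$ then $\{\exp(2nX) : n \in \Z\}$ is a closed, infinite, discrete subgroup of $G$, which cannot lie in the compact $G_\eta$. Hence $X = 0$ and $g = k \in K$, with $\eta' = \Ad^*(k)\eta$.

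Given properness of $G \curvearrowright \gse$, the lemma follows by a standard sequence argument. If $m_n \to m$ in $M$ and $g_n m_n \to m'$, then by continuity and $G$-equivariance of $\Phi$ we have
\[
\Phi(m_n) \to \Phi(m), \qquad \Ad^*(g_n)\Phi(m_n) = \Phi(g_n m_n) \to \Phi(m'),
\]
with all limits in $\gse$. Properness of the coadjoint action on $\gse$ then produces a convergent subsequence of $\{g_n\}$, which is enough to conclude that the $G$-action on $M$ is proper.
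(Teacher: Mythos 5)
Your proof is correct and reaches the conclusion, but it takes a genuinely different route from the paper for the crucial ingredient. Both you and the paper reduce properness of $G \curvearrowright M$ to properness of the coadjoint action of $G$ on $\gse$ by pushing forward along the equivariant momentum map (the paper phrases this via compact subsets $G_C$, you phrase it via convergent sequences; these are equivalent for the locally compact second-countable spaces at hand). The difference is how properness of $G$ on $\gse$ is established: the paper simply cites Weinstein (Corollary 2.13 of \cite{Weinstein}), whereas you prove it from scratch by exhibiting a $G$-equivariant diffeomorphism $G \times_K \kse \xrightarrow{\cong} \gse$ and then invoking compactness of $K$. Your injectivity argument (Cartan decomposition plus the Cartan involution to force $\exp(2X) \in G_\eta$, then the embedding $\exp|_{\p}$ and compactness of $G_\eta$ to kill $X$) and your local-diffeomorphism computation (injectivity of $X \mapsto \ad^*(X)\eta$ on $\p$ via Lemma \ref{lem gxi p}, plus a dimension count) are both sound. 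Your approach is longer but self-contained and more illuminating: the slice-type structure $\gse \cong G \times_K \kse$ is exactly a noncompact analogue of the relation $M \cong G \times_K N$ established in Proposition \ref{prop Ind Res inv}, and makes visible \emph{why} the coadjoint action is proper rather than just asserting it. The paper's approach buys brevity by outsourcing that fact to Weinstein. One small point worth making explicit: you verify that the differential of $\Psi$ is an isomorphism only at points of the form $[e,\eta]$; you should note that $G$-equivariance of $\Psi$ propagates this to all of $G \times_K \kse$.
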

\begin{proof}
In \cite{Weinstein}, Corollary 2.13, it is shown that the coadjoint action of $G$ on $\gse$ is proper. This is a slightly stronger property than the fact that elements of
$\gse$ have compact stabilisers, and it implies properness of the action of $G$ on $M$.

Indeed, let a compact subset $C \subset M$ be given. It then follows from continuity and equivariance of $\Phi$, and from properness of the action of $G$ on $\gse$ that the closed
set
\[
\begin{split}
G_C &:= \{g \in G; gC \cap C \not= \emptyset\} \\
& \subset \{g \in G; g\Phi(C) \cap \Phi(C) \not= \emptyset\}
\end{split}
\]
is compact, i.e. the action of $G$ on $M$ is proper.
\end{proof}

\subsubsection*{The result}

Compactness of $M/G$ is enough to guarantee compactness of the reduced spaces
$M_{\xi} = \Phi^{-1}(\xi)/G_{\xi} \cong \Phi^{-1}(G \cdot \xi)/G$, but it can even be shown that in this
setting, $\Phi$ is a proper map. This gives another reason why the reduced spaces are compact.

We can finally state our result. Let $\HH$ be an irreducible discrete series representation. Let $\lambda \in i\torus^*$ be its Harish-Chandra parameter such that $(\alpha, \lambda) > 0$ for all $\alpha \in R_c^+$.
As before, we will write $(M_{\lambda}, \omega_{\lambda}) := (M_{-i\lambda}, \omega_{-i\lambda})$ for the symplectic reduction of $(M, \omega)$ at $-i\lambda \in \torus^*_+ \setminus \ncw \subset \gse$.
Then our generalisation of Theorem \ref{thm [Q,R]=0 cpt spin} is:
\begin{theorem}[Quantisation commutes with reduction at discrete series representations] \label{GSss} \index{quantisation commutes with reduction!at discrete series representations}
Consider the situation of Definition \ref{def quant VI}. Suppose that the action of $G$ on $M$ is proper and Hamiltonian, and that the additional assumptions of this subsection hold. Supose furthermore that the action of $G$ on $M$ has \emph{abelian} stabilisers. If $-i\lambda$ is a regular value of $\Phi$, then
\[
\boxed{
R^{\HH}_G\bigl(Q_{V\!I}(M, \omega)\bigr) :=
R^{\HH}_G \bigl( \mu_M^G \bigl[{\D}_M^{L^{2\omega}}\bigr] \bigr)
	= (-1)^{\frac{\dim G/K}{2}} Q_{I\! V}(M_{\lambda}, \omega_{\lambda}).}
\]
If $-i\lambda$ does not lie in the image of $\Phi$, then the integer on the left hand side equals zero.
\end{theorem}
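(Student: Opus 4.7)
The plan is to reduce Theorem \ref{GSss} to Paradan's compact result, Theorem \ref{thm [Q,R]=0 cpt spin}, by composing three ingredients: the Hamiltonian induction decomposition of $M$ coming from $\Phi(M) \subset \gse$, the quantisation-commutes-with-induction principle (Theorem \ref{thm [Q,I]=0}), and Lafforgue's exchange formula (Lemma \ref{lem Laff}). Once these are in hand, the deduction is a short chain of equalities.

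Since $\Phi^M(M) \subset \gse$, Section \ref{cpt <-> ncpt} yields a compact Hamiltonian $K$-manifold $(N, \nu) = \HCross_K^G(M, \omega)$ with momentum map $\Phi^N = \Phi^M|_N : N \to \kk^*$, such that $(M, \omega) = \HInd_K^G(N, \nu)$, and the prequantum and $\Spin^c$-data on $M$ are induced from those on $N$ (Section \ref{sec preqSpin}). Theorem \ref{thm [Q,I]=0} then gives
\[
\QSpinc^G(M, \omega) = \DInd_K^G \bigl( \QSpinc^K(N, \nu) \bigr) \quad \in K_0(C^*_r G),
\]
and applying $R_G^{\HH}$ together with Lemma \ref{lem Laff}, extended $\Z$-linearly to $R(K)$, produces
\[
R_G^{\HH}\bigl(\QSpinc^G(M, \omega)\bigr) = (-1)^{\frac{\dim G/K}{2}} R_K^{\lambda - \rho_c}\bigl( \QSpinc^K(N, \nu) \bigr).
\]
Next I would apply Theorem \ref{thm [Q,R]=0 cpt spin} to $N$ at the weight $\mu := \lambda - \rho_c$. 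The abelian-stabiliser hypothesis is inherited, since every $G$-stabiliser on $M = G \times_K N$ is a conjugate of a $K$-stabiliser on $N$; regularity of $-i\lambda$ as a value of $\Phi^N$ is equivalent to regularity as a value of $\Phi^M$; and the half-sum of positive roots of $(\kk,\torus)$ is exactly $\rho_c$. Hence Paradan's formula specialises to
\[
R_K^{\lambda - \rho_c} \bigl( \QSpinc^K(N, \nu) \bigr) = \QSpinc(N_\lambda, \nu_\lambda).
\]

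To finish I would identify $(N_\lambda, \nu_\lambda)$ with $(M_\lambda, \omega_\lambda)$. Since $-i\lambda \in \torus^*_+ \setminus \ncw \subset \kk^* \cap \gse$, Lemma \ref{lem gxi p} forces $G_{-i\lambda} \subset K$, hence $G_{-i\lambda} = K_{-i\lambda}$; and because $N = (\Phi^M)^{-1}(\kk^*)$, any point mapping to $-i\lambda \in \kk^*$ under $\Phi^M$ lies in $N$, so $(\Phi^M)^{-1}(-i\lambda) = (\Phi^N)^{-1}(-i\lambda)$. Quotienting by the common stabiliser identifies the two symplectic reductions, and the induced $\Spin^c$-structures match by the compatibility built into Section \ref{sec preqSpin}. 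The vanishing case is then automatic: if $-i\lambda \notin \Phi^M(M)$, the level set is empty, $M_\lambda = \emptyset$, and $\QSpinc(M_\lambda, \omega_\lambda) = 0$, so the entire chain collapses to zero. The genuine difficulty is outside this deduction: Theorem \ref{thm [Q,I]=0} itself, proved in Sections \ref{natincl}--\ref{Dirac} via a naturality statement for the assembly map under $K \hookrightarrow G$ together with its compatibility with the relevant Dirac $K$-homology classes, is the technical heart of the paper. Within the argument above the only delicate point is aligning the $\rho_c$-shift in Lemma \ref{lem Laff} with the $\rho$-shift in Paradan's theorem so that $(\lambda - \rho_c) + \rho_c = \lambda$ lands on the correct reduction level.
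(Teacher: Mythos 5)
Your proof follows the same route as the paper's: Hamiltonian cross-section to extract the compact $K$-manifold $N$, Theorem \ref{thm [Q,I]=0} to transfer $\QSpinc^G(M,\omega)$ to $\DInd_K^G(\QSpinc^K(N,\nu))$, Lemma \ref{lem Laff} to convert $R_G^{\HH}$ into $(-1)^{\dim G/K/2} R_K^{\lambda-\rho_c}$, Theorem \ref{thm [Q,R]=0 cpt spin} at weight $\lambda - \rho_c$, and the identification $N_\lambda = M_\lambda$ via $G_{-i\lambda} = K_{-i\lambda}$. You actually make explicit a few points the paper's proof leaves implicit (inheritance of the abelian-stabiliser hypothesis from $M$ to $N$, equivalence of regularity of $-i\lambda$ as a value of $\Phi^M$ and of $\Phi^N$), so the argument is complete and correct.
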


We use the compact version of quantisation to define the quantisation $Q_{I\! V}(M_{\lambda}, \omega_{\lambda})$ of the symplectic reduction, since this version is well-defined in the orbifold case.

If $G=K$, then the irreducible discrete series representation $\HH$ is the irreducible representation $V_{\lambda- \rho_c}$ of $K$ with highest weight $\lambda - \rho_c$ (see \cite{Schmid}, corollary on page 105). Hence $R^{\HH}_G$ amounts to taking the multiplicity of $V_{\lambda-\rho_c}$, as remarked after the definition of $R_G^{\HH}$. The assumption that $M/G$ is compact is now equivalent to compactness of $M$ itself.
Therefore  Theorem \ref{GSss} indeed reduces to Theorem \ref{thm [Q,R]=0 cpt spin} in this case. As mentioned before, our proof of Theorem \ref{GSss} is based on this statement for the compact case, so that we
cannot view Theorem \ref{thm [Q,R]=0 cpt spin} as a corollary to Theorem \ref{GSss}.

To obtain results about discrete series representations, we would like to apply Theorem \ref{GSss} to cases where $M$ is a coadjoint orbit of some semisimple group, such that the quantisation of this orbit in the sense of Definition \ref{def quant VI} is the $K$-theory class of a discrete series representation of this group. The condition that $M/G$ is compact rules out any interesting applications in this direction, however. If we could generalise Theorem \ref{GSss} to a similar statement where the assumption that $M/G$ is compact is replaced by the assumption that the momentum map is proper, then we might be able to deduce interesting corollaries in representation theory.

One such application could be analogous to unpublished work of Duflo and Vargas about restricting discrete series representations to semisimple subgroups. In this case, the assumption that the momentum map is proper corresponds to their assumption that the restriction map from some coadjoint orbit to the dual of the Lie algebra of such a subgroup is proper.

An interesting refinement of a special case of Duflo and Vargas's work was given by Paradan \cite{Paradan2}, who gives a multiplicity formula for the decomposition of the restriction of a discrete series representation of $G$ to $K$, in terms of symplectic reductions of the coadjoint orbit corresponding to this discrete series representation.

\section[Hamiltonian induction and cross-sections]{Induction and cross-sections of Hamiltonian group actions} \label{cpt <-> ncpt}

In this section, we explain the Hamiltonian induction and Hamiltonian cross-section constructions mentioned in the Introduction. We will see in Subsection \ref{sec inv} that they are each other's inverses.
Our term `Hamiltonian induction' is quite different from Guillemin and Sternberg's term `symplectic induction' introduced in \cite{GS2}, Section 40.

Many results in this section are known for the case where the pair $(G,K)$ is replaced by $(K,T)$. See for example \cite{Lerman,Paradan1}.

\subsection{The tangent bundle to a fibred product} \label{sec tangent}

In our study of the manifold $G \times_K N$, we will use an explicit description of its tangent bundle, which we will now explain.

For this subsection, let $G$ be any Lie group, $H<G$ any closed subgroup, and $N$ a left $H$-manifold. We consider the action of $H$ on the product $G \times N$ defined by
\[
h\cdot (g,n) = (gh^{-1}, hn),
\]
for all $h \in H$, $g \in G$ and $n \in N$. We denote the quotient of this action by $G \times_{H} N$, or by $M$. Because the action
of $H$ on $G \times N$ is proper and free, $M$ is a smooth manifold. We would like to describe the tangent bundle to $M$ explicitly.

To this end, we endow the tangent bundle $TH \cong H \times \h$ with the group structure
\[
(h, X)(h', X') := (hh', \Ad(h)X' + X),
\]
for $h, h' \in H$ and $X, X' \in \h$. This is a special case of the semidirect product
group structure on a product $V \rtimes H$, where $V$ is a representation space of $H$.
We consider the action of the group $TH$ on $TG \times TN$ defined by
\[
(h, X) \cdot (g, Y, v) := (gh^{-1}, \Ad(h)Y - X, T_n h(v) + X_{hn}),
\]
for $h \in H$, $X \in \h$, $(g, Y) \in G \times \g \cong TG$, $n \in N$ and $v \in T_nN$. Let $TG \times_{TH} TN$ be the quotient of this action. It is a vector bundle over $M$, with projection map $[g, X, v] \mapsto [g,n]$ (notation as above). We let $G$ act on $TG \times_{TH} TN$ by left multiplication on the first factor.
\begin{proposition}\label{TM}
There is a $G$-equivariant isomorphism of vector bundles
\[
\Psi: TG \times_{TH} TN \to TM,
\]
given by
\[
\Psi[g, Y, v] = Tp(g, Y, v),
\]
with $p: G \times N \to M$ the quotient map.
\end{proposition}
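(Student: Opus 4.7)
The map $\Psi$ is manifestly induced by the differential $Tp : T(G \times N) \to TM$ of the quotient map $p : G \times N \to M$. My plan is to verify that $Tp$ descends to the $TH$-quotient on the left, and that the resulting map is a $G$-equivariant fibrewise linear isomorphism.

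\textbf{Step 1: recognise the $TH$-action as a tangent lift.} The semidirect product structure on $TH \cong H \ltimes \h$ is exactly the Lie group structure carried by the tangent bundle of the Lie group $H$. A short calculation in the left trivialisation of $TG$ shows that the action in the statement is the tangent lift of the $H$-action $h \cdot (g,n) = (gh^{-1}, hn)$ on $G \times N$. Concretely, one decomposes $(h, X) = (e, X)(h, 0)$ and checks:
\begin{itemize}
\item the ``group part'' $(h,0)$ acts by $T(R_{h^{-1}}) \times T\alpha_h$, which in left trivialisation is $(g, Y, v) \mapsto (gh^{-1}, \Ad(h)Y, T_n h(v))$, using $(R_{h^{-1}})_\ast (L_g)_\ast Y = (L_{gh^{-1}})_\ast \Ad(h) Y$;
\item the ``Lie algebra part'' $(e,X)$ acts by translation by the value of the fundamental vector field of $X$ at the image point, contributing $-X$ in left trivialisation of $TG$ (since $\tfrac{d}{dt}\big|_0 g\exp(-tX) = -X$ in that trivialisation) and $X_{hn}$ on $TN$.
\end{itemize}

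\textbf{Step 2: well-definedness of $\Psi$.} Since $p \circ \phi_h = p$ for every $h \in H$, one has $Tp \circ T\phi_h = Tp$, handling the $(h,0)$-part. For the infinitesimal part, $p$ is constant along each $H$-orbit, hence $Tp$ annihilates every fundamental vector field $X_{(g,n)}^\ast = (-X, X_n)$ of the $H$-action. Combining, $Tp$ is $TH$-invariant and the prescription $\Psi[g, Y, v] := Tp(g, Y, v)$ descends to a well-defined map $TG \times_{TH} TN \to TM$ covering the identity on $M$.

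\textbf{Step 3: fibrewise linear isomorphism.} The fibre of $TG \times_{TH} TN$ over $[g,n]$ can be parametrised by fixing the base point, giving the identification
\[
(TG \times_{TH} TN)_{[g,n]} \cong (\g \oplus T_n N) / \{(-X, X_n) : X \in \h\}.
\]
This has dimension $\dim \g + \dim N - \dim \h = \dim M$, equal to $\dim T_{[g,n]} M$. On this fibre $\Psi$ is induced by $Tp|_{(g,n)}$, which is surjective (because $p$ is a submersion) with kernel precisely the tangent space to the $H$-orbit through $(g,n)$, i.e.\ exactly the subspace $\{(-X, X_n) : X \in \h\}$ we quotient by. Hence $\Psi$ is a linear bijection on each fibre, and so a vector bundle isomorphism.

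\textbf{Step 4: smoothness and $G$-equivariance.} Smoothness of $\Psi$ is automatic, since $p$ is a principal $H$-bundle and consequently the quotient $TG \times TN \to TG \times_{TH} TN$ is a smooth submersion through which the smooth map $Tp$ factors. Finally, $p$ intertwines the left $G$-action on the first factor with the $G$-action on $M$, so its differential intertwines the corresponding actions on $TG \times TN$ and $TM$; passing to the quotient, $\Psi$ is $G$-equivariant.

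\emph{Potential obstacle.} There is no serious difficulty here, only bookkeeping: the only point that needs real care is keeping the left trivialisations of $TG$ and $TH$ straight in Step 1, so that the explicit algebraic formula for the $TH$-action is correctly identified with the tangent lift of the $H$-action. Once this matching is in place, the well-definedness, linearity, and equivariance statements are essentially formal.
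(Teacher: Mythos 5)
The paper states Proposition \ref{TM} without proof and immediately passes to Lemma \ref{TM2}, so there is no in-paper argument to compare your proposal against; the proposition is evidently regarded as routine. Your verification is correct and supplies the missing argument in a natural way. The logical core is Step 2 (that $Tp$ kills the fundamental vector fields of the $H$-action and is invariant under $T\phi_h$, hence $TH$-invariant) together with Step 3 (that $\ker T_{(g,n)}p$ is exactly the tangent space to the $H$-orbit, so the induced fibre map is a linear bijection by dimension count); Step 1 is motivational rather than load-bearing, and Step 4 is the standard smoothness/equivariance bookkeeping for quotients of free proper actions.

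One small remark on Step 1 that does not affect correctness: the group law $(h,X)(h',X') = (hh',\Ad(h)X' + X)$ is the one obtained from the \emph{right} trivialisation of $TH$, not the left one you use for $TG$. Your decomposition $(h,X)=(e,X)(h,0)$ and the identification of the two pieces still go through, since the decomposition is intrinsic to the semidirect product, but it is worth noting that the two tangent bundles are being trivialised in opposite ways. If you prefer a cleaner narrative you could drop the tangent-lift recognition entirely and go straight to the two invariance checks of Step 2, which are all that is actually used.
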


Now suppose that there is an $\Ad(H)$-invariant linear subspace $\p \subset \g$ such that $\g = \h \oplus \p$
(such as in the case $H = K$ we consider in the rest of this paper). Then there is a possibly simpler description of $TM$, that we will also use later.
 Consider the action of $H$ on the product $G \times TN \times \p$ given by
\[
h\cdot (g, v, Y) = (gh^{-1}, T_nh(v), \Ad(h)Y),
\]
and denote the quotient by $G \times_{H} (TN \times \p)$.
\begin{lemma} \label{TM2}
The map
\[
\Xi: TG \times_{TH} TN \to G \times_{H} (TN \times \p),
\]
given by
\[
\Xi[g,Y,v] = [g,v + (Y_{\h})_n,Y_{\p}]
\]
for all $g \in G$, $Y \in \g$, $n \in N$ and $v \in T_nN$,
is a well-defined, $G$-equivariant isomorphism of vector bundles. Here $Y_{\h}$ and $Y_{\p}$ are the components of $Y$ in $\h$ and $\p$ respectively,
according to the decomposition $\g = \h \oplus \p$.
\end{lemma}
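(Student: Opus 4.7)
The plan is to verify in turn that $\Xi$ is well-defined on the $TH$-quotient, is $G$-equivariant, and is a vector bundle isomorphism by producing the inverse explicitly. Throughout, the key ingredient will be the $\Ad(H)$-invariance of the decomposition $\g = \h \oplus \p$, together with the elementary identity $(\Ad(h) Z)_{hn} = T_n h(Z_n)$ for the fundamental vector field of any $Z \in \h$, which is the infinitesimal form of $H$-equivariance of the action on $N$.

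For well-definedness I would compute $\Xi[(h, X) \cdot (g, Y, v)]$ for $(h, X) \in TH$ and compare it to $\Xi[g, Y, v]$. Since $X \in \h$ and $\p$ is $\Ad(H)$-invariant, the $\h$- and $\p$-components of $\Ad(h) Y - X$ are $\Ad(h)(Y_\h) - X$ and $\Ad(h)(Y_\p)$ respectively. Using the fundamental vector field identity above to rewrite $(\Ad(h)(Y_\h))_{hn}$ as $T_n h((Y_\h)_n)$, the middle coordinate simplifies, with the two $X_{hn}$ terms cancelling, to $T_n h(v + (Y_\h)_n)$. The resulting triple $[gh^{-1}, T_n h(v + (Y_\h)_n), \Ad(h)(Y_\p)]$ is exactly the image of $[g, v + (Y_\h)_n, Y_\p] = \Xi[g, Y, v]$ under the $H$-action defining $G \times_H (TN \times \p)$, so the two classes agree.

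To establish bijectivity I would define the inverse by $\Xi^{-1}[g, w, Z] := [g, Z, w]$, with $Z \in \p$ regarded as an element of $\g$ via the inclusion $\p \hookrightarrow \g$. One composition is immediate: $\Xi \circ \Xi^{-1}[g, w, Z] = [g, w + 0, Z]$, since $Z_\h = 0$ and $Z_\p = Z$. The other composition yields $\Xi^{-1} \circ \Xi[g, Y, v] = [g, Y_\p, v + (Y_\h)_n]$, and acting on this by the element $(e, -Y_\h) \in TH$ recovers $[g, Y, v]$. Smoothness and fiberwise linearity of both maps are apparent from their algebraic form, so $\Xi$ is a vector bundle isomorphism. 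Finally, $G$-equivariance is trivial because $G$ acts by left multiplication on the first factor in both bundles and the formula for $\Xi$ leaves that factor untouched.

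I expect the only subtle step to be the well-definedness calculation: the semidirect product structure on $TH$ carries a ``correction'' by $X_{hn}$ that must be seen to match, under the splitting $\g = \h \oplus \p$, the simpler pushforward action defining $G \times_H (TN \times \p)$, and it is precisely the fundamental vector field identity that effects this match.
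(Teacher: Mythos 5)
The paper states Lemma~\ref{TM2} without proof, so there is no argument in the text to compare against; your direct verification is the natural one and is correct. You correctly identify the two pivot points: that the $\Ad(H)$-invariance of the splitting $\g = \h \oplus \p$ makes $Y \mapsto (Y_{\h}, Y_{\p})$ commute with the action of $\Ad(h)$, and that the fundamental-vector-field identity $(\Ad(h)Z)_{hn} = T_n h(Z_n)$ for $Z \in \h$ is exactly what turns $(\Ad(h)Y_{\h} - X)_{hn}$ into $T_n h((Y_{\h})_n) - X_{hn}$, so that the $X_{hn}$ coming from the $TH$-action and the $-X_{hn}$ coming from the split-off $\h$-part cancel. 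That is the only nontrivial computation, and you carry it out correctly.

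One small omission worth noting: you introduce $\Xi^{-1}[g,w,Z] := [g,Z,w]$ and check the two compositions at the representative level, but you do not verify that this formula actually descends to a map on $G \times_{H} (TN \times \p)$. The surjectivity part of your argument does not need that, but the injectivity of $\Xi$ (equivalently, that $\Xi^{-1}$ is a genuine two-sided inverse on classes) does. The check is one line: for $h \in H$ one has $(h, 0) \cdot (g, Z, w) = (gh^{-1}, \Ad(h)Z, T_n h(w))$ in $TG \times TN$, which is precisely the image of $(g, w, Z)$ under $h \in H$ acting on $G \times TN \times \p$, so $[gh^{-1}, \Ad(h)Z, T_n h(w)] = [g, Z, w]$ in $TG \times_{TH} TN$. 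With that line added, the proof is complete. The remarks about $G$-equivariance (the $G$-action is left multiplication on the $G$-factor, which $\Xi$ leaves untouched), smoothness, fibrewise linearity, and the fact that $\Xi$ covers the identity on $G \times_H N$ are all routine as you say.
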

Because of Proposition \ref{TM} and Lemma \ref{TM2}, we have $TM \cong G \times_{H} (TN \times \p)$ as $G$-vector
bundles.\footnote{A version of this fact is used without a proof in \cite{AS} on page 503.}

In Section \ref{sec preqSpin}, we will use the following version of Proposition \ref{TM} and Lemma \ref{TM2}.
\begin{corollary} \label{cor TM}
In the situation of Lemma \ref{TM2}, there is an isomorphism of $G$-vector bundles
\[
TM \cong \bigl(p_{G/H}^*T(G/H)\bigr) \oplus (G \times_H TN),
\]
where $p_{G/H}: M \to G/H$ is the natural projection.
\end{corollary}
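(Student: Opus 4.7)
The plan is to combine Proposition~\ref{TM} and Lemma~\ref{TM2} with two routine manipulations of associated bundles: splitting off an $H$-trivial factor and recognising it as a pullback of the homogeneous bundle $T(G/H)$.

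By Proposition~\ref{TM} and Lemma~\ref{TM2} I have the $G$-equivariant identification $TM \cong G \times_H (TN \times \p)$. The first step is to view $TN \times \p$ as the total space of the $H$-equivariant vector bundle $TN \oplus (N \times \p)$ over $N$, where $H$ acts on $N \times \p$ by $h\cdot(n, Y) = (hn, \Ad(h) Y)$ and on $TN$ by the derivative of its action on $N$. Since the functor $G \times_H (-)$ from $H$-equivariant vector bundles over $N$ to $G$-equivariant vector bundles over $M$ is additive, this gives
\[
TM \cong G \times_H (TN \oplus (N \times \p)) \cong (G \times_H TN) \oplus \bigl(G \times_H (N \times \p)\bigr)
\]
as $G$-equivariant vector bundles over $M$.

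The second step is to identify $G \times_H (N \times \p)$ with $p_{G/H}^* T(G/H)$. Recall the standard identification $T(G/H) \cong G \times_H \p$, where $\p$ carries the $\Ad(H)$-action. The pullback $p_{G/H}^* T(G/H)$ has fibre over $[g,n] \in M$ given by $T_{gH}(G/H)$, so one can write its total space as
\[
\bigl\{([g,n], [g',Y]) \in M \times (G \times_H \p) : gH = g'H\bigr\}.
\]
The map $[g,n,Y] \mapsto ([g,n],[g,Y])$ from $G \times_H (N \times \p)$ to this total space is well-defined (replacing $(g,n,Y)$ by $(gh^{-1}, hn, \Ad(h)Y)$ changes neither $[g,n]$ nor $[g,Y]$), $G$-equivariant, fibrewise linear, and fibrewise bijective, hence an isomorphism of $G$-vector bundles over $M$.

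Combining these two identifications gives the desired isomorphism
\[
TM \cong \bigl(p_{G/H}^* T(G/H)\bigr) \oplus (G \times_H TN).
\]
No step here is a genuine obstacle; the only point that requires any care is checking well-definedness of the maps under the diagonal $H$-action, which is immediate from the explicit formulas.
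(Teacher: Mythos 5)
Your proof is correct and takes essentially the same route as the paper's: the paper also deduces the statement from Proposition~\ref{TM}, Lemma~\ref{TM2}, and the identification $T(G/H)\cong G\times_H\p$, simply without spelling out the split of $G\times_H(TN\times\p)$ and the explicit pullback isomorphism that you write down.
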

\begin{proof}
The claim follows from  Proposition \ref{TM}, Lemma \ref{TM2}, and the fact that
\[
T(G/H) \cong G \times_H \p,
\]
where $H$ acts on $\p$ via $\Ad$.
\end{proof}

\subsection{Hamiltonian induction} \label{sec ind}

We return to the standard situation in this paper, where $G$ is a semisimple group, and $K<G$ is a maximal compact subgroup.

\subsubsection*{The symplectic manifold}

Let $(N, \nu)$ be a symplectic manifold on which $K$ acts in Hamiltonian fashion, with momentum map $\Phi^N: N \to \kk^*$.
Suppose that the image of $\Phi^N$ lies in the set $\kse$, defined in \eqref{eq def kse}.
As in Subsection \ref{sec tangent}, we consider the fibred product $M = G \times_K N$, equipped with the action of $G$ induced by left multiplication on the first factor. As a consequence of Proposition \ref{TM} and Lemma \ref{TM2}, we have for all $n \in N$,
\[
T_{[e,n]}M \cong   T_nN \oplus \p.
\]
We define a two-form $\omega$ on $M$ by requiring that it is $G$-invariant, and that for all $X,Y \in \p$, $n \in N$ and $v, w \in T_nN$,
\begin{equation} \label{eq omega}
\omega_{[e,n]}(v+X,w+ Y) := \nu_n(v,w) - \langle \Phi^N(n), [X,Y]\rangle.
\end{equation}
Note that $[X,Y] \in \kk$ for all $X,Y \in \p$, so the pairing in the second term is well-defined. We claim that $\omega$ is a symplectic form. This is analogous to formula (7.4) from \cite{Paradan1}.
\begin{proposition} \label{prop omega sympl}
The form $\omega$ is symplectic.
\end{proposition}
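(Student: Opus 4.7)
The plan is to check that $\omega$ extends to a well-defined, non-degenerate, closed $G$-invariant 2-form on $M$. Well-definedness and non-degeneracy are quick; closedness is the main obstacle.

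\textbf{Well-definedness.} Since $\omega$ is declared $G$-invariant, the only ambiguity to address is the relabelling $[e,n] = [k, k^{-1}n]$ for $k \in K$. The two ensuing identifications of this tangent space with $T_nN \oplus \p$ and $T_{k^{-1}n}N \oplus \p$ are intertwined by $(v, X) \leftrightarrow (T_n k^{-1} v, \Ad(k^{-1}) X)$, and $K$-invariance of $\nu$, $K$-equivariance of $\Phi^N$, and $\Ad$-invariance of the Lie bracket together make the right-hand side of \eqref{eq omega} invariant under this relabelling.

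\textbf{Non-degeneracy.} Suppose $v + X \in T_nN \oplus \p$ lies in the kernel of $\omega_{[e,n]}$. Pairing with $(w, 0)$ and using non-degeneracy of $\nu$ forces $v = 0$. Pairing with $(0, Y)$ for $Y \in \p$ gives $\langle \Phi^N(n), [X, Y]\rangle = 0$. For $Z \in \kk$ one has $[X, Z] \in \p$ while $\Phi^N(n) \in \kk^* = \p^0$, so this identity extends to $Z \in \kk$; hence $X \in \g_{\Phi^N(n)} \cap \p$. Since $\Phi^N(n) \in \kse \subset \gse$, Lemma \ref{lem gxi p} forces $X = 0$.

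\textbf{Closedness.} The plan is a minimal-coupling construction on the principal $K$-bundle $G \to G/K$. Let $\theta \in \Omega^1(G; \kk)$ be the connection 1-form $\theta_g(v) := (g^{-1} v)_{\kk}$ associated to the reductive decomposition $\g = \kk \oplus \p$; $\Ad(K)$-invariance of $\p$ yields the standard right-$K$-equivariance of $\theta$. Define
\[
\tau_{(g,n)}(Y, v) := \langle \Phi^N(n), \theta_g(Y)\rangle
\]
on $G \times N$ and set $\tilde\omega := \pi_N^*\nu + d\tau$, which is manifestly closed. The next step is to verify that $\tilde\omega$ is $K$-basic for the diagonal action $k\cdot(g,n) = (gk^{-1}, kn)$: $K$-invariance combines the equivariance of $\theta$ and of $\Phi^N$, while the horizontal identity $\iota_{Z^{G\times N}}\tilde\omega = 0$ for $Z \in \kk$ uses the fact that $\theta$ sends the fundamental vector field of $Z$ to $Z$, together with the moment-map property $\iota_{Z^N}\nu = d\langle \Phi^N, Z\rangle$, via Cartan's formula. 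Then $\tilde\omega$ descends to a closed 2-form on $M$, and a direct evaluation at $(e, n)$ on pairs from $T_nN \oplus \p$, using $\theta_e|_{\p} = 0$ and the Maurer--Cartan-type identity $d\theta(X_1, X_2)|_e = -[X_1, X_2]_{\kk}$ for $X_i \in \p$, recovers \eqref{eq omega} exactly. The main obstacle is the basic-form bookkeeping (and a careful sign audit for $d\tau$); once that is done, closedness of the descended form is automatic.
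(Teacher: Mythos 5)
Your proof is correct. The paper states Proposition \ref{prop omega sympl} without proof, only remarking that it is analogous to formula (7.4) of \cite{Paradan1}, so there is no in-text argument to compare against; your minimal-coupling construction on the principal $K$-bundle $G \to G/K$ is the natural route and is almost certainly what is intended. Two points worth tightening. First, after descending $\tilde\omega$ to $M$, you need the descended form to be $G$-invariant in order to conclude it agrees with $\omega$ \emph{everywhere} (not only on the slice $\{[e,n]: n\in N\}$); this holds because $\theta_g(Y) = (g^{-1}Y)_{\kk}$ is a left-invariant $\kk$-valued form on $G$, so $\tau$ and hence $\tilde\omega$ are invariant under the left $G$-action on the first factor, which is exactly the action that descends to the $G$-action on $M$. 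Second, the sign audit resolves in your favour: Lemma \ref{lem psi} (which asserts $\omega_m(X_m, Y_m) = -\langle\xi, [X,Y]\rangle$ for $\xi = \Phi^M(m)$) pins down the paper's momentum map convention as $\iota_{X_M}\omega + d\langle\Phi,X\rangle = 0$; with this convention and your $\tau$ as written, the horizontality check gives $\iota_{Z^{G\times N}}\tilde\omega = \pi_N^*(\iota_{Z^N}\nu) + d\langle\Phi^N,Z\rangle = 0$, and the evaluation at $(e,n)$ using $d\theta_e(X_1,X_2) = -[X_1,X_2]_{\kk}$ reproduces \eqref{eq omega} with the correct minus sign.
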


\subsubsection*{The momentum map}

Next, consider the map
$\Phi^M: M \to \g^*$
given by
\begin{equation} \label{eq Phi}
\Phi^M[g,n] = \Ad^*(g) \Phi^N(n).
\end{equation}
This map is well-defined by $K$-equivariance of $\Phi^N$. Furthermore, it is obviously $G$-equivariant, and its image lies in $\gse$.
\begin{proposition} \label{prop Phi mom}
The map $\Phi^M$ is a momentum map for the action of $G$ on $M$.
\end{proposition}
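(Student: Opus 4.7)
The proof amounts to verifying two things: $G$-equivariance of $\Phi^M$, and the infinitesimal relation $d\langle \Phi^M, Y\rangle = \pm \iota_{Y_M}\omega$ for every $Y \in \g$. Equivariance is a direct manipulation of the definition: $\Phi^M(g\cdot[g',n]) = \Ad^*(gg')\Phi^N(n) = \Ad^*(g)\Phi^M[g',n]$. Since $\omega$ is $G$-invariant by construction and $\Phi^M$ is $G$-equivariant, the infinitesimal relation is $G$-invariant, so it suffices to verify it at a single point of each $G$-orbit, and I will check it at points of the form $[e,n]$.

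The plan is to trivialise $T_{[e,n]}M$ via Lemma \ref{TM2}, which (for $H = K$) gives $T_{[e,n]}M \cong T_nN \oplus \p$, a tangent vector $v + X$ in that decomposition being realised by the curve $t \mapsto [\exp(tX), n(t)]$ with $n'(0) = v$. First I would compute the fundamental vector field: writing $Y = Y_\kk + Y_\p \in \kk \oplus \p$ and using the identity $[\exp(tY_\p)\exp(tY_\kk), n] = [\exp(tY_\p), \exp(tY_\kk)n]$ afforded by the $K$-equivalence, one gets $Y_M([e,n]) \leftrightarrow (Y_\kk)_N(n) + Y_\p$ under the isomorphism above (modulo $O(t^2)$ errors from Baker--Campbell--Hausdorff, which vanish on differentiation).

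Next I would compute both sides of the momentum map identity at $[e,n]$ applied to $v + X$. Differentiating $\langle \Phi^M[\exp(tX), n(t)], Y\rangle = \langle \Phi^N(n(t)), \Ad(\exp(-tX))Y\rangle$ gives
\[
\langle d\Phi^M_{[e,n]}(v+X), Y\rangle = \langle d\Phi^N_n(v), Y_\kk\rangle - \langle \Phi^N(n), [X, Y]\rangle,
\]
where I used that $\Phi^N$ is $\kk^*$-valued and $\kk^* \hookrightarrow \g^*$ is the annihilator of $\p$, so only the $\kk$-component of $Y$ pairs with $d\Phi^N(v)$. Since $[\kk, \p] \subset \p$ and $[\p,\p] \subset \kk$, and $\Phi^N(n)$ annihilates $\p$, the bracket $[X, Y]$ contributes only through $[X, Y_\p] \in \kk$, so the second term collapses to $-\langle \Phi^N(n), [X, Y_\p]\rangle$. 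On the other side, the defining formula \eqref{eq omega} directly gives
\[
\omega_{[e,n]}(v + X, (Y_\kk)_N(n) + Y_\p) = \nu_n(v, (Y_\kk)_N(n)) - \langle \Phi^N(n), [X, Y_\p]\rangle.
\]
The two expressions now match term by term once the momentum map identity for $(N, \nu, \Phi^N)$, namely $\langle d\Phi^N_n(v), Y_\kk\rangle = \nu_n(v, (Y_\kk)_N(n))$, is inserted.

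The only potential pitfall is bookkeeping: correctly identifying the fundamental vector field on $M$ in the decomposition $T_nN \oplus \p$, keeping signs in the Lie bracket straight, and being consistent with the sign convention used for the $K$-momentum map $\Phi^N$. Once the decompositions $\g = \kk \oplus \p$, $\kk^* = \p^0 \subset \g^*$ and $[\p, \p] \subset \kk$ are exploited so that the cross-term $[X, Y_\kk] \in \p$ drops out of the pairing with $\Phi^N(n)$, everything reduces to the momentum map property of $\Phi^N$.
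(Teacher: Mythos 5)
Your argument is correct. The paper states Proposition \ref{prop Phi mom} without proof (presumably regarding it as a routine verification, as it also does for Propositions \ref{TM}, \ref{TM2} and \ref{prop omega sympl}), so there is no competing argument to compare against; what you have written is the natural proof. The key bookkeeping steps all check out: the identification of $Y_M([e,n])$ with $(Y_\kk)_N(n) + Y_\p$ under the isomorphism $T_{[e,n]}M \cong T_nN \oplus \p$ of Lemma \ref{TM2} (via the curve $[\exp(tY_\p), \exp(tY_\kk)n]$), the use of $\kk^* = \p^0$ to kill both the $Y_\p$-contribution to $\langle d\Phi^N_n(v), Y\rangle$ and the $[X, Y_\kk] \in \p$ part of the bracket term, and the reduction to the $K$-momentum map identity $\langle d\Phi^N_n(v), Y_\kk\rangle = \nu_n(v, (Y_\kk)_N(n))$. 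You are right to flag sign conventions as the one place where care is needed, and your computation is internally consistent: whichever of the two standard conventions one fixes for $\Phi^N$ propagates unchanged to $\Phi^M$.
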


\begin{definition} \index{Hamiltonian induction}
The \emph{Hamiltonian induction}\index{Hamiltonian induction} of the Hamiltonian action of $K$ on $(N, \nu)$ is the Hamiltonian action of $G$ on $(M, \omega)$:\index{haa@$\HInd_K^G$}
\[
\HInd_K^G(N, \nu, \Phi^N) := (M, \omega, \Phi^M).
\]
\end{definition}
\begin{example}
Let $\xi \in \torus^* \setminus \ncw$ be given, and consider the coadjoint orbit $N := K \cdot \xi \subset \kk^*$. The Hamiltonian induction of the coadjoint action of $K$ on $N$ is the coadjoint action of $G$ on the coadjoint orbit $M := G \cdot \xi$, including the natural symplectic forms and momentum maps. Indeed, the map
\[
 G \cdot \xi \to G \times_K N
\]
given by $g\cdot \xi \mapsto [g, \xi]$ is a symplectomorphism.
\end{example}

\subsection{Hamiltonian cross-sections} \label{sec res}

We now turn to the inverse construction to Hamiltonian induction, namely the \emph{Hamiltonian cross-section}.
In this case, we start with a Hamiltonian $G$-manifold $(M, \omega)$, with momentum map $\Phi^M$.
Such a cross-section will indeed be symplectic and carry a Hamiltonian $K$-action, under the assumption that the image of $\Phi^M$ is contained in $\gse$.
A Hamiltonian cross-section is a kind of double restriction: it is both a restriction to a subgroup of $G$ and a restriction to a submanifold of $M$.

Most of this subsection is based on the proof of the symplectic cross-section theorem in Lerman et al.\ \cite{Lerman}. We will therefore omit most proofs.

As before, we identify $\kk^*$ with the subspace $\p^0$ of $\g^*$.
The main result of this subsection is:
\begin{proposition} \label{prop Hamres}
If $\Phi^M(M) \subset \gse$, then $N:= \bigl(\Phi^M\bigr)^{-1}(\kk^*)$ is a $K$-invariant symplectic submanifold  of $M$, and $\Phi^N:= \Phi^M|_N$ is a momentum map for the action of $K$ on $N$.
\end{proposition}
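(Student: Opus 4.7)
The plan is to carry out a standard symplectic cross-section argument, with the role of compact stabilisers played by the assumption $\Phi^M(M)\subset\gse$ and its consequence Lemma~\ref{lem gxi p}. I will argue in four stages: $K$-invariance, submanifold property, non-degeneracy of $\omega|_N$, and the momentum map identity.

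\textbf{$K$-invariance and submanifold structure.} Since $\kk^*\cong \p^0\subset\g^*$ is $\Ad^*(K)$-invariant and $\Phi^M$ is $G$-equivariant, $N$ is automatically $K$-invariant. To show $N$ is an embedded submanifold I will verify that $\Phi^M$ is transverse to $\kk^*$ at every $n\in N$. This reduces, via the splitting $\g^*=\kk^*\oplus\p^*$ (with $\p^*:=\kk^0$), to showing that the composition of $d\Phi^M_n$ with projection to $\p^*$ is surjective. By the momentum map relation
\[
\langle d\Phi^M_n(v),X\rangle=\omega_n(X\cdot n,v),\qquad X\in\g,\ v\in T_nM,
\]
the annihilator in $\g$ of $\mathrm{image}(d\Phi^M_n)$ is exactly the infinitesimal stabiliser $\g_n$, and $\g_n\subset\g_{\Phi^M(n)}$. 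Since $\Phi^M(n)\in\gse\cap\kk^*$, Lemma~\ref{lem gxi p} combined with the fact that $\mathrm{coad}(\p)\kk^*\subset\p^*$ and $\mathrm{coad}(\kk)\kk^*\subset\kk^*$ forces $\g_{\Phi^M(n)}\subset\kk$; hence the annihilator of $\mathrm{image}(d\Phi^M_n)$ lies in $\kk$, so $\mathrm{image}(d\Phi^M_n)\supset\kk^0=\p^*$ and transversality holds.

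\textbf{Symplecticity.} The key identity is
\[
T_nN=(\p\cdot n)^{\omega},
\]
which I will obtain as follows. A vector $v\in T_nM$ lies in $T_nN$ iff $d\Phi^M_n(v)\in\kk^*$, iff $\langle d\Phi^M_n(v),X\rangle=0$ for every $X\in\p$, iff $\omega_n(X\cdot n,v)=0$ for every $X\in\p$, by the momentum map relation. Taking symplectic orthogonals and using $(W^\omega)^\omega=W$ in finite dimensions gives $T_nN^\omega=\p\cdot n$. Non-degeneracy of $\omega|_{TN}$ is therefore equivalent to $T_nN\cap\p\cdot n=\{0\}$. If $X\in\p$ with $X\cdot n\in T_nN$, then on one hand $d\Phi^M_n(X\cdot n)=-\mathrm{coad}(X)\Phi^M(n)\in\p^*$ (because $X\in\p$ and $\Phi^M(n)\in\kk^*$), and on the other hand it lies in $\kk^*$. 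Hence $X\in\g_{\Phi^M(n)}\cap\p$, which is $\{0\}$ by Lemma~\ref{lem gxi p}; the same lemma shows $X\cdot n=0$ iff $X=0$, so the intersection is trivial. Dimensionally this also yields the splitting $T_nM=T_nN\oplus\p\cdot n$, confirming symplecticity.

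\textbf{Momentum map.} For $X\in\kk$ and $v\in T_nN$, $\Phi^N=\Phi^M|_N$ and $d\Phi^N_n(v)=d\Phi^M_n(v)\in\kk^*$. Pairing with $X\in\kk$ gives
\[
\langle d\Phi^N_n(v),X\rangle=\langle d\Phi^M_n(v),X\rangle=\omega_n(X\cdot n,v),
\]
and since $\kk^*$ is $K$-invariant, $X\cdot n\in T_nN$ for $X\in\kk$. Combined with $K$-equivariance of $\Phi^N$ inherited from $G$-equivariance of $\Phi^M$, this is precisely the momentum map property for the $K$-action on $(N,\omega|_N)$.

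\textbf{Main obstacle.} The only nontrivial input throughout is the cleanness of the decomposition $\g=\kk\oplus\p$ under the coadjoint action along $\kk^*$, and the rigidity statement $\g_\xi\cap\p=\{0\}$ from Lemma~\ref{lem gxi p}; everything else is formal. I expect the symplectic step to be where one must be most careful, since it is the place where Lemma~\ref{lem gxi p} is used twice—once to rule out $\p\cdot n$ inside $T_nN$ and implicitly once again to guarantee $\dim\p\cdot n=\dim\p$ so that the dimension count closes.
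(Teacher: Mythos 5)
Your proof is correct and reaches the same conclusion, but the internal organisation differs from the paper's in a way worth noting. The paper follows the template of Lerman et al.'s symplectic cross-section argument: it establishes transversality via the auxiliary Lemma~\ref{lem incl} ($\g_\eta^0 \subset T_n\Phi^M(T_nM) \subset \g_n^0$), then shows $T_nN$ and $\p\cdot n$ span $T_nM$ and are symplectically orthogonal, and finally invokes Lemma~\ref{lem pm} (the symplecticity of $\p\cdot n$, itself proved via Lemmas~\ref{lem psi} and~\ref{lem sympl subsp}) to force $T_nN\cap\p\cdot n=\{0\}$. You instead organise everything around the single identity $T_nN = (\p\cdot n)^{\omega}$, read off directly from the momentum map relation, and then use $(W^\omega)^\omega=W$ to reduce nondegeneracy to $T_nN\cap\p\cdot n=\{0\}$. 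Your argument for this intersection is also more direct than the paper's: rather than appealing to the symplecticity of $\p\cdot n$, you use equivariance of $\Phi^M$ to observe that $d\Phi^M_n(X\cdot n)=\mathrm{ad}^*(X)\Phi^M(n)$ lies in $\p^*$ for $X\in\p$, hence in $\p^*\cap\kk^*=\{0\}$, and then Lemma~\ref{lem gxi p} finishes. This bypasses Lemmas~\ref{lem psi}, \ref{lem pm}, and~\ref{lem sympl subsp} entirely and makes the dependence on $\g_\xi\cap\p=\{0\}$ cleaner to see. The paper's route has the virtue of mirroring the general symplectic cross-section theorem and setting up Lemmas~\ref{lem psi} and~\ref{lem pm}, which are reused later (Proposition~\ref{prop Ind Res inv}); yours is tighter if one only wants Proposition~\ref{prop Hamres} itself. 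Your transversality argument also differs slightly in packaging: you show the stronger statement $\g_{\Phi^M(n)}\subset\kk$ by splitting $X=X_\kk+X_\p$ and using $\mathrm{coad}(\kk)\kk^*\subset\kk^*$, $\mathrm{coad}(\p)\kk^*\subset\p^*$ together with Lemma~\ref{lem gxi p}, whereas the paper uses the identity $\p^0+\g_\eta^0=(\p\cap\g_\eta)^0=\g^*$ directly; both are valid.
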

We denote the restricted symplectic form $\omega|_N$ by $\nu$.
\begin{definition} \index{Hamiltonian cross-section}
The \emph{Hamiltonian cross-section}\index{Hamiltonian cross-section} of the Hamiltonian action of $G$ on $(M, \omega)$ is the Hamiltonian action of $K$ on $(N, \nu)$: \index{haa@$\HCross_K^G$}
\[
\HCross^G_K(M, \omega, \Phi^M) := (N, \nu, \Phi^N).
\]
\end{definition}
In Proposition \ref{prop Ind Res inv}, we will see that $M \cong G \times_K N$, so that $M/G$ is compact if and only if $N$ is.

To prove Proposition \ref{prop Hamres}, we have to show that $N$ is a smooth submanifold of $M$, and that the restricted form $\omega|_N$ is symplectic. Then the submanifold $N$ is $K$-invariant by $K$-equivariance of $\Phi^M$, and the fact that $\Phi^N$ is a momentum map is easily verified. We begin with some preparatory lemmas, based on the proof of the symplectic cross-section theorem mentioned above.

For the remainder of this subsection, let $m \in M$ be given, and write $\xi := \Phi^M(m)$.
\begin{lemma} \label{lem psi}
The linear map
\[
\psi: T_m (G \cdot m) \to T_{\xi}(G \cdot \xi)
\]
given by
\[
\psi(X_m) = X_{\xi}
\]
for $X \in \g$, is symplectic, in the sense that for all $X, Y \in \g$,
\[
\omega_m(X_m, Y_m) = -\langle \xi, [X, Y]\rangle.
\]
\end{lemma}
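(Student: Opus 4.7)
The plan is to establish the identity first and derive well-definedness as a by-product. I would start by observing that the fundamental vector field assignment $X \mapsto X_m$ factors through $\g/\g_m$, where $\g_m$ is the Lie algebra of the stabiliser $G_m$, and similarly $X \mapsto X_\xi$ factors through $\g/\g_\xi$. To see that $\psi$ is well defined it therefore suffices to show $\g_m \subset \g_\xi$, which is immediate from $G$-equivariance of $\Phi^M$: if $\exp(tX) \cdot m = m$ for all $t$, then $\Ad^*(\exp(tX))\xi = \Phi^M(\exp(tX)\cdot m) = \xi$, so $X \in \g_\xi$.

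Next, for the symplectic statement, I would plug both vectors into the defining property of the momentum map, namely $\iota_{X_M}\omega = d\langle \Phi^M, X \rangle$. Applying this at $m$ to the tangent vector $Y_m$ gives
\[
\omega_m(X_m, Y_m) = d\langle \Phi^M, X \rangle_m (Y_m) = \ddt \langle \Phi^M(\exp(tY)\cdot m), X\rangle.
\]
Using equivariance of $\Phi^M$ this becomes $\ddt \langle \Ad^*(\exp(tY))\xi, X\rangle$, and differentiating the coadjoint action at the identity produces the pairing $-\langle \xi, [X,Y] \rangle$ (with the sign convention used throughout the paper, in which the fundamental vector field is the infinitesimal generator of the left action). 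This gives the desired formula.

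I do not expect any serious obstacle here: the argument is a direct computation from the momentum map condition and the definition of the coadjoint action, plus an elementary stabiliser inclusion for well-definedness. The only subtle point is keeping track of the sign conventions for the fundamental vector field (either $\ddt \exp(tX)\cdot m$ or its negative) and the coadjoint action; these must be coordinated so that the final sign matches the statement $-\langle \xi,[X,Y]\rangle$, which is the normalisation implicit in the formula \eqref{eq omega} used in Subsection~\ref{sec ind}. Note that the formula also confirms $\psi$ is symplectic in the usual sense, since the right-hand side $-\langle \xi, [X,Y]\rangle$ is (up to sign) the Kirillov--Kostant--Souriau form on $G\cdot \xi$ evaluated on $(X_\xi, Y_\xi)$, so $\omega_m$ restricted to $T_m(G\cdot m)$ agrees with the pullback of the KKS form under $\psi$.
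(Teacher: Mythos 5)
The paper does not actually prove Lemma \ref{lem psi}; Subsection~\ref{sec res} explicitly defers most proofs, including this one, to the symplectic cross-section theorem in Lerman--Meinrenken--Tolman--Woodward, so there is no paper proof to compare against. Your approach --- apply the defining momentum-map identity to $Y_m$ and use $G$-equivariance of $\Phi^M$ to differentiate the coadjoint action --- is the standard one and is exactly what one would expect that omitted proof to be. The well-definedness observation ($\g_m \subset \g_\xi$ via equivariance) is also correct, though it is worth noting that for the displayed identity itself well-definedness is not needed, since both sides are written directly in terms of $X,Y \in \g$.

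The one real issue is the sign in the final step, and your parenthetical hand-wave does not resolve it. With the convention you state, $\iota_{X_M}\omega = d\langle\Phi^M,X\rangle$, together with $X_m = \ddt\exp(tX)\cdot m$ and the standard left coadjoint action $\Ad^*(g)\xi = \xi\circ\Ad(g^{-1})$, the computation gives
\[
\omega_m(X_m, Y_m) = \ddt\bigl\langle \Ad^*(\exp tY)\xi, X\bigr\rangle
= \bigl\langle \xi, -[Y,X]\bigr\rangle = +\langle\xi,[X,Y]\rangle,
\]
i.e.\ the opposite sign from what the lemma asserts. To land on $-\langle\xi,[X,Y]\rangle$ you need the opposite momentum-map sign convention $\iota_{X_M}\omega = -\,d\langle\Phi^M,X\rangle$ (or equivalently flip the fundamental vector field), and this is in fact the convention the paper must be using, since it makes the formula consistent with the definition \eqref{eq omega} of the induced form and with the way Lemma \ref{lem psi} is invoked at the end of the proof of Proposition~\ref{prop Ind Res inv}. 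So the conclusion and method are sound, but the written derivation carries a sign error: you should either replace the stated momentum-map identity by its negative, or trace through why your claimed differentiation of $\Ad^*$ yields $-\langle\xi,[X,Y]\rangle$, because with the conventions as written it does not.
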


\begin{lemma} \label{lem incl}
We have the following inclusions of subspaces of $\g^*$:
\[
\g_{\xi}^0 \subset T_m\Phi^M(T_mM) \subset \g_m^0.
\]
\end{lemma}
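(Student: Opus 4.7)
The plan is to establish the two inclusions separately, each being a direct consequence of a basic property of the equivariant moment map. I do not expect any serious obstacle, as neither inclusion uses the hypothesis $\Phi^M(M) \subset \gse$; that hypothesis will enter later, when we deduce from this lemma that $N$ is a symplectic submanifold (combining these inclusions with $\g = \g_{\xi} \oplus \p$ when $\xi \in \gse$, since $\g_{\xi} \subset \kk$ by Lemma \ref{lem gxi p}).

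For the inclusion $T_m\Phi^M(T_mM) \subset \g_m^0$, I would invoke the defining property of the momentum map. For any $X \in \g$, one has $d\langle \Phi^M, X\rangle = \iota_{X_M}\omega$. If $X \in \g_m$, then the fundamental vector field $X_M$ vanishes at $m$, so $\iota_{X_M}\omega_m = 0$, hence $d\langle \Phi^M, X\rangle_m = 0$. Unpacking this, $\langle T_m\Phi^M(v), X\rangle = 0$ for every $v \in T_mM$, which is precisely the assertion that $T_m\Phi^M(T_mM)$ lies in the annihilator $\g_m^0$.

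For the inclusion $\g_{\xi}^0 \subset T_m\Phi^M(T_mM)$, the key point is the standard identification $T_{\xi}(G \cdot \xi) = \g_{\xi}^0$. Indeed, the tangent space $T_{\xi}(G \cdot \xi)$ is the image of $\g \to \g^*$, $X \mapsto X_{\xi} = -\ad^*(X)\xi$, whose kernel is exactly $\g_{\xi}$; hence its image has codimension $\dim \g_{\xi}$ and is contained in $\g_{\xi}^0$ (since $\langle X_{\xi}, Z\rangle = \langle \xi, [Z,X]\rangle = 0$ for $Z \in \g_{\xi}$), so equality holds by dimension. By $G$-equivariance of $\Phi^M$, the restriction of $\Phi^M$ to the orbit $G \cdot m$ takes values in $G \cdot \xi$, and its derivative at $m$ is the map $\psi$ of Lemma \ref{lem psi}, sending $X_m$ to $X_{\xi}$. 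Since every element of $T_{\xi}(G \cdot \xi)$ has the form $X_{\xi}$ for some $X \in \g$, the map $\psi: T_m(G\cdot m) \to T_{\xi}(G \cdot \xi)$ is surjective. Composing with the inclusion $T_m(G \cdot m) \hookrightarrow T_mM$ yields
\[
\g_{\xi}^0 \;=\; T_{\xi}(G \cdot \xi) \;=\; \psi\bigl(T_m(G \cdot m)\bigr) \;\subset\; T_m\Phi^M(T_mM),
\]
which completes the proof.
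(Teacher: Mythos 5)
Your proof is correct. The paper does not spell out a proof of this lemma (it declares that most proofs in this subsection are omitted, being based on the symplectic cross-section theorem of Lerman et al.), but your argument is exactly the standard one: the inclusion $T_m\Phi^M(T_mM) \subset \g_m^0$ comes from the momentum map identity $d\Phi^M_X = \iota_{X_M}\omega$ evaluated at $m$ with $X \in \g_m$, and the inclusion $\g_\xi^0 \subset T_m\Phi^M(T_mM)$ comes from equivariance, which forces $T_m\Phi^M$ restricted to $T_m(G\cdot m)$ to coincide with the map $\psi$ of Lemma \ref{lem psi}, whose image is all of $T_\xi(G\cdot\xi) = \g_\xi^0$.
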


\begin{lemma} \label{lem pm}
If $m \in N \subset M$, then the subspace
\[
\p \cdot m := \{X_m; X \in \p\} \subset T_mM
\]
is symplectic.
\end{lemma}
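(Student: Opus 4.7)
The plan is to reduce the nondegeneracy of $\omega_m$ on $\p \cdot m$ to the nondegeneracy of a bilinear form on $\p$ itself, using the two previous lemmas.

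First I would use Lemma \ref{lem psi} to get a concrete formula for $\omega_m$ restricted to $\p \cdot m$. Since $\p \cdot m \subset T_m(G\cdot m)$, Lemma \ref{lem psi} gives
\[
\omega_m(X_m, Y_m) = -\langle \xi, [X,Y]\rangle
\]
for all $X,Y \in \p$, where $\xi := \Phi^M(m) \in \kk^*$. So the whole question is controlled by the alternating form $B_\xi(X,Y) := -\langle \xi, [X,Y]\rangle$ on $\p$.

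Next, I would check that the map $\p \to \p \cdot m$, $X \mapsto X_m$, is actually a \emph{linear isomorphism}, so that nondegeneracy of $B_\xi$ on $\p$ is equivalent to nondegeneracy of $\omega_m$ on $\p \cdot m$. The kernel of this map is $\g_m \cap \p$. Since $\Phi^M$ is $G$-equivariant we have $\g_m \subset \g_\xi$, and since $\xi \in \Phi^M(M) \subset \gse$ Lemma \ref{lem gxi p} gives $\g_\xi \cap \p = \{0\}$, so the kernel is trivial.

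Finally, and this is the main content, I would prove that $B_\xi$ is nondegenerate on $\p$. Suppose $X \in \p$ satisfies $\langle \xi, [X,Y]\rangle = 0$ for all $Y \in \p$, i.e.\ $\ad^*(X)\xi$ vanishes on $\p$. Using the Cartan relations $[\p,\p]\subset \kk$ and $[\p,\kk]\subset \p$, together with $\xi \in \kk^* = \p^0$, one checks directly that $\ad^*(X)\xi$ also vanishes on $\kk$: indeed $\langle \ad^*(X)\xi, Z\rangle = \pm \langle \xi, [X,Z]\rangle$ lies in $\langle \xi, \p\rangle = 0$ for $Z \in \kk$. Hence $\ad^*(X)\xi = 0$, so $X \in \g_\xi \cap \p$, and Lemma \ref{lem gxi p} forces $X = 0$.

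The main obstacle is really the last step, and it is where the strongly elliptic hypothesis enters crucially (twice): once to give $\p \cong \p \cdot m$, and once to get $B_\xi$ nondegenerate. The rest is a direct translation via Lemma \ref{lem psi}.
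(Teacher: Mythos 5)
Your proof is correct, and it takes a genuinely streamlined route compared to the paper. The paper works inside the coadjoint orbit $G\cdot\xi$: it first decomposes $T_\xi(G\cdot\xi) = T_\xi(K\cdot\xi) + \p\cdot\xi$, shows the two pieces are symplectically orthogonal, and then invokes Lemma \ref{lem sympl subsp} (together with the nondegeneracy of the Kirillov--Kostant--Souriau form on the orbit) to conclude $\p\cdot\xi$ is symplectic; only in a third step does it transfer back to $\p\cdot m$ using Lemmas \ref{lem psi} and \ref{lem gxi p}. You bypass Step~1, Step~2, and Lemma \ref{lem sympl subsp} entirely by proving nondegeneracy of $B_\xi(X,Y) = -\langle\xi,[X,Y]\rangle$ on $\p$ directly: the observation that $\ad^*(X)\xi$ automatically annihilates $\kk$ (via $[\p,\kk]\subset\p$ and $\xi\in\p^0$) is exactly the same pairing computation the paper uses in its symplectic-orthogonality step, but you deploy it differently, upgrading ``$\ad^*(X)\xi$ vanishes on $\p$'' to ``$\ad^*(X)\xi = 0$'' and finishing with Lemma \ref{lem gxi p}. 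What your approach buys is that it never needs to invoke the symplectic structure of the coadjoint orbit or the auxiliary Lemma \ref{lem sympl subsp}; what the paper's approach buys is that the orthogonal decomposition $T_\xi(G\cdot\xi) = T_\xi(K\cdot\xi)\oplus\p\cdot\xi$ is itself of independent use (it reappears in the proof of Proposition \ref{prop Hamres}). One small remark: the injectivity of $X\mapsto X_m$ on $\p$, which you establish separately via $\g_m\subset\g_\xi$, actually falls out automatically once you know $B_\xi$ is nondegenerate, since a degenerate pullback would contradict that; so that step is correct but logically redundant.
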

\begin{proof}
\emph{Step 1:} we have
\[
T_{\xi}(G \cdot \xi) \cong \g \cdot \xi = (\kk + \p)\cdot \xi = T_{\xi}(K \cdot \xi) + \p \cdot \xi.
\]

\emph{Step 2:} the subspace $\p \cdot \xi \subset T_{\xi}(G\cdot \xi)$ is symplectic.

\noindent
Indeed, by Step 1 and Lemma \ref{lem sympl subsp} below, it is enough to prove that $\p \cdot \xi$ and $T_{\xi}(K\cdot \xi)$ are symplectically orthogonal. Let $X \in \kk$ and $Y \in \p$ be given. Because $m \in N$, we have $\xi \in \kk^*$, and also $\ad^*(X)\xi \in \kk^* \cong \p^0$. Hence
\[
\langle \xi, [X,Y] \rangle = -\langle \ad^*(X) \xi, Y\rangle = 0.
\]

\emph{Step 3:} the subspace $\p \cdot m \subset T_mM$ is symplectic.

\noindent
Indeed, let a nonzero  $X \in \p$ be given. We are looking for a $Y \in \p$ such that $\omega_m(X_m, Y_m)\not=0$.
Note that by Lemma \ref{lem gxi p}, we have $\ad^*(X)\xi = X_{\xi} \not=0$. So by Step 2, there is a $Y \in \p$ for which $\langle \xi, [X,Y]\rangle \not= 0$. Hence by Lemma \ref{lem psi},
\[
\omega_m(X_m, Y_m) = -\langle \xi, [X,Y]\rangle \not=0.
\]
\end{proof}
In Step 2 of the proof of Lemma \ref{lem pm}, we used
\begin{lemma} \label{lem sympl subsp}
Let $(W, \sigma)$ be a symplectic vector space, and let $U,V \subset W$ be linear subspaces. Suppose that $W = U + V$, and that
$U$ and $V$ are symplectically orthogonal. Then $U$ and $V$ are \emph{symplectic} subspaces.
\end{lemma}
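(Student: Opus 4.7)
The plan is to use the standard criterion that a subspace $U \subset W$ of a symplectic vector space is symplectic if and only if $U \cap U^{\sigma} = \{0\}$, where $U^{\sigma}$ denotes the symplectic orthogonal. The hypotheses give us everything we need almost immediately: the assumption that $U$ and $V$ are symplectically orthogonal means $V \subset U^{\sigma}$, and together with $W = U + V$ this will force any vector in $U \cap U^{\sigma}$ to pair trivially with all of $W$, hence to be zero by nondegeneracy of $\sigma$.

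In detail, I would take any $u \in U \cap U^{\sigma}$ and any $w \in W$, then decompose $w = u' + v$ with $u' \in U$ and $v \in V$. Since $u \in U^{\sigma}$, we have $\sigma(u, u') = 0$, and since $u \in U$ and $U, V$ are symplectically orthogonal, we have $\sigma(u, v) = 0$. Therefore $\sigma(u, w) = 0$ for every $w \in W$, so nondegeneracy of $\sigma$ on $W$ gives $u = 0$. This proves $U \cap U^{\sigma} = \{0\}$, i.e., $\sigma|_U$ is nondegenerate, so $U$ is symplectic. The argument for $V$ is identical by symmetry, since $U \subset V^{\sigma}$ and $W = V + U$.

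There is no real obstacle here; the only thing to watch is that the hypothesis $W = U + V$ is a sum, not necessarily direct, but this does not affect the argument since the decomposition of $w$ need not be unique for the pairing computation to give zero. It is worth remarking that under the conclusion of the lemma, the sum is in fact automatically direct: if $x \in U \cap V$, then $x \in V \subset U^{\sigma}$, so $x \in U \cap U^{\sigma} = \{0\}$. Thus $W = U \oplus V$ as a symplectic direct sum, which is the form in which the result is typically used in Step 2 of the proof of Lemma \ref{lem pm}.
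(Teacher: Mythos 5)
Your proof is correct. The paper does not actually supply a proof of this lemma (it is stated without proof as an auxiliary fact used in Step 2 of the proof of Lemma \ref{lem pm}), so there is nothing to compare against; your argument via the criterion $U \cap U^{\sigma} = \{0\}$, decomposing an arbitrary $w = u' + v$ and using symplectic orthogonality plus nondegeneracy on $W$, is the standard and natural route, and the remark that the sum is then automatically direct is a useful observation.
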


After these preparations, we are ready to prove Proposition \ref{prop Hamres}.

\medskip
\noindent \emph{Proof of Proposition \ref{prop Hamres}.}
We first show that $N$ is smooth. This is true if $\Phi^M$ satisfies the transversality condition that for all $n \in N$, with $\eta := \Phi^M(n)$, we have
\[
T_{\eta} \g^* = T_{\eta}\kk^* + T_n\Phi^M(T_nM).
\]
(See e.g.\ \cite{Hirsch}, Chapter 1, Theorem 3.3.)
 By Lemma \ref{lem incl}, we have $\g_{\eta}^0 \subset T_n\Phi^M(T_nM)$, and by Lemma \ref{lem gxi p}, we have $\g_{\eta} \cap \p = \{0\}$.
Now, using the fact that $V^0 + W^0 = (V \cap W)^0$ for two linear subspaces $V$ and $W$ of a vector space, we see that
\[
 T_{\eta}\kk^* + T_n\Phi^M(T_nM) \supset \p^0 + \g_{\eta}^0 = (\p \cap \g_{\eta})^0 = \{0\}^0 = \g^*.
\]
This shows that $N$  is indeed smooth.

Next, we prove that $\omega|_N$ is a symplectic form. It is closed because $\omega$ is, so it remains to show that it is nondegenerate. Let $n \in N$ be given. By Lemma \ref{lem sympl subsp}, it is enough to show that $T_nM = T_nN + \p \cdot n$, and that $T_nN$ and $\p \cdot n$ are symplectically orthogonal.

We prove that $T_nM = T_nN \oplus \p \cdot n$, by first noting that
\[
\dim N = \dim M - \dim \g^* + \dim \kk^* = \dim M - \dim \p.
\]
Because $\g_n \subset \g_{\Phi^M(n)}$, and $\g_{\Phi^M(n)} \cap \p = \{0\}$ by Lemma \ref{lem gxi p},
we have $\dim \p = \dim (\p \cdot n)$, and
\[
\dim T_nM = \dim T_nN + \dim (\p \cdot n).
\]
It is therefore enough to prove that $T_nN \cap \p \cdot n = \{0\}$. To this end, let $X \in \p$ be given, and suppose $X_n \in T_nN$. That is, $T_n\Phi^M(X_n) \in \kk^*$, which is to say that for all $Y \in \p$,
\[
\omega_{n}(X_n, Y_n) = -\langle T_n\Phi^M(X_n), Y \rangle = 0.
\]
By Lemma \ref{lem pm}, it follows that $X_n = 0$, so that indeed $T_nN \cap \p \cdot n = \{0\}$.

Finally, we show that for all $v \in T_nN$ and $X \in \p$, we have $\omega_n(v, X_n)=0$.
Indeed, for such $v$ and $X$, we have $T_n\Phi^M(v) \in \kk^* \cong \p^0$, so
\[
\omega_n(v,X_n) = \langle T_n\Phi^M(v), X\rangle = 0.
\]
\hfill $\square$

\subsection[Induction and cross-sections are inverse]{Hamiltonian induction and taking Hamiltonian cross-sections are mutually inverse} \label{sec inv}

Let us prove the statement in the title of this subsection. One side of it (Proposition \ref{prop Ind Res inv}) will be used in the proof of Theorem \ref{GSss} in Subsection \ref{sec [Q,I] = [Q,R]}. We will not use the other side (Proposition \ref{prop Res Ind inv}).

\subsubsection*{Induction of a cross-section}

First, we have
\begin{proposition} \label{prop Ind Res inv}
Let $(M, \omega, \Phi^M)$ and $(N, \nu, \Phi^N) := \HCross_K^G(M, \omega, \Phi^M)$ be as in Subsection \ref{sec res}. Consider the manifold $\widetilde{M}:= G \times_K N$, with symplectic form $\tilde \omega$ equal to the form $\omega$ in \eqref{eq omega}. Define the map $\widetilde{\Phi^M}$ as the map $\Phi^M$ in \eqref{eq Phi}. Then the map
\[
\varphi: \widetilde{M} \to M
\]
given by
\[
\varphi[g,n] = g \cdot n
\]
is a well-defined, $G$-equivariant symplectomorphism, and $\varphi^*\Phi^M = \widetilde{\Phi^M}$.

Put differently, $\HInd_K^G \circ \HCross_K^G$ is the identity, modulo equivariant symplectomorphisms that intertwine the momentum maps.
\end{proposition}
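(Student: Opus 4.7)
The plan is to verify five properties in sequence: well-definedness of $\varphi$, $G$-equivariance, intertwining of momentum maps, bijectivity as a diffeomorphism, and compatibility of the two symplectic forms. Three of these are essentially formal; the substantive work is the diffeomorphism claim (which rests on the stabiliser structure for elements of $\gse$) and a direct calculation for the symplectic form at a single fiber.

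First, since $\Phi^M$ is $K$-equivariant and $\kk^* \subset \g^*$ is $K$-invariant, $N$ is $K$-invariant, so $(gk^{-1})(kn) = gn$ shows $\varphi$ is well-defined. Equivariance is immediate from the action by left multiplication on the first factor. The momentum map relation reads
\[
 \Phi^M(\varphi[g,n]) = \Phi^M(gn) = \Ad^*(g)\Phi^M(n) = \Ad^*(g)\Phi^N(n) = \widetilde{\Phi^M}[g,n].
\]

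For bijectivity, surjectivity is direct: every $m \in M$ satisfies $\Phi^M(m) \in \gse = \Ad^*(G)(\torus^*_+ \setminus \ncw) \subset \Ad^*(G)\kk^*$, so some $g \in G$ has $\Ad^*(g)\Phi^M(m) \in \kk^*$, which means $gm \in N$ and $m = \varphi[g^{-1}, gm]$. For injectivity, $\varphi[g_1,n_1] = \varphi[g_2,n_2]$ gives $g := g_2^{-1}g_1$ with $gn_1 = n_2$, hence $\Ad^*(g)\Phi^M(n_1) = \Phi^M(n_2)$, both sides lying in $\kk^* \cap \gse$. The structural input I need is that (i) $G_\xi = K_\xi \subset K$ for every $\xi \in \kk^* \cap \gse$ (the stabiliser is generated by a compact Cartan and the compact root vectors fixing $\xi$), and (ii) any $G$-orbit in $\gse$ meets $\kk^*$ in a single $K$-orbit; both follow from the explicit description $\gse = \Ad^*(G)(\torus^*_+ \setminus \ncw)$ and the Weyl-group analysis of its orbits. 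Choosing $k \in K$ with $\Ad^*(k)\Phi^M(n_1) = \Phi^M(n_2)$, we obtain $k^{-1}g \in G_{\Phi^M(n_1)} \subset K$, so $g \in K$ and $[g_1, n_1] = [g_2, n_2]$. Smoothness of $\varphi^{-1}$ follows from the inverse function theorem once we check $d\varphi$ is an isomorphism at $[e,n]$: it sends $(v, X) \in T_nN \oplus \p$ to $v + X_n \in T_nM$, which is an isomorphism by the decomposition $T_nM = T_nN \oplus \p\cdot n$ (established in the proof of Proposition \ref{prop Hamres}) together with injectivity of $\p \to T_nM,\; X \mapsto X_n$ from Lemma \ref{lem gxi p}.

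Finally, $\varphi^*\omega = \tilde\omega$ needs to be checked only at points $[e,n]$ by $G$-equivariance. For $v, w \in T_nN$ and $X, Y \in \p$,
\[
 \varphi^*\omega_{[e,n]}(v+X, w+Y) = \omega_n(v, w) + \omega_n(v, Y_n) + \omega_n(X_n, w) + \omega_n(X_n, Y_n).
\]
The first term equals $\nu_n(v,w)$ by definition of $\nu = \omega|_N$. The middle two vanish: the momentum map equation gives $\omega_n(Z_n, u) = \langle T_n\Phi^M(u), Z\rangle$ for $u \in T_nM$, $Z \in \g$, and for $u \in T_nN$ we have $T_n\Phi^M(u) \in \p^0$, which annihilates any $Z \in \p$. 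The last term equals $-\langle \Phi^M(n), [X,Y]\rangle = -\langle \Phi^N(n), [X,Y]\rangle$ by the standard identity $\omega(X_M, Y_M)_m = -\langle \Phi(m), [X,Y]\rangle$ for fundamental vector fields. The sum reproduces exactly the defining formula \eqref{eq omega} for $\tilde\omega$.

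The hardest step is injectivity: one must pin down that strongly elliptic elements of $\kk^*$ have their $G$-stabilisers inside $K$, and that $G$-orbits in $\gse$ meet $\kk^*$ in single $K$-orbits. Both follow from the description of $\gse$ as the $G$-saturation of $\torus^*_+ \setminus \ncw$, but these are precisely the ingredients that drive the entire induction/cross-section duality.
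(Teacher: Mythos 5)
Your proposal is correct and follows essentially the same line of argument as the paper: well-definedness, equivariance, and the momentum-map identity are formal; injectivity and surjectivity both come from the fact that $\torus^*_+ \setminus \ncw$ is a fundamental domain for the coadjoint $G$-action on $\gse$ together with $G_\xi \subset K$; the inverse is smooth by the inverse function theorem via $T_nM = T_nN \oplus \p\cdot n$; and $\varphi^*\omega = \tilde\omega$ is checked at $[e,n]$ using Lemma \ref{lem psi} and the symplectic orthogonality of $T_nN$ and $\p\cdot n$. The only cosmetic difference is that you re-derive the vanishing of the cross terms from the momentum-map identity rather than citing the orthogonality established in the proof of Proposition \ref{prop Hamres}, which is where that orthogonality is proved by exactly your computation.
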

It follows from this proposition that $M/G = N/K$, so that $M/G$ is compact if and only if $N$ is compact.
\begin{proof}
The statement about the momentum maps follows from $G$-equivariance of $\Phi^M$.

The map $\varphi$ is well-defined by definition of the action of $K$ on $G \times N$.
It is obviously $G$-equivariant. Furthermore, $\varphi$ is smooth because the action of $G$ on $M$ is smooth (this was a tacit assumption),
and by definition of the smooth structure on the quotient $G \times_K N$.

To prove injectivity of $\varphi$, let $g, g' \in G$ and $n, n' \in N$ be given, and suppose that $g \cdot n = g' \cdot n'$.
Because $\Phi^M(N) \subset \kse$, there are $k, k' \in K$ and $\xi, \xi' \in \torus^*_+ \setminus \ncw$ such that
\[
\begin{split}
\Phi^M(n) &= k\cdot \xi ; \\
\Phi^M(n') &= k' \cdot \xi'.
\end{split}
\]
Then by equivariance of $\Phi^M$, we have $gk \cdot \xi = g'k' \cdot \xi'$.
Because $\torus^*_+ \setminus \ncw$ is a fundamental domain for the coadjoint action of $G$ on $\gse$, we must have $\xi = \xi'$, and
\[
k'^{-1}g'^{-1}gk \in G_{\xi} \subset K.
\]
So $k'' := g'^{-1}g \in K$. Hence
\[
g'k''n = g \cdot n = g'\cdot n',
\]
and $k''\cdot n = n'$. We conclude that
\[
[g', n'] = [gk''^{-1}, k'' \cdot n] = [g,n],
\]
and $\varphi$ is injective.

To prove surjectivity of $\varphi$, let $m \in M$ be given. Since $\Phi^M(m) \in \gse$, there are $g \in G$ and $\xi \in \torus^*_+ \setminus \ncw$ such that $\Phi^M(m) = g\cdot \xi$. Set $n := g^{-1}m$. Then $\Phi^M(n) = \xi \in \kk^*$, so $n \in N$, and $\varphi[g,n]=m$.

Next, we show that the inverse of $\varphi$ is smooth. We prove this using the inverse function theorem: smoothness of $\varphi^{-1}$
follows from the fact that the tangent map $T\varphi$ is invertible. Or, equivalently, from the fact that the map $\widetilde{T\varphi}$,
 defined by the following diagram, is invertible.
\[
\xymatrix{T(G \times_K N) \ar[r]^-{T\varphi} & TM \\
	TG \times_{TK} TN. \ar[u]^{\Psi}_{\cong} \ar[ur]_-{\widetilde{T\varphi}}}
\]
Here $\Psi$ is the isomorphism from Proposition \ref{TM}. Explicitly, the map $\widetilde{T\varphi}$ is given  by
\[
\begin{split}
\widetilde{T\varphi}[g,X,v] &= T\varphi \circ Tp(g,X,v) \\
	&= T\alpha(g,X,v),
\end{split}	
\]
for all $g \in G$, $X \in \g$ and $v \in T_nN$,
with $\alpha: G \times N \to M$ the action map. Let $\gamma$ be a curve in $N$ with $\gamma(0)=n$ and $\gamma'(0)=v$. Then we find that
\begin{equation} \label{eq Talpha}
\begin{split}
T\alpha(g,X,v) &= \ddt \exp(tX)g \cdot \gamma(t) \\
	&= X_{gn} + T_{n}g(v).
\end{split}
\end{equation}

Because the vector bundles $TG \times_{TK} TN$ and $TM$ have the same rank, it is enough to show that $\widetilde{T\varphi}$
 is surjective. To this end, let $m \in M$ and $w \in T_mM$ be given. Since $\varphi$ is surjective, there are $g \in G$ and $n \in N$ such that $m=g\cdot n$. Furthermore, we have
\[
T_nM = T_nN + \g \cdot n.
\]
Indeed, in our situation we even have $T_nM = T_nN \oplus \p \cdot n$ (see the proof of Proposition \ref{prop Hamres}). Hence
\[
T_mM = T_n g(T_nM) = T_ng(T_nN + \g \cdot n).
\]
Therefore, there are $v \in T_nN$ and $X \in \g$ such that
\[ \begin{split}
w &= T_ng(v + X_n) \\
	&= T_ng(v) + \bigl(\Ad(g)X \bigr)_{g \cdot n} \\
	&= \widetilde{T\varphi}[g,\Ad(g)X,v],
\end{split} \]
by \eqref{eq Talpha}. This shows that $\widetilde{T\varphi}$ is indeed surjective.

Finally, we prove that $\varphi$ is a symplectomorphism. Let $n \in N$, $v,w \in T_nN$ and $X,Y \in \p$  be given. We will show that
\[
\omega_n \bigl(T_{[e,n]}\varphi(v+X), T_{[e,n]}\varphi(w+Y) \bigr) = \omega_n(v,w) - \langle \Phi^M(n), [X,Y]\rangle.
\]
By $G$-invariance of the symplectic forms $\omega$ and $\tilde \omega$, this implies that $\varphi$ is a symplectomorphism on all of $\widetilde{M}$.

Similarly to \eqref{eq Talpha}, we find that $T_{[e,n]}\varphi(v+X) = v + X_n$.
Therefore,
\begin{align}
\omega_n \bigl(T_{[e,n]}\varphi(v+X), T_{[e,n]}\varphi(w+Y) \bigr) &= \omega_n(v+X_n, w+Y_n) \nonumber \\
	&= \omega_n(v, w) + \omega_n(X_n, Y_n) , \label{eq sympl forms}
\end{align}
since $T_nN$ and $\p \cdot n$ are symplectically orthogonal (see the end of the proof of Proposition \ref{prop Hamres}). Now applying Lemma \ref{lem psi} to the first term in \eqref{eq sympl forms} gives the desired result.
\end{proof}

\subsubsection*{Cross-section of an induction}

Conversely to Proposition \ref{prop Ind Res inv}, we have:
\begin{proposition} \label{prop Res Ind inv}
Let $(N, \nu, \Phi^N)$ and $(M, \omega, \Phi^M) := \HInd(N, \nu, \Phi^N)$ be as in Subsection \ref{sec ind}. Suppose $\Phi^N(N) \subset \kse$. Then
\[
(N, \nu) \cong \Bigl(\bigl(\Phi^M\bigr)^{-1}(\kk^*), \omega|_{\left(\Phi^M\right)^{-1}(\kk^*)}\Bigr),
\]
and this isomorphism intertwines the momentum maps $\Phi^N$ and $\Phi^M$.

In other words, $\HCross^G_K \circ \HInd^G_K$ is the identity, modulo equivariant symplectomorphisms that intertwine the momentum maps.
\end{proposition}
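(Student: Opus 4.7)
The plan is to show that the map $\iota : N \to M$, $\iota(n) := [e, n]$, is a $K$-equivariant symplectomorphism onto $N' := \bigl(\Phi^M\bigr)^{-1}(\kk^*)$ intertwining the momentum maps. The cheap compatibilities come first: $\Phi^M(\iota(n)) = \Ad^*(e)\Phi^N(n) = \Phi^N(n) \in \kse \subset \kk^*$, so $\iota$ lands in $N'$ and simultaneously $\iota^*\Phi^M = \Phi^N$; and $K$-equivariance follows from the equivalence relation defining $G \times_K N$, because for $k \in K$ one has $k \cdot \iota(n) = [k, n] = [e, k \cdot n] = \iota(k \cdot n)$. Injectivity of $\iota$ is immediate, since $(e, n) \sim (e, n')$ in $G \times N$ forces $n = n'$.

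The central step is to prove $\iota(N) = N'$. Given $[g, n] \in N'$, both $\Phi^N(n)$ and $\Ad^*(g)\Phi^N(n)$ lie in $\kk^* \cap \gse = \kse$. Writing them as $k_1 \cdot \xi_1$ and $k_2 \cdot \xi_2$ with $k_i \in K$ and $\xi_i \in \torus^*_+ \setminus \ncw$, and using that $\torus^*_+ \setminus \ncw$ is a fundamental domain for the coadjoint $G$-action on $\gse$, one obtains $\xi_1 = \xi_2$ and $k_2^{-1} g k_1 \in G_{\xi_1}$. Since $G_{\xi_1}$ is compact and, by Lemma \ref{lem gxi p}, has Lie algebra contained in $\kk$, we have $G_{\xi_1} \subset K$ (exactly as invoked in the proof of Proposition \ref{prop Ind Res inv}), so $g \in K$ and $[g, n] = [e, g \cdot n] = \iota(g \cdot n)$.

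To promote this bijection to a diffeomorphism, I would identify $T_{[e, n]} M \cong T_n N \oplus \p$ via Lemma \ref{TM2}; then $T_n \iota$ is the inclusion into the first summand. A vector $(v, X) \in T_n N \oplus \p$ is tangent to $N'$ iff $T_n \Phi^N(v) + \ad^*(X)\Phi^N(n) \in \p^0$, and since $T_n \Phi^N(v) \in \p^0$ already, this reduces to $\langle \Phi^N(n), [X, Y]\rangle = 0$ for all $Y \in \p$. Lemma \ref{lem pm}, applied at $[e, n]$ (which lies in the $\kk^*$-cross-section of $M$ since $\Phi^M[e, n] = \Phi^N(n) \in \kk^*$), says that $\p \cdot [e, n]$ is a symplectic subspace of $T_{[e, n]} M$, which is precisely nondegeneracy of the pairing $(X, Y) \mapsto \langle \Phi^N(n), [X, Y]\rangle$, forcing $X = 0$. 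Hence $T_{[e, n]} N' = T_n \iota(T_n N)$, so $\iota$ is a local diffeomorphism and therefore a diffeomorphism onto $N'$. Finally, $\iota^*(\omega|_{N'}) = \nu$ reads off directly from \eqref{eq omega} by setting $X = Y = 0$. The genuine obstacle is the surjectivity step: this is exactly where the hypothesis $\Phi^N(N) \subset \kse$ is essential, for without it the fundamental-domain argument that traps $g$ inside $K$ collapses and $\iota(N)$ can be a proper subset of $N'$.
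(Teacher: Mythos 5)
Your proof is correct and follows essentially the same route as the paper's: the central surjectivity step, using the fact that $\torus^*_+ \setminus \ncw$ is a fundamental domain for the coadjoint $G$-action on $\gse$ to trap $g$ inside $K$, is exactly the argument in the text. You go further than the paper in the final two steps — the paper simply asserts that $n \mapsto [e,n]$ is a diffeomorphism onto $\widetilde N$ and that the forms are intertwined ``by definition,'' whereas you actually compute the tangent space $T_{[e,n]}N'$ via Lemmas \ref{TM2} and \ref{lem pm} and read off $\iota^*(\omega|_{N'}) = \nu$ from \eqref{eq omega}; this added detail is a genuine improvement in rigor, though it does not change the underlying strategy.
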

\begin{proof}
We claim that
\begin{equation} \label{tilde N}
\bigl(\Phi^M\bigr)^{-1}(\kk^*) = \{[e,n]; n  \in N\} =: \widetilde{N}.
\end{equation}
The map $n \mapsto [e,n]$ is a diffeomorphism from $N$ to $\widetilde{N}$. It is clear that this diffeomorphism is $K$-equivariant, and intertwines the momentum maps $\Phi^N$ and $\Phi^M$.

To prove that $\bigl(\Phi^M\bigr)^{-1}(\kk^*) = \widetilde{N}$, let $[g,n] \in M$ be given, and suppose $\Phi^M[g,n] = g\cdot \Phi^N(n) \in \kk^*$. Because $\Phi^N(N) \subset \kse$, we have
\[
g \cdot \Phi^N(n) \in \bigl(G \cdot \kse\bigr) \cap \kk^* = \kse.
\]
So there are $k, k' \in K$ and $\xi, \xi' \in \torus^*_+ \setminus \ncw$ such that
\[
\begin{split}
\Phi^N(n) &= k\cdot \xi;\\
g\cdot \Phi^N(n) &= k'\cdot \xi'.
\end{split}
\]
Hence $gk\cdot \xi = k'\cdot \xi'$, and since $\torus^*_+ \setminus \ncw$ is a fundamental domain for the coadjoint action of $G$ on $\gse$, we have $\xi'=\xi$. So
\[
k'^{-1}gk \in G_{\xi} \subset K,
\]
and hence $g \in K$. We conclude that $[g,n] = [e, g^{-1}n]$, which proves \eqref{tilde N} (the inclusion $\widetilde{N} \subset \bigl(\Phi^M\bigr)^{-1}(\kk^*)$ follows from the definition of $\Phi^M$).

For each $n \in N$, the natural isomorphism $v \mapsto [e,0,v]$ from $T_nN$ to $T_{[e,n]} \widetilde{N}$ intertwines the respective symplectic forms, by definition of those forms.
\end{proof}

\section[Induction of other structures]{Induction of prequantisations and $\Spin^c$-structures}\label{sec preqSpin}

We extend the induction procedure of Section \ref{cpt <-> ncpt} to prequantisations and to  $\Spin^c$-structures, used to define quantisation. For prequantisations, it is possible to define restriction to a Hamiltonian cross-section
in a suitable way.  For our purposes, it is not necessary to restrict $\Spin^c$-structures.

\subsection{Prequantisations} \label{sec preq} \index{prequantisation!induced --}

Since we are interested in quantising Hamiltonian actions, let us look at induction  of prequantum line bundles, and at restriction to Hamiltonian cross-sections.

\subsubsection*{Restriction to Hamiltonian cross-sections}

The easy part is restriction. Indeed, let $(M, \omega)$ be a Hamiltonian $G$-manifold, let $\Phi^M$ be a momentum map with $\Phi^M(M) \subset \gse$,
and let $(N, \nu, \Phi^N)$ be the Hamiltonian cross-section of this action. Now let ${L^{\omega}} \to M$ be a prequantum line bundle, let $(\relbar, \relbar)_{{L^{\omega}}}$ be a $G$-invariant
Hermitian metric on ${L^{\omega}}$,  and let $\nabla^M$ be a $G$-equivariant Hermitian connection
on ${L^{\omega}}$ with curvature $2\pi i \, \omega$. Let $\nabla^N$ be the connection on ${L^{\nu}} := {L^{\omega}}|_N$ defined as the pullback of $\nabla^M$ along the inclusion map $N \hookrightarrow M$. It is given by
\[
\nabla^N\bigl(s|_N\bigr) = \left(\nabla^Ms \right)|_N,
\]
for all sections $s \in \Gamma^{\infty}({L^{\omega}})$. This is indeed a connection, with curvature
\[
R_{\nabla^N} = R_{\nabla^M}|_N = 2\pi i \,  \omega|_N = 2 \pi i \, \nu.
\]
Furthermore, it is Hermitian with respect to the restriction \mbox{$(\relbar, \relbar)_{{L^{\nu}}}$} of \mbox{$(\relbar, \relbar)_{{L^{\omega}}}$.}
That is, $({L^{\nu}}, (\relbar, \relbar)_{{L^{\nu}}}, \nabla^N)$ is a prequantisation of the action of $K$ on $N$.

In the same way, we see that a $\Spin^c$-prequantum line bundle on $(M, \omega)$, that is, a prequantum line bundle on $(M, 2\omega)$, restricts to a $\Spin^c$-prequantum line bundle on $(N, 2\nu)$.

\subsubsection*{Induction: an auxiliary connection $\nabla$}

Now let us consider induction of prequantisations. As in Subsection \ref{sec ind}, let $(N, \nu)$ be a Hamiltonian $K$-manifold,
with momentum map $\Phi^N$.  Let $(M, \omega, \Phi^M)$ be the Hamiltonian induction of these data.
Let $\bigl({L^{\nu}}, (\relbar, \relbar)_{{L^{\nu}}}, \nabla^{N}\bigr) $ be an equivariant prequantisation of the action of $K$ on $N$.
As in the case of restriction, the following argument extends directly to $\Spin^c$-prequantisations.

Consider the line bundle
\[
{L^{\omega}} := G \times_K {L^{\nu}} \to M,
\]
with the natural projection map $[g,l] \mapsto [g,n]$ for $g \in G$, $n \in N$ and $l \in L^{\nu}_n$.
Let $(\relbar, \relbar)_{{L^{\omega}}}$ be the $G$-invariant Hermitian metric on ${L^{\omega}}$ induced by \mbox{$(\relbar, \relbar)_{{L^{\nu}}}$:} for all $g, g' \in G$, $n \in N$ and $l, l' \in L^{\nu}_n$, set
\[
\bigl([g,l], [g',l']\bigr)_{{L^{\omega}}} := (l,l')_{{L^{\nu}}}.
\]
In the remainder of this subsection, we will construct a connection $\nabla^M$ on ${L^{\omega}}$, such that $\bigl({L^{\omega}}, (\relbar, \relbar)_{{L^{\omega}}}, \nabla^M \bigr)$
 is a $G$-equivariant prequantisation of $(M, \omega)$. This is by definition the prequantisation induced by $\bigl({L^{\nu}}, (\relbar, \relbar)_{{L^{\nu}}}, \nabla^{N}\bigr)$.

To construct the connection $\nabla^M$, we consider the line bundle
\[
L := G \times {L^{\nu}} \to G \times N,
\]
with the obvious projection map $(g,l) \mapsto (g,n)$, for all $g \in G$, $l \in L^{\nu}_n$. Then ${L^{\omega}} = L/K$, where $K$ acts on $L$ by
\[
k \cdot(g,l) = (gk^{-1},  k \cdot l),
\]
for $k \in K$, $g \in G$ and $l \in {L^{\nu}}$. We therefore have a linear isomorphism
\[
\psi_L: \Gamma^{\infty}(L)^K \to \Gamma^{\infty}({L^{\omega}}),
\]
given by
\begin{equation} \label{eq psiLM}
\psi_L(\sigma)[g,n] = [\sigma(g,n)].
\end{equation}
We will construct $\nabla^M$ as the connection induced by a $K$-equivariant connection $\nabla$ on $L$.
The space $\Gamma^{\infty}(L)$ of sections of $L$ is isomorphic to the space
\[
\widetilde{\Gamma}^{\infty}(L) := \{s: G \times N \xrightarrow{C^{\infty}} {L^{\nu}}; s(g,n) \in L^{\nu}_n \text{ for all $g \in G$ and $n \in N$}.\}
\]
Indeed, the isomorphism is given by $s \mapsto \sigma$, where $\sigma(g,n)  =(g, s(g, n))$. For $s \in \widetilde{\Gamma}^{\infty}(L)$, $g \in G$ and $n \in N$, we write
\[
s_g(n) := s(g,n) =: s^n(g).
\]
(We will use the same notation when $s$ is replaced by a function on $G \times N$.) Then for fixed $g$, $s_g$ is a section of ${L^{\nu}}$, and for fixed $n$, $s^n$ is a function
\[
s^n: G \to L^{\nu}_n.
\]

Let $s \in \widetilde{\Gamma}^{\infty}(L)$, $X \in \g$,  $v \in \mathfrak{X}(N)$, $g \in G$ and $n \in N$ be given. We define
\begin{equation} \label{eq nabla}
\bigl(\nabla_{v+X}s\bigr)(g,n) := \bigl(\nabla^N_v s_g\bigr)(n) + X(s^n)(g) + 2\pi i \, \Phi^N_{X_{\kk}}(n) s(g,n).
\end{equation}
Here we have written $X = X_{\kk} + X_{\p} \in \kk \oplus \p$. (The subscript $\kk$ in $X_{\kk}$ in \eqref{eq nabla} is actually superfluous, because we identify $\kk^*$ with $\p^0 \subset \g^*$.) In the expression $X(s^n)$, we view $X$ as a left invariant vector field on $G$, acting on the function $s^n$. Note that all tangent vectors in $T_{(g, n)}(G \times N)$ are of the form $X_g+v_n = (g, X, v_n) \in T_gG \times T_nN$, and therefore the above formula determines $\nabla$ uniquely. We claim that $\nabla$ is a $K$-equivariant connection on $L$ with the right curvature, so that it induces a connection $\nabla^M$ on ${L^{\omega}}$ with curvature $\omega$.

\begin{lemma} \label{lem nabla conn}
The formula \eqref{eq nabla} defines a connection $\nabla$ on $L$.
\end{lemma}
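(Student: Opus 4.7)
The plan is to verify directly that the formula \eqref{eq nabla} defines a $\C$-linear operator $\nabla : \widetilde{\Gamma}^\infty(L) \to \Gamma^\infty(T^*(G \times N) \otimes L)$ satisfying the Leibniz rule. Since every tangent vector at a point $(g,n) \in G \times N$ is of the form $X_g + v_n$ with $X \in \g$ (viewed as a left-invariant vector field on $G$, hence determined by its value $X_g$) and $v_n \in T_nN$, and since such vector fields generate $\X(G \times N)$ over $C^\infty(G \times N)$, it suffices to check the connection axioms on input vector fields of the form $v+X$ with $v \in \X(N)$ and $X \in \g$.

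First I would check that the right-hand side of \eqref{eq nabla} is pointwise in the vector field argument, that is, depends on $v+X$ only through $v_n + X_g$. The first summand $(\nabla^N_v s_g)(n)$ depends only on $v_n$ because $\nabla^N$ is a connection on ${L^{\nu}}$; the second summand $X(s^n)(g)$ depends only on $X_g$ by the derivation property of tangent vectors applied to the smooth function $s^n$ on $G$; and the third summand is purely algebraic at $(g,n)$. Together with the obvious $\R$-linearity in $v$ and in $X$, this yields $C^\infty(G \times N)$-linearity in the vector field argument, so $\nabla$ extends uniquely to a $\C$-linear operator $\nabla_w s$ for arbitrary $w \in \X(G \times N)$.

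Next I would verify the Leibniz rule $\nabla_w(fs) = w(f)\,s + f\,\nabla_w s$ for $f \in C^\infty(G \times N)$ and $s \in \widetilde{\Gamma}^\infty(L)$. For $w = v + X$ one has $(fs)_g = f_g \cdot s_g$ and $(fs)^n = f^n \cdot s^n$. Applying the Leibniz rule of $\nabla^N$ to the first term and the derivation property of $X$ to the second, while noting that the third summand in \eqref{eq nabla} is $C^\infty$-linear in $s$, the derivatives of $f$ group into $v_n(f_g) + X_g(f^n)$. The key compatibility observation is that this sum equals $(v_n + X_g)(f) = w(f)(g,n)$, because any tangent vector to a product manifold acts on a function as the sum of its two partial actions. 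The remaining terms collect precisely into $f(g,n)\,\nabla_{v+X}s(g,n)$.

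The only subtlety, and thus the one point that genuinely needs the form of \eqref{eq nabla} rather than the existence of some connection, is precisely the compatibility identity $v_n(f_g) + X_g(f^n) = (v+X)(f)(g,n)$; everything else reduces to the Leibniz rule for $\nabla^N$ and for ordinary differentiation along a left-invariant vector field on $G$. The $K$-equivariance of $\nabla$, its curvature, and the consequent descent to a connection $\nabla^M$ on ${L^{\omega}} = L/K$ are deferred to the subsequent lemmas in this subsection, so here I would stop once the connection axioms are established.
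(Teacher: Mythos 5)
Your proposal is correct and follows essentially the same route as the paper's proof: you verify locality/$C^\infty(G\times N)$-linearity by observing each summand of \eqref{eq nabla} depends only on $v_n$ or $X_g$, and you reduce the Leibniz rule to the same key identity $(v+X)(f)(g,n) = v(f_g)(n) + X(f^n)(g)$ that the paper isolates. You are also right to defer equivariance, curvature, and descent to $\nabla^M$ to the subsequent lemmas.
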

\begin{proof}
The Leibniz rule for $\nabla$ follows from the fact that for $f \in C^{\infty}(G \times N)$, $X \in \g$,  $v \in \mathfrak{X}(N)$, $g \in G$ and $n \in N$, one has
\[
(v+X)(f)(g,n) = v(f_g)(n) + X(f^n)(g).
\]

Linearity over $C^{\infty}(G \times N)$ in the vector fields follows from the fact that, with notation as above,
\[
\bigl(f(v+X) \bigr)_{(g,n)} = \bigl(f^nX \bigr)_g + \bigl(f_gv \bigr)(n).
\]

Locality is obvious.
\end{proof}

\subsubsection*{Properties of the connection $\nabla$}

Let $(\relbar, \relbar)_L$ be the Hermitian metric on $L$ given by
\[
\bigl((g,l), (g',l')\bigr)_L := (l,l')_{{L^{\nu}}}
\]
for all $g,g' \in G$ and $l,l' \in L^{\nu}_n$.
\begin{lemma} \label{lem nabla Herm}
The connection $\nabla$ is Hermitian with respect to this metric.
\end{lemma}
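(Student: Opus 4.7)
The plan is to verify the Hermitian property
\[
W\bigl((s,t)_L\bigr) = (\nabla_W s, t)_L + (s, \nabla_W t)_L
\]
directly from the definition \eqref{eq nabla}, for an arbitrary section $s, t \in \widetilde{\Gamma}^{\infty}(L)$ and an arbitrary tangent vector $W = v + X$ at a point $(g,n) \in G \times N$, with $v \in T_nN$ and $X \in \g$. The key observation that makes the computation painless is that the metric $(\relbar,\relbar)_L$ ignores the $G$-factor: the function $(s,t)_L$ on $G \times N$ is simply $(g,n) \mapsto (s_g(n), t_g(n))_{L^{\nu}}$, or equivalently $(g,n) \mapsto (s^n(g), t^n(g))_{L^{\nu}}$.

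First I would evaluate the left hand side. Using the Leibniz-type decomposition of a vector field on the product $G \times N$ (the same one that appeared in the proof of Lemma \ref{lem nabla conn}), the derivative $(v+X)\bigl((s,t)_L\bigr)(g,n)$ splits as $v\bigl((s_g, t_g)_{L^\nu}\bigr)(n) + X\bigl((s^n, t^n)_{L^\nu}\bigr)(g)$. The first summand expands, by the Hermitian property of $\nabla^N$, into $(\nabla^N_v s_g, t_g)_{L^\nu}(n) + (s_g, \nabla^N_v t_g)_{L^\nu}(n)$. The second summand is the derivative of a $\C$-valued function on $G$ whose values are pairings in the single fibre $L^\nu_n$, so ordinary Leibniz gives $(X(s^n)(g), t^n(g))_{L^\nu} + (s^n(g), X(t^n)(g))_{L^\nu}$.

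Next I would evaluate the right hand side by substituting \eqref{eq nabla} into $(\nabla_{v+X}s, t)_L + (s, \nabla_{v+X}t)_L$ at $(g,n)$. This produces exactly the four terms just obtained, plus the two curvature-type contributions
\[
2\pi i \, \Phi^N_{X_\kk}(n)\,(s(g,n), t(g,n))_{L^\nu} + \overline{2\pi i \, \Phi^N_{X_\kk}(n)}\,(s(g,n), t(g,n))_{L^\nu}.
\]
Since the momentum map component $\Phi^N_{X_\kk}$ is real-valued, these two terms are complex conjugates of each other times a real factor, and they cancel. The remaining four terms match the left hand side computed above, proving the Hermitian identity.

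The argument involves no real obstacle; the only point to watch is the reality of $\Phi^N_{X_\kk}$, which is the mechanism that turns the purely imaginary zeroth-order term $2\pi i \, \Phi^N_{X_\kk}$ in the definition of $\nabla$ into a connection form compatible with the metric. This is of course the same feature that makes $\nabla$ have curvature of the form $2\pi i$ times a real two-form, and it is precisely what one expects from a prequantum connection.
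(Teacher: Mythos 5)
The paper states Lemma \ref{lem nabla Herm} without proof, so there is nothing in the text to compare against. Your direct verification is correct: splitting $W = v + X$ via the product decomposition used in Lemma \ref{lem nabla conn}, applying Hermitianity of $\nabla^N$ to the $v$-piece, the ordinary Leibniz rule for the $\R$-bilinear pairing to the $X$-piece, and observing that $2\pi i\,\Phi^N_{X_\kk} + \overline{2\pi i\,\Phi^N_{X_\kk}} = 0$ because $\Phi^N_{X_\kk}$ is real is precisely the routine computation the paper is leaving to the reader.
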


Next, we compute the curvature of $\nabla$. This computation is long but straightforward.
\begin{lemma} \label{lem curv nabla}
The curvature $R_{\nabla}$ of $\nabla$ is given by
\[
R_{\nabla}(v+X, w+Y)(g,n) = 2\pi i \bigl(\nu_n(v,w) - \langle \Phi^N(n), [X,Y]_{\kk} \rangle \bigr),
\]
for all $X, Y \in \g$,  $v, w \in \mathfrak{X}(N)$, $g \in G$ and $n \in N$.
\end{lemma}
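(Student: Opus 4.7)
The plan is to compute $R_\nabla$ directly from its definition $R_\nabla(V,W)=[\nabla_V,\nabla_W]-\nabla_{[V,W]}$, with $V=v+X$ and $W=w+Y$ viewed as vector fields on $G\times N$ (so $X,Y\in\g$ act as left-invariant vector fields on the first factor, and $v,w\in\mathfrak{X}(N)$ on the second).

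First I would identify $[V,W]$ on $G\times N$. Since left-invariant vector fields on $G$ and vector fields on $N$ commute as partial differential operators on the product, the cross brackets $[X,w]$ and $[v,Y]$ vanish, leaving $[V,W]=[v,w]+[X,Y]_{\g}$, with $[X,Y]_{\g}$ the left-invariant vector field corresponding to the $\g$-bracket.

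The next step is to decompose $\nabla=\nabla^{(0)}+2\pi i\,B$, where $(\nabla^{(0)}_{v+X}s)(g,n):=(\nabla^N_vs_g)(n)+X(s^n)(g)$ accounts for the two derivative pieces and $B$ is the real-valued $1$-form on $G\times N$ with $B(v+X)(g,n):=\Phi^N_{X_{\kk}}(n)$. Since $L$ is a line bundle, the standard identity $R_{\nabla^{(0)}+2\pi iB}=R_{\nabla^{(0)}}+2\pi i\,dB$ applies. I would then argue that $\nabla^{(0)}$ is the pullback of $\nabla^N$ along the projection $p_N:G\times N\to N$ combined with the trivial connection on the $G$-factor, so its curvature is $p_N^*R_{\nabla^N}=2\pi i\,p_N^*\nu$, giving $R_{\nabla^{(0)}}(V,W)=2\pi i\,\nu_n(v,w)$. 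The verification reduces to cancellation of the mixed terms $\nabla^N_v[n'\mapsto Y(s^{n'})(g)]$ and $X[g'\mapsto(\nabla^N_vs_{g'})(n)]$ in the antisymmetrization of $\nabla^{(0)}_V\nabla^{(0)}_Ws$, which follows from commutativity of partial derivatives along $G$ and $N$ together with the Leibniz rule for $\nabla^N$.

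For the remaining piece I would expand $dB(V,W)=V(B(W))-W(B(V))-B([V,W])$. The terms $X(B(W))$ and $Y(B(V))$ vanish because $B$ depends only on $n$; the remaining $v(\Phi^N_{Y_{\kk}})$ and $w(\Phi^N_{X_{\kk}})$ are rewritten via the momentum map identity $d\Phi^N_Z=\iota(Z_N)\nu$ for $Z\in\kk$, and $B([V,W])(g,n)=\Phi^N_{[X,Y]_{\kk}}(n)$ follows from the bracket computation. Assembling the pieces yields the asserted formula. The main obstacle will be careful bookkeeping of signs and of the decomposition $\g=\kk\oplus\p$: one needs the equivariance identity $\nu_n(Z_N,W_N)=\Phi^N_{[Z,W]}(n)$ for $Z,W\in\kk$, together with the splitting $[X,Y]_{\kk}=[X_{\kk},Y_{\kk}]+[X_{\p},Y_{\p}]$ arising from $[\kk,\kk]\subset\kk$ and $[\p,\p]\subset\kk$, to conspire with the derivative terms from $dB$ and collapse everything to the clean right-hand side of the claim.
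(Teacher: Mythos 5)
The paper omits this proof (calling it ``long but straightforward''), so there is no proof to compare against; I can only assess the argument on its own terms. Your decomposition $\nabla = \nabla^{(0)} + 2\pi i B$ with $B(v+X) = \Phi^N_{X_\kk}$ is a sensible way to organise the computation, and the three pieces you describe are each correct: $R_{\nabla^{(0)}+2\pi i B} = R_{\nabla^{(0)}} + 2\pi i\,dB$ for a line bundle; $\nabla^{(0)}$ is the pullback connection along $p_N\colon G\times N \to N$, so $R_{\nabla^{(0)}}(v+X,w+Y) = 2\pi i\,\nu_n(v,w)$ after the cross-term cancellation you describe; and the bracket $[v+X,w+Y] = [v,w] + [X,Y]_\g$ gives
\[
dB(v+X,w+Y) = v\bigl(\Phi^N_{Y_\kk}\bigr) - w\bigl(\Phi^N_{X_\kk}\bigr) - \Phi^N_{[X,Y]_\kk}.
\]

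The problem is the final assembly. Adding the two pieces gives
\[
R_\nabla(v+X,w+Y) = 2\pi i\Bigl(\nu_n(v,w) + v\bigl(\Phi^N_{Y_\kk}\bigr) - w\bigl(\Phi^N_{X_\kk}\bigr) - \langle\Phi^N(n),[X,Y]_\kk\rangle\Bigr),
\]
and the extra terms $v(\Phi^N_{Y_\kk}) - w(\Phi^N_{X_\kk})$ do \emph{not} collapse. The momentum-map identity only rewrites them as $\nu((Y_\kk)_N, v) - \nu((X_\kk)_N, w)$, which still depends linearly on the arbitrary vector fields $v,w$ and cannot combine with the $\Phi^N_{[X,Y]_\kk}$ term (which does not depend on $v,w$ at all); the equivariance identity $\nu(Z_N,W_N) = \Phi^N_{[Z,W]}$ and the splitting $[X,Y]_\kk = [X_\kk,Y_\kk] + [X_\p,Y_\p]$ do not create a cancellation here, because nothing in your setup replaces $v,w$ by fundamental vector fields. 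A quick sanity check: take $X=0$, $Y\in\kk$ with $Y_N\neq 0$, $w=0$, and $v$ arbitrary; your computation gives $R_\nabla(v,Y) = 2\pi i\,d\Phi^N_{Y}(v) = 2\pi i\,\nu(Y_N,v)$, generically nonzero, whereas the stated formula gives $0$. The computation you outlined, done to the end, therefore shows that the stated identity holds precisely when $X_\kk = Y_\kk = 0$, i.e.\ when $X,Y\in\p$ (so that $[X,Y]_\kk = [X,Y]$ as well). That is exactly the case needed afterwards, where $R_{\nabla^M}$ is compared with $\omega$ on $T_nN\oplus\p$, so the conclusion of the section is unaffected; but your write-up should state and use the hypothesis $X,Y\in\p$ rather than asserting an unproved cancellation for general $X,Y\in\g$.
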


It remains to show that the connection $\nabla$ induces the desired connection $\nabla^M$ on ${L^{\omega}}$. This will follow from $K$-equivariance of $\nabla$.
\begin{lemma}
The connection $\nabla$ is $K$-equivariant in the sense that for all
$X \in \g$, $v \in \mathfrak{X}(N)$, $k \in K$, $s \in \Gamma^{\infty}(L)$, $g \in G$ and $n \in n$, we have
\[
k\cdot \bigl(\nabla_{v+X} s\bigr)  = \nabla_{k\cdot(v+X)} k\cdot s.
\]
\end{lemma}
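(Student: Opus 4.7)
The plan is to verify the equivariance identity by unwinding the $K$-action on $\widetilde{\Gamma}^{\infty}(L)$ and then matching the three summands in \eqref{eq nabla} one at a time. First I unpack notation. Under the identification $\Gamma^{\infty}(L) \cong \widetilde{\Gamma}^{\infty}(L)$, the action $k\cdot(g,l) = (gk^{-1}, k\cdot l)$ becomes
\[
(k\cdot s)(g,n) = k\cdot s(gk, k^{-1}n),
\]
so that $(k\cdot s)_g = k\cdot s_{gk}$ as a section of $L^{\nu}$ (with the standard $K$-action on $\Gamma^{\infty}(L^{\nu})$) and $(k\cdot s)^n = \pi_k \circ s^{k^{-1}n}\circ R_k$. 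On vector fields on $G\times N$, the induced action sends $v+X$ to $k\cdot v + \Ad(k)X$: the $N$-component is just the pushforward, and for the $G$-component one uses that, under right translation $g\mapsto gk^{-1}$, the left-invariant field generated by $X$ becomes the left-invariant field generated by $\Ad(k)X$.

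Next I compute $(\nabla_{k\cdot v + \Ad(k)X}(k\cdot s))(g,n)$ by plugging into \eqref{eq nabla}, and show term by term that it equals $k\cdot (\nabla_{v+X}s)(gk, k^{-1}n)$. For the first term, $\nabla^N_{k\cdot v}(k\cdot s)_g = \nabla^N_{k\cdot v}(k\cdot s_{gk}) = k\cdot \nabla^N_v s_{gk}$ by $K$-equivariance of the prequantum connection $\nabla^N$, and evaluation at $n$ yields $k\cdot (\nabla^N_v s_{gk})(k^{-1}n)$. For the second term, the identity $\exp(t\,\Ad(k)X) = k\exp(tX)k^{-1}$ gives $g\exp(t\,\Ad(k)X)k = gk\exp(tX)$, whence
\[
(\Ad(k)X)\bigl((k\cdot s)^n\bigr)(g) = \ddt k\cdot s^{k^{-1}n}\bigl(gk\exp(tX)\bigr) = k\cdot X(s^{k^{-1}n})(gk).
\]
For the third term, the $\Ad(K)$-invariance of $\p$ gives $(\Ad(k)X)_{\kk} = \Ad(k)X_{\kk}$, and $K$-equivariance of the momentum map yields
\[
\Phi^N_{\Ad(k)X_{\kk}}(n) = \langle \Phi^N(n), \Ad(k)X_{\kk}\rangle = \langle \Phi^N(k^{-1}n), X_{\kk}\rangle = \Phi^N_{X_{\kk}}(k^{-1}n).
\]

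Using linearity of $\pi_k$ on the fibres of $L^{\nu}$ to pull $k\cdot$ outside, the three terms assemble into
\[
k\cdot\bigl[(\nabla^N_v s_{gk})(k^{-1}n) + X(s^{k^{-1}n})(gk) + 2\pi i\,\Phi^N_{X_{\kk}}(k^{-1}n)\,s(gk, k^{-1}n)\bigr],
\]
which by \eqref{eq nabla} is exactly $k\cdot (\nabla_{v+X}s)(gk, k^{-1}n) = (k\cdot \nabla_{v+X}s)(g,n)$. The main obstacle is purely bookkeeping: keeping the three different $K$-actions (right multiplication on $G$, left on $N$, adjoint on $\g$) and their inverses consistent across the three summands. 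Once that is fixed, the substantive inputs are used exactly once each: $K$-equivariance of $\nabla^N$ in the first term, the conjugation formula for $\exp$ in the second, and $K$-equivariance of $\Phi^N$ together with $\Ad$-invariance of $\p$ in the third.
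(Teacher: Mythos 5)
Your proof is correct. The paper in fact states this lemma without proof, and your argument supplies the natural direct verification: unwind the induced $K$-action on $\widetilde{\Gamma}^\infty(L)$ and on vector fields $v+X$, then match the three summands of \eqref{eq nabla} using, respectively, $K$-equivariance of $\nabla^N$, the conjugation identity $\exp(t\,\Ad(k)X)=k\exp(tX)k^{-1}$ (so that $g\exp(t\,\Ad(k)X)k=gk\exp(tX)$), and $K$-equivariance of $\Phi^N$ together with $\Ad(K)$-invariance of the splitting $\g=\kk\oplus\p$.
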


We now define $\nabla^M$ via the isomorphism $\psi_L$ in \eqref{eq psiLM}. Note that by Proposition \ref{TM} and Lemma \ref{TM2}, we have
\[
\begin{split}
\X(M) &\cong \Gamma^{\infty}(G \times_K N, G \times_K(TN \times \p)) \\
	&\cong \Gamma^{\infty}(G \times N, G \times TN \times \p)^K \\
	&\subset \Gamma^{\infty}(G \times N, (G \times \g) \times TN )^K \\
	&= \X(G \times N)^K
\end{split}
\]
We will write $j: \X(M) \hookrightarrow \X(G \times N)^K$ for this embedding map. For $w \in \X(M)$ and $s \in \Gamma^{\infty}(L)^K$, we define the connection $\nabla^M$ by
\[
\nabla^M_w \psi_L(s) := \psi_L \bigl(\nabla_{j(w)}s\bigr).
\]
Because $s$ and $j(w)$ are $K$-invariant, and $\nabla$ is $K$-equivariant, we indeed have $\nabla_{j(w)}s \in \Gamma^{\infty}(L)^K$, the domain of $\psi_L$.

It now follows directly from the definitions and from Lemmas \ref{lem nabla conn}, \ref{lem nabla Herm} and \ref{lem curv nabla}
that $\nabla^M$ is a Hermitian connection on ${L^{\omega}}$ with curvature $\omega$.

\subsubsection*{Induction and restriction}

The induction and restriction procedures for line bundles described above are each other's inverses (modulo equivariant line bundle isomorphisms),
although this does not include the connections on the bundles in question:
\begin{lemma} \label{lem lbindres}
(i) Let $N$ be a $K$-manifold, and $q^N: E^N \to N$ a $K$-vector bundle. Then
\[
\bigl(G\times_K E^N\bigr)|_{\widetilde{N}} \cong E^N,
\]
with $\widetilde{N}$ as in \eqref{tilde N}.

(ii) Let $M$ be a $G$-manifold, $E^M \to M$ a $G$-vector bundle. Let $N \subset M$ be a $K$-invariant submanifold, and denote the restriction of $E^M$ to $N$ by $E^N$. Let $\varphi: G \times_K N \to M$ be the map $\varphi[g,n] = gn$. Then
\[
\varphi^*E^M \cong G \times_K E^N.
\]
\end{lemma}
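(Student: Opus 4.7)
\textbf{Plan for the proof of Lemma \ref{lem lbindres}.}

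Both parts are essentially formal consequences of how fibred products interact with vector bundles. For part (i), the plan is to write down the explicit map $\Psi_N: E^N \to (G \times_K E^N)|_{\widetilde N}$ defined by $v \mapsto [e, v]$ and check that it is a $K$-equivariant isomorphism of vector bundles. Smoothness and fibrewise linearity are clear from the construction of the quotient $G \times_K E^N$. The key computation is to describe the fibre of $G \times_K E^N$ over a point $[e,n] \in \widetilde N$: an element $[g, v]$ projects to $[g, q^N(v)]$, and $[g, q^N(v)] = [e, n]$ forces $g \in K$ and $q^N(v) = g^{-1} n$, so $[g, v] = [e, g \cdot v]$ with $g \cdot v \in E^N_n$. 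This shows $\Psi_N$ is a fibrewise bijection over $\widetilde N \cong N$, with inverse $[g, v] \mapsto g \cdot v$ (defined on the restriction, where $g$ may be taken to lie in $K$). Since $\Psi_N$ and its inverse are smooth on fibres and transverse to the base, $\Psi_N$ is a bundle isomorphism.

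For part (ii), the plan is similar: define
\[
\Psi_M: G \times_K E^N \longrightarrow \varphi^* E^M, \qquad [g, v] \longmapsto \bigl([g, q^M(v)], g \cdot v\bigr),
\]
where $q^M: E^M \to M$ is the bundle projection and we use that $E^N = E^M|_N$ so $v \in E^M_n$ for some $n \in N$. The verification that $\Psi_M$ lands in $\varphi^* E^M$ uses $\varphi[g, q^M(v)] = g\cdot q^M(v) = q^M(g\cdot v)$. Well-definedness on the quotient follows from associativity of the $G$-action: replacing $(g, v)$ by $(gk^{-1}, k\cdot v)$ for $k \in K$ gives the same image. The inverse is $([g, n], w) \mapsto [g, g^{-1} \cdot w]$, which is well-defined on the fibred product because $w \in E^M_{gn}$ forces $g^{-1} \cdot w \in E^M_n = E^N_n$, and replacing $[g,n]$ by $[gk^{-1}, kn]$ leaves the image invariant modulo the $K$-action on $G \times E^N$. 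Both maps are smooth and linear on fibres, hence $\Psi_M$ is a $G$-equivariant isomorphism of vector bundles over $G \times_K N$.

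There is no real obstacle in either part; the lemma is a bookkeeping statement about the interaction between the $G \times_K (-)$ construction and restriction/pullback of vector bundles. The only point requiring care is the description of $\widetilde N$ in (i) (ensuring that the condition $[g, q^N(v)] = [e, n]$ forces $g \in K$), and in (ii) the verification that $\Psi_M$ and its inverse descend to the respective quotient structures. Once those are in place, all maps are manifestly smooth bundle maps, and the constructions are inverse to each other fibrewise by direct computation.
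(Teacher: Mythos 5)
Your proof is correct and follows essentially the same route as the paper: both parts are handled by writing down the explicit isomorphism and checking it fibrewise. The paper compresses part (ii) to the map $([g,n],v)\mapsto [g,v]$ (with $v\in E^M_{gn}$), leaving implicit the translation by $g^{-1}$ that you spell out in your inverse $([g,n],w)\mapsto [g,g^{-1}\cdot w]$; otherwise the arguments coincide.
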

\begin{proof}
\emph{(i)} Note that
\[
\begin{split}
\bigl(G\times_K E^N\bigr)|_{\widetilde{N}} &= \bigl\{[g,v] \in G \times_K E^N; [g, q^N(v)] = [e,n] \text{ for an $n \in N$} \bigr\} \\
	&= \bigl\{[e,v] \in G \times_K E^N; v \in E^N \bigr\} \\
	&\cong E^N.
\end{split}
\]

\emph{(ii)} Note that
\[
\varphi^*E^M = \bigl\{ \bigl([g,n], v \bigr); g \in G, n \in N \text{ and $v \in E^M_{gn}$} \bigr\}. \\
\]
The map $\bigl([g,n], v \bigr) \mapsto [g,v]$ is the desired vector bundle isomorphism onto $G \times_K E^N$.
\end{proof}
For our purposes, it does not matter that this lemma says nothing about connections that may be defined on the vector bundles in question,
because the $K$-homology classes defined by Dirac operators associated to such connections are homotopy invariant. In our setting, the vector bundle isomorphisms in the proof of Lemma \ref{lem lbindres} do intertwine the metrics $(\relbar, \relbar)_{{L^{\omega}}}$ and $(\relbar, \relbar)_{{L^{\nu}}}$ on the respective line bundles.

\subsection{$\Spin^c$-structures} \label{sec Spinc}

Because we want to compare the Dirac operators on $M$ and $N$, we now look at induction of $\Spin^c$-structures. As before, we consider a semisimple group $G$ with maximal compact subgroup $K$, and a $K$-manifold $N$. We form the fibred product $M := G \times_K N$,
and we will show how a $K$-equivariant $\Spin^c$-structure on $N$ induces a $G$-equivariant $\Spin^c$-structure on $M$. It will turn out that the operation of taking determinant line bundles intertwines the induction process for $\Spin^c$-structures in this subsection, and the induction process for prequantum line bundles in the previous one.

\subsubsection*{General constructions}

The construction of induced $\Spin^c$-structures we will use, is based on the following two facts, of which we were informed by Paul-\'Emile Paradan.
\begin{lemma} \label{lem Spinc sum}
For $j=1,2$, let $E_j \to M$ be a real vector bundle over a manifold $M$. Suppose $E_1$ and $E_2$ are equipped with metrics and orientations. Let $P_j \to M$ be a $\Spin^c$-structure on $E_j$, with determinant line bundle $L_j \to M$. Then there is a $\Spin^c$-structure $P \to M$ on the direct sum $E_1 \oplus E_2 \to M$, with determinant line bundle $L_1 \otimes L_2$.
\end{lemma}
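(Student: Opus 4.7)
The plan is to build $P$ as an associated bundle to $P_1 \times_M P_2$ via a natural group homomorphism
\[
\varphi\colon \Spin^c(n_1) \times \Spin^c(n_2) \to \Spin^c(n_1+n_2),
\]
where $n_j := \rank E_j$. To construct $\varphi$, I start from the standard block embedding $\iota\colon \Spin(n_1) \times \Spin(n_2) \hookrightarrow \Spin(n_1+n_2)$ covering $\SO(n_1) \times \SO(n_2) \hookrightarrow \SO(n_1+n_2)$, and combine it with multiplication $\U(1) \times \U(1) \to \U(1)$ to obtain a homomorphism $\Spin(n_1) \times \U(1) \times \Spin(n_2) \times \U(1) \to \Spin(n_1+n_2) \times \U(1)$ given by $((a_1,z_1),(a_2,z_2)) \mapsto (\iota(a_1,a_2), z_1 z_2)$. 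Writing $\Spin^c(n) = (\Spin(n) \times \U(1))/\{\pm(1,1)\}$, I then check that this descends to $\varphi$: the kernel generator $((-1,-1),(1,1))$ maps to $(\iota(-1,1), -1) = (-1,-1)$, using that under $\iota$ the covering involution $-1 \in \Spin(n_1)$ goes to $-1 \in \Spin(n_1+n_2)$, and similarly for the other generator.

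Next, I set
\[
P := (P_1 \times_M P_2) \times_{\Spin^c(n_1) \times \Spin^c(n_2)} \Spin^c(n_1+n_2),
\]
a principal $\Spin^c(n_1+n_2)$-bundle on $M$, where the left action on $\Spin^c(n_1+n_2)$ is via $\varphi$. Using the given isomorphisms $P_j \times_{\Spin^c(n_j)} \R^{n_j} \cong E_j$, together with the fact that under $\varphi$ the standard representation on $\R^{n_1+n_2} = \R^{n_1} \oplus \R^{n_2}$ restricts to the product of the standard representations on each summand, I would produce a canonical metric- and orientation-preserving isomorphism $P \times_{\Spin^c(n_1+n_2)} \R^{n_1+n_2} \cong E_1 \oplus E_2$, exhibiting $P$ as a $\Spin^c$-structure on $E_1 \oplus E_2$.

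Finally, the determinant line bundle is computed from the determinant character $\det\colon \Spin^c(n) \to \U(1)$, $[a,z] \mapsto z^2$. By construction,
\[
(\det \circ \varphi)\bigl([a_1,z_1],[a_2,z_2]\bigr) = (z_1 z_2)^2 = \det[a_1,z_1] \cdot \det[a_2,z_2],
\]
so the character governing the determinant line bundle of $P$ is the tensor product of the analogous characters for $P_1$ and $P_2$; hence $P \times_{\Spin^c(n_1+n_2)} \C \cong L_1 \otimes L_2$, as claimed. The only delicate point is verifying that $\varphi$ descends to the $\Z_2$-quotients, which amounts to tracking the image of the covering involutions under the block embedding $\iota$; once that is settled, the remainder is routine associated-bundle bookkeeping.
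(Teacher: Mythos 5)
Your proof is correct and takes essentially the same approach as the paper: the paper first identifies the subgroup $H \cong \Spin^c(r_1) \times_{\U(1)} \Spin^c(r_2) \subset \Spin^c(r_1+r_2)$ and reduces $P_1 \times_M P_2$ to an $H$-bundle $P_1 \times_{\U(1)} P_2$ before extending, whereas you construct the homomorphism $\varphi: \Spin^c(n_1) \times \Spin^c(n_2) \to \Spin^c(n_1+n_2)$ directly and extend in one step. These are equivalent, since $\varphi$ factors through the quotient onto $H$ (its kernel is the anti-diagonal $\U(1)$ that the paper divides out) followed by the inclusion $H \hookrightarrow \Spin^c(n_1+n_2)$.
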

\begin{proof}
Let $r_j$ be the rank of $E_j$, and write $r:= r_1 + r_2$. Consider the double covering map
\[
\pi: \Spin^c(r) \to \SO(r) \times \U(1),
\]
given by $[a,z] \mapsto (\lambda(a),z)$, where $a \in \Spin(r)$, $z \in \U(1)$, and $\lambda: \Spin(r) \to \SO(r)$ is the standard double covering. Consider the subgroups
\[
H':= \SO(r_1) \times \SO(r_2) \times \U(1)
\]
of $\SO(r) \times \U(1)$, and $H := \pi^{-1}(H')$ of $\Spin^c(r)$. Noting that
\[
H'\cong (\SO(r_1) \times \U(1)) \times_{\U(1)} (\SO(r_2) \times \U(1)),
\]
we see that
\[
H \cong \Spin^c(r_1) \times_{\U(1)} \Spin^c(r_2).
\]

Let $P_1 \times_{\U(1)} P_2$ be the quotient of $P_1 \times P_2$ by the $\U(1)$-action given by
\[
z(p_1, p_2) = (p_1 z, p_2 z^{-1}),
\]
for $z \in \U(1)$ and $p_j \in P_j$. Define
\[
P := \bigl(P_1 \times_{\U(1)} P_2 \bigr) \times_H \Spin^c(r).
\]
Then we have naturally defined isomorphisms
\[
\begin{split}
P \times_{\Spin^c(r)} \R^r &\cong  \bigl(P_1 \times_{\U(1)} P_2 \bigr) \times_H (\R^{r_1} \oplus \R^{r_2}) \\
	&\cong \bigl(P_1 \times_{\Spin^c(r_1)} \R^{r_1} \bigr) \oplus \bigl(P_2 \times_{\Spin^c(r_2)} \R^{r_2} \bigr) \\
	&\cong E_1 \oplus E_2.
\end{split}
\]

The determinant line bundle of $P$ is
\[
\det(P) =  \bigl(P_1 \times_{\U(1)} P_2 \bigr) \times_H \C,
\]
where $H$ acts on $\C$ via the determinant homomorphism. Note that, for all $h = [h_1, h_2] \in \Spin^c(r_1) \times_{\U(1)} \Spin^c(r_2) \cong H$, we have $\det(h) = \det(h_1)\det(h_2)$. Using this equality, one can check that the map
\[
\bigl(P_1 \times_{\U(1)} P_2 \bigr) \times_H \C \to \bigl(P_1 \times_{\Spin^c(r_1)} \C \bigr) \otimes   \bigl(P_2 \times_{\Spin^c(r_2)} \C \bigr),
\]
given by
\[
[p_1, p_2, z] \mapsto [p_1, z] \otimes [p_2, 1],
\]
defines an isomorphism $\det(P) \cong \det(P_1) \otimes \det(P_2)$.
\end{proof}

\begin{lemma} \label{lem Spinc prod}
Let $G$ be a Lie group, acting on a smooth manifold $N$. Let $H < G$ be a closed subgroup, and consider the fibred product $M := G \times_H N$. Let $E^N \to N$ be an oriented $H$-vector bundle of rank $r$, equipped with an $H$-invariant metric. Then, as in Subsection \ref{sec preq}, we can form the $G$-vector bundle
\[
E^M := G \times_H E^N \to M.
\]

If $P^N \to N$ is an $H$-equivariant $\Spin^c$-structure on $E$, then $P^M := G \times_H P^N$ is a $G$-invariant $\Spin^c$-structure on $E^M$. If $L^{N} \to N$ is the determinant line bundle of $P^N$, then the determinant line bundle of $P^M$ is $G \times_H L^{N}$.
\end{lemma}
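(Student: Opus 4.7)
The plan is to build $P^M = G \times_H P^N$ as the obvious quotient of $G \times P^N$ by the diagonal $H$-action $h \cdot (g,p) = (gh^{-1}, h \cdot p)$, then verify the three required properties: it is a principal $\Spin^c(r)$-bundle over $M$, its associated $\mathbb{R}^r$-bundle recovers $E^M$ (with orientation and metric), and its determinant line bundle is $G \times_H L^N$. Throughout, the $G$-equivariance will be automatic from left multiplication on the $G$-factor.

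First I would put a right $\Spin^c(r)$-action on $P^M$ by $[g,p] \cdot a := [g, p \cdot a]$. This is well-defined because the $H$-action on $P^N$ is by $\Spin^c(r)$-bundle automorphisms, i.e.\ commutes with the right $\Spin^c(r)$-action, so the formula respects the equivalence relation $(g,p) \sim (gh^{-1}, h \cdot p)$. The projection $[g,p] \mapsto [g, \pi^N(p)]$ (where $\pi^N : P^N \to N$) is a well-defined map to $M$, and is $\Spin^c(r)$-invariant with fibres canonically isomorphic to those of $P^N$; local triviality follows by combining local trivialisations of $G \to G/H$ with local trivialisations of $P^N$.

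Next I would establish the key identification of associated bundles: for any left $\Spin^c(r)$-space $F$ on which $H$ acts compatibly (so that $H$ acts on $P^N \times_{\Spin^c(r)} F$), there is a canonical $G$-equivariant isomorphism
\[
P^M \times_{\Spin^c(r)} F \;\cong\; G \times_H \bigl( P^N \times_{\Spin^c(r)} F \bigr),
\]
given by $[[g,p],f] \mapsto [g, [p,f]]$. One checks well-definedness by chasing the two equivalence relations, and bijectivity is immediate from the explicit inverse. Applying this with $F = \mathbb{R}^r$ (and the standard $\Spin^c(r)$-action factoring through $\SO(r)$) yields
\[
P^M \times_{\Spin^c(r)} \mathbb{R}^r \;\cong\; G \times_H E^N \;=\; E^M,
\]
and since the $H$-action on $E^N$ preserves the metric and orientation, these structures descend and match those of $E^M$. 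This shows $P^M$ is a $\Spin^c$-structure on $E^M$. Applying the same identification with $F = \mathbb{C}$ and the $\Spin^c(r)$-action by the determinant character gives
\[
\det(P^M) \;=\; P^M \times_{\Spin^c(r)} \mathbb{C} \;\cong\; G \times_H \bigl( P^N \times_{\Spin^c(r)} \mathbb{C}\bigr) \;=\; G \times_H L^N.
\]

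I do not expect a genuine obstacle here; the whole argument is the general fact that associated bundle constructions commute, applied once. The only point that needs a little care is verifying that the right $\Spin^c(r)$-action and the left $H$-action on $P^N$ genuinely commute (this is part of what it means for $P^N$ to be an $H$-equivariant $\Spin^c$-structure), so that all the quotients make sense and yield principal bundles rather than merely topological quotients. Once that is noted, the verifications are routine diagram-chases.
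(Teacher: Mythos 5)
Your proposal is correct and follows essentially the same route as the paper: both arguments rest on the observation that the left $H$-action and the right $\Spin^c(r)$-action on $P^N$ commute, which makes the quotients well-defined and lets the two associated-bundle constructions be interchanged, yielding both the identification $P^M \times_{\Spin^c(r)} \R^r \cong E^M$ and $\det(P^M) \cong G \times_H L^N$. You have simply made explicit (local triviality, the general isomorphism with an arbitrary fibre $F$, checking the equivalence relations) the routine verifications that the paper leaves implicit.
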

\begin{proof}
The first claim is a direct consequence of the fact that the actions of $H$ and $\Spin^c(r)$ on $P^N$ commute. For the same reason, we have
\[
\begin{split}
\det(P^M) &= \bigl(G \times_H P^N \bigr) \times_{\Spin^c(r)} \C \\
	&= G \times_H \bigl(P^N \times_{\Spin^c(r)} \C \bigr) \\
	&= G \times_H L^{N}.
\end{split}
\]
\end{proof}

\subsubsection*{An induced $\Spin^c$-structure}

Let a $K$-equivariant $\Spin^c$-structure $P^N$ on $N$ be given. To construct a $G$-equivariant $\Spin^c$-structure on $M = G \times_K N$, we recall that, by Corollary \ref{cor TM},
\begin{equation}\label{eq deco TM}
TM \cong (p_{G/K}^* T(G/K)) \oplus (G \times_K TN),
\end{equation}
with $p_{G/K}: M \to G/K$ the natural projection. As in Subsection \ref{Laff}, we assume that the homomorphism $\Ad: K \to \SO(\p)$ lifts to a homomorphism $\widetilde{\Ad}: K \to \Spin(\p)$. Then $G/K$ carries the natural $\Spin$-structure
\[
P^{G/K} := G \times_K \Spin(\p),
\]
where $K$ acts on $\Spin(\p)$ via $\widetilde{\Ad}$.
\begin{lemma}
The principal $\Spin^c(\p)$-bundle
\[
P^{G/K}_M := G \times_K(N \times \Spin^c(\p)) \to M
\]
defines a $\Spin^c$-structure on $p_{G/K}^* T(G/K)$. Its determinant line bundle is trivial.
\end{lemma}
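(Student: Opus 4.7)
The approach is to recognise $P^{G/K}_M$ as the pullback along $p_{G/K}:M \to G/K$ of the $\Spin^c$-structure on $G/K$ obtained by extending the canonical $\Spin$-structure $P^{G/K} = G \times_K \Spin(\p)$ along the inclusion $\Spin(\p) \hookrightarrow \Spin^c(\p)$. Explicitly, the $\Spin^c$-extension of $P^{G/K}$ is $P^{G/K,c} := G \times_K \Spin^c(\p)$, where $K$ acts on $\Spin^c(\p)$ via $\widetilde{\Ad}$ composed with this inclusion. Since $M = G \times_K N$, the fibred product $M \times_{G/K} P^{G/K,c}$ consists of pairs $\bigl([g,n],[g',a]\bigr)$ with $gK = g'K$, and standard bookkeeping (choose the representative with $g' = g$) identifies this pullback with $G \times_K(N \times \Spin^c(\p)) = P^{G/K}_M$.

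Next I would verify the two claims. For the first, the associated vector bundle is
\[
P^{G/K}_M \times_{\Spin^c(\p)} \p \;\cong\; G\times_K\bigl(N \times (\Spin^c(\p) \times_{\Spin^c(\p)} \p) \bigr) \;\cong\; G \times_K(N \times \p),
\]
with $K$ acting on $\p$ via $\Ad$. On the other hand, $T(G/K) = G \times_K \p$, so $p_{G/K}^*T(G/K) = G \times_K(N \times \p)$ as well. This gives an isomorphism of the associated vector bundle with $p_{G/K}^*T(G/K)$, so $P^{G/K}_M$ is a $\Spin^c$-structure on that bundle.

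For the determinant, recall $\det:\Spin^c(\p)\to \U(1)$ is the homomorphism $[a,z]\mapsto z^2$, which vanishes on the image of $\Spin(\p) \hookrightarrow \Spin^c(\p)$, where $z=1$. Therefore
\[
\det(P^{G/K}_M) \;=\; P^{G/K}_M \times_{\Spin^c(\p)} \C \;\cong\; G\times_K(N \times \C),
\]
with $K$ acting on $\C$ by $\det\circ \widetilde{\Ad}$. But $\widetilde{\Ad}(K) \subset \Spin(\p)$ and $\det$ is trivial on $\Spin(\p)$, so the $K$-action on $\C$ is trivial, yielding $\det(P^{G/K}_M) \cong M \times \C$, which is the trivial line bundle.

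The only real subtlety is bookkeeping with quotients (making sure the $K$- and $\Spin^c(\p)$-actions are distinguished and that the identification of the pullback with the fibred product is carried out unambiguously); conceptually everything follows from Lemma~\ref{lem Spinc prod} applied to the $\Spin^c$-extension of $P^{G/K}$ together with the (trivial) computation of $\det$ on the image of $\Spin(\p)$ inside $\Spin^c(\p)$.
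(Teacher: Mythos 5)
Your proof is correct and takes essentially the same approach as the paper: you verify that the $\Spin^c(\p)$-associated vector bundle is $G \times_K (N \times \p) \cong p_{G/K}^*T(G/K)$, and you note that $\det$ vanishes on $\Spin(\p) \supset \widetilde{\Ad}(K)$, which forces $\det\bigl(P^{G/K}_M\bigr) \cong G \times_K(N\times\C) \cong M \times \C$. The opening paragraph identifying $P^{G/K}_M$ as the pullback of the $\Spin^c$-extension of $P^{G/K}$ is a helpful conceptual gloss that the paper omits, but the two core verifications are the same.
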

\begin{proof}
We have
\[
\begin{split}
G \times_K(N \times \Spin^c(\p)) \times_{\Spin^c(\p)} \p &\cong G \times_K(N \times \p) \\
	&\cong p_{G/K}^*(G \times_K \p) \\
	&\cong p_{G/K}^* T(G/K).
\end{split}
\]

Note that the determinant homomorphism is trivial on the subgroup $\Spin(\p) < \Spin^c(\p)$, and that $\widetilde{\Ad}(K) < \Spin(\p)$. Therefore, the action of $K$ on $\C$, given by the composition
\[
K \xrightarrow{\widetilde{\Ad}} \Spin(\p) \hookrightarrow \Spin^c(\p) \xrightarrow{\det} \U(1),
\]
is trivial. We conclude that
\[
\det\bigl(P^{G/K}_M \bigr) \cong G \times_K(N \times \C) \cong M \times \C,
\]
as claimed.
\end{proof}

Using the decomposition \eqref{eq deco TM} of $TM$, and the constructions from Lemmas \ref{lem Spinc sum} and \ref{lem Spinc prod}, we now obtain a $\Spin^c$-structure $P^M \to M$ on $M$, from the $\Spin^c$-structures $P^{G/K}_M \to M$ and $P^N \to N$. Explicitly,
\[
P^M := \bigl(G \times_K(N \times \Spin^c(\p)) \bigr) \times_{\U(1)} \bigl(G \times_K P^N\bigr) \times_H \Spin^c(d_M).
\]

By Lemmas  \ref{lem Spinc sum} and \ref{lem Spinc prod}, and by triviality of $\det\bigl(P^{G/K}_M \bigr)$, we see that the determinant line bundle of $P^M$ equals
\[
\det\bigl(P^M\bigr) = G \times_K \det\bigl(P^N\bigr).
\]
In particular, if the determinant line bundle of $P^N$ is a $\Spin^c$-prequantum line bundle $L^{2\nu} \to N$, then
\begin{equation} \label{eq det PM}
\det\bigl(P^M\bigr) = G \times_K L^{2\nu} = {L^{\omega}}
\end{equation}
is the $\Spin^c$-prequantum line bundle on $M$ constructed in Subsection \ref{sec preq}.

\section{Quantisation commutes with induction} \label{[Q,I]=0}

Our proof that quantisation commutes with reduction for semisimple groups is a reduction to the case of compact groups.
This reduction is possible because of the `quantisation commutes with \emph{induction}' result in this section
(Theorem \ref{thm [Q,I]=0}). It is analogous to Theorem 7.5 from \cite{Paradan1}. After stating this result, we show how,
 together with the quantisation commutes with reduction result for the compact case, it implies Theorem \ref{GSss}.
 Our proof that quantisation commutes with induction is based on  naturality of the assembly map for the inclusion $K \hookrightarrow G$ (Theorem \ref{natKG}).
 This proof is outlined in Subsection \ref{sec outline}, with details given in Sections \ref{natincl} and \ref{Dirac}.

\subsection{The sets $\CHPS(G)$ and $\CHPS(K)$}

We first restate the results of Sections \ref{cpt <-> ncpt} and \ref{sec preqSpin}  in a way that will allow us to draw a
`quantisation commutes with induction' diagram.
\begin{definition} \label{def HamP}
The set $\HP(G)$\index{ha@$\HP(G)$} of \emph{Ham}iltonian $G$-actions with momentum map values in the \emph{s}trongly \emph{e}lliptic set, with $\Spin^c$-\emph{p}requantisations, consists of classes of sextuples $(M, \omega, \Phi^M, {L^{2\omega}}, (\relbar, \relbar)_{{L^{2\omega}}}, \nabla^M)$, where
\begin{itemize}
\item $(M, \omega)$ is a symplectic manifold, equipped with a symplectic $G$-action;
\item $\Phi^M: M \to \g^*$ is a momentum map for this action, and $\Phi^M(M) \subset \gse$;
\item $\bigl({L^{2\omega}}, (\relbar, \relbar)_{{L^{2\omega}}}. \nabla^M \bigr)$ is a $G$-equivariant $\Spin^c$-quantisation of $(M, \omega)$.
\end{itemize}
Two classes $[M, \omega, \Phi^M, {L^{2\omega}}, (\relbar, \relbar)_{{L^{2\omega}}}, \nabla^M]$ and $[M', \omega', \Phi^{M'}, L^{2\omega'}, (\relbar, \relbar)_{L^{2\omega'}}, \nabla^{M'}]$
of such sextuples are identified
if there is an equivariant symplectomorphism $\varphi: M \to M'$ such that $\varphi^*\Phi^{M'} = \Phi^M$,  $\varphi^*L^{2\omega'} = {L^{2\omega}}$ and
$\varphi^*(\relbar, \relbar)_{L^{2\omega'}} = (\relbar, \relbar)_{{L^{2\omega}}}$. We do not require $\varphi$ to relate the connections $\nabla^M$ and $\nabla^{M'}$ to each other. For the purpose of quantisation, it is enough that it relates their curvatures by $\varphi^* R_{\nabla^{M'}} = R_{\nabla^M}$, which follows from the facts that $\varphi$ is a symplectomorphism, and that $\nabla^M$ and $\nabla^{M'}$ are prequantum connections.

Analogously, $\HP(K)$ is the set of classes $[N, \nu, \Phi^N, {L^{2\nu}}, (\relbar, \relbar)_{{L^{2\nu}}}, \nabla^N]$, where $(N, \nu)$ is a Hamiltonian $K$-manifold, with momentum map $\Phi^N$, with image in $\kse$, and $({L^{2\nu}}, (\relbar, \relbar)_{{L^{2\nu}}}, \nabla^N)$ is a $K$-equivariant $\Spin^c$-prequantisation of $(N, \nu)$. The equivalence relation between these classes is the same as before.
\end{definition}

Using this definition, we can summarise the results of Subsections \ref{sec ind}, \ref{sec res}, \ref{sec inv} and \ref{sec preq} as follows:
\begin{theorem}
There are well-defined maps
\[
\HInd_K^G: \HP(K) \to \HP(G)
\]
and\index{haa@$\HCross_K^G$}
\[
\HCross^G_K: \HP(G) \to \HP(K),
\]
given by
\[
\HInd_K^G[N, \nu, \Phi^N, {L^{2\nu}}, (\relbar, \relbar)_{{L^{2\nu}}}, \nabla^N] = [M, \omega, \Phi^M, {L^{2\omega}}, (\relbar, \relbar)_{{L^{2\omega}}},  \nabla^M]
\]
as in Subsections \ref{sec ind} and \ref{sec preq}, and
\[
\HCross^G_K[M, \omega, \Phi^M, {L^{2\omega}}, (\relbar, \relbar)_{{L^{2\omega}}},  \nabla^M] = [N, \nu, \Phi^N, {L^{2\nu}}, (\relbar, \relbar)_{{L^{2\nu}}}, \nabla^N]
\]
as in Subsections \ref{sec res} and \ref{sec preq}. They are each other's inverses.
\end{theorem}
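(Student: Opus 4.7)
The proof is essentially an assembly of the material developed throughout Sections \ref{cpt <-> ncpt} and \ref{sec preqSpin}, so my plan is to verify that each component of the sextuples is handled by a result already at our disposal, and that the constructions respect the equivalence relation defining $\HP(G)$ and $\HP(K)$.

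First, I would check that $\HInd_K^G$ is well-defined as a map of sets. Given a representative $(N,\nu,\Phi^N,L^{2\nu},(\relbar,\relbar)_{L^{2\nu}},\nabla^N)$ of a class in $\HP(K)$, the triple $(M,\omega,\Phi^M)=(G\times_K N,\omega,\Phi^M)$ is a Hamiltonian $G$-manifold by Propositions \ref{prop omega sympl} and \ref{prop Phi mom}, with $\Phi^M(M)\subset\gse$ by the formula \eqref{eq Phi}. The induced prequantum datum $(L^{2\omega},(\relbar,\relbar)_{L^{2\omega}},\nabla^M)$ is constructed in Subsection \ref{sec preq}, and Lemma \ref{lem curv nabla} together with the definition of $\nabla^M$ via $\psi_L$ ensures the curvature equals $2\pi i\,(2\omega)$. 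To see that this descends to equivalence classes, note that any $K$-equivariant symplectomorphism $\varphi_N:N\to N'$ intertwining the momentum maps, line bundles and metrics lifts to a $G$-equivariant map $G\times_K\varphi_N:G\times_K N\to G\times_K N'$, under which all structures transport by naturality of the fibred product construction.

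Next, for $\HCross^G_K$ I would invoke Proposition \ref{prop Hamres} for the symplectic and momentum-map data, and the simple restriction procedure of Subsection \ref{sec preq} for the prequantum data. Well-definedness on equivalence classes follows because any $G$-equivariant symplectomorphism $\varphi_M:M\to M'$ sending $\Phi^M$ to $\Phi^{M'}$ must carry $(\Phi^M)^{-1}(\kk^*)$ to $(\Phi^{M'})^{-1}(\kk^*)$, hence restricts to a $K$-equivariant symplectomorphism intertwining the restricted momentum maps, line bundles, metrics, and curvatures.

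For the inverse relations, the symplectic and momentum map parts are exactly Propositions \ref{prop Ind Res inv} and \ref{prop Res Ind inv}. It then remains to propagate these isomorphisms to the prequantum line bundles. This is handled by Lemma \ref{lem lbindres}: part (i) shows that for $\HCross^G_K\circ\HInd_K^G$, the restriction of $G\times_K L^{2\nu}$ to $\widetilde N$ is canonically $L^{2\nu}$, and this isomorphism evidently preserves the Hermitian metric by construction of $(\relbar,\relbar)_{L^{2\omega}}$. Part (ii), applied with $E^M=L^{2\omega}$ and the submanifold $N=(\Phi^M)^{-1}(\kk^*)$, shows that for $\HInd_K^G\circ\HCross^G_K$, the pullback $\varphi^*L^{2\omega}$ is canonically $G\times_K L^{2\nu}$, again preserving the metric.

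The only genuine subtlety is what the equivalence relation does \emph{not} require: it does not ask the symplectomorphism to intertwine the connections themselves, only the curvatures. This is fortunate, because the connection $\nabla^M$ constructed in Subsection \ref{sec preq} is not literally recovered by first restricting and then inducing a given $\nabla^M$ on $M$. However, both resulting connections are Hermitian with curvature $2\pi i\,(2\omega)$ on the same line bundle $L^{2\omega}$ with the same metric, so the sextuples they define are equivalent in the sense of Definition \ref{def HamP}. Verifying this compatibility of curvatures—rather than connections—is the main bookkeeping obstacle; once one concedes to work up to this equivalence, the theorem follows by stringing together the cited propositions and lemmas.
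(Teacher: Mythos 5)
Your proposal is correct and follows essentially the same route as the paper, which presents the theorem as a summary of the results of Subsections \ref{sec ind}, \ref{sec res}, \ref{sec inv} and \ref{sec preq} (Propositions \ref{prop omega sympl}, \ref{prop Phi mom}, \ref{prop Hamres}, \ref{prop Ind Res inv}, \ref{prop Res Ind inv} and Lemma \ref{lem lbindres}), with the same observation—built into Definition \ref{def HamP} and the remark after Lemma \ref{lem lbindres}—that connections need only be matched at the level of curvature. Your added check that the constructions descend to equivalence classes is a sensible point to spell out, and it is handled exactly as you describe.
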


To state our `quantisation commutes with reduction' result,
we need slightly different sets from $\HP(G)$ and $\HP(K)$. For these sets we only have an induction map,
and we do not know if it is possible to define a suitable cross-section map.
\begin{definition} \label{def CHPS}
The set $\CHPS(G)$\index{caad@$\CHPS$} of \emph{c}ocompact \emph{Ham}iltonian $G$-actions on complete manifolds, with momentum map values in the \emph{s}trongly \emph{e}lliptic set,
 with $\Spin^c$-\emph{p}re\-quan\-ti\-sa\-tions and $\Spin^c$-structures, consists of classes of septuples
$(M, \omega, \Phi^M, {L^{2\omega}}, (\relbar, \relbar)_{{L^{2\omega}}}, \nabla^M, P^M)$,
 with $(M, \omega, \Phi^M, {L^{2\omega}}, (\relbar, \relbar)_{{L^{2\omega}}}, \nabla^M)$ as in Definition \ref{def HamP}, $M/G$ compact, and $P^M$ a $G$-equivariant  $\Spin^c$-structure on $M$, such that
 \begin{itemize}
 \item $M$ is complete in the Riemannian metric induced by $P^M$;
 \item the determinant line bundle of $P^M$ is isomorphic to $L^{2\omega}$.
 \end{itemize}

The equivalence relation is the same as in Definition \ref{def HamP}. There is no need to incorporate the $\Spin^c$-structures into this equivalence relation, besides the condition on the determinant line bundles of these structures that is already present.

The set $\CHPS(K)$ is defined analogously. In this case, the condition that $N/K$ is compact is equivalent to compactness of $N$.
\end{definition}
For these sets, we have the induction map
\begin{equation} \label{eq HInd}
\HInd_K^G: \CHPS(K) \to \CHPS(G),
\end{equation}
with \index{haa@$\HInd_K^G$}
\[
\HInd_K^G[N, \nu, \Phi^N, {L^{2\nu}}, (\relbar, \relbar)_{{L^{2\nu}}}, \nabla^N, P^N] =
	[M, \omega, \Phi^M, {L^{2\omega}}, (\relbar, \relbar)_{{L^{2\omega}}}, \nabla^M, P^M],
\]
as defined in Subsections \ref{sec ind}, \ref{sec preq} and \ref{sec Spinc}.

\subsection{Quantisation commutes with induction}

Consider an element $[M, \omega, \Phi^M, {L^{2\omega}}, (\relbar, \relbar)_{{L^{2\omega}}}, \nabla^M, P^M] \in \CHPS(G)$.
Using a connection on the spinor bundle associated to $P^M$, we can define the $\Spin^c$-Dirac operator $\D_M^{{L^{2\omega}}}$ on $M$, as in Subsection \ref{sec quant Spinc}. In Definition \ref{def quant VI}, we defined the quantisation of the action of $G$ on $(M, \omega)$ as the image of the $K$-homology class of $\D_{M}^{{L^{2\omega}}}$ under the analytic assembly map:
\[
Q_{\Spin}^G(M, \omega) = \mu_M^G\bigl[\D_{M}^{{L^{2\omega}}} \bigr].
\]
as we noted before, this definition does not depend on the choice of connection on the spinor bundle.
\begin{definition}
The quantisation map\index{qaah@$Q_G$, $Q_K$}
\[
Q_{\Spin}^G: \CHPS(G) \to K_0(C^*_r(G))
\]
is defined by
\[
Q_{\Spin}^G[M, \omega, \Phi^M, {L^{2\omega}}, (\relbar, \relbar)_{{L^{2\omega}}}, \nabla^M, P^M] = \mu_M^G\bigl[{\D}_M^{{L^{2\omega}}} \bigr].
\]

Analogously, we have the quantisation map
\[
Q_{\Spin}^K: \CHPS(K) \to K_0(C^*_rK)
\]
given by
\[
Q_{\Spin}^K[N, \nu, \Phi^N, {L^{2\nu}}, (\relbar, \relbar)_{{L^{2\nu}}}, \nabla^N, P^N] = \mu_N^K\bigl[{\D}_N^{{L^{2\nu}}} \bigr],
\]
which corresponds to $\Kind \, {\D}_N^{{L^{2\nu}}} \in R(K)$.
\end{definition}

Using the Dirac induction map \eqref{eq DInd} and the Hamiltonian induction map \eqref{eq HInd}, we can now state the following result:
\begin{theorem}[Quantisation commutes with induction] \label{thm [Q,I]=0} \index{quantisation commutes with induction}

\noindent
The following diagram commutes:
\begin{equation} \label{diag [Q,I]}
\xymatrix{
\CHPS(G) \ar[r]^-{Q_{\Spin}^G} & K_0(C^*_r(G)) \\
\CHPS(K) \ar[r]^-{Q_{\Spin}^K} \ar[u]^{\HInd_K^G} & R(K). \ar[u]_{\DInd_K^G}
}
\end{equation}
\end{theorem}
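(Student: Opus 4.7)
Unpacking the diagram, the theorem asserts that, for $[N,\nu,\Phi^N,L^{2\nu},(\cdot,\cdot)_{L^{2\nu}},\nabla^N,P^N] \in \CHPS(K)$ with induced data $[M,\omega,\Phi^M,L^{2\omega},\ldots,P^M] = \HInd_K^G[N,\ldots]$,
\[
\mu_M^G\bigl[\D_M^{L^{2\omega}}\bigr] \;=\; \DInd_K^G\bigl(\Kind\,\D_N^{L^{2\nu}}\bigr) \quad\text{in } K_0(C^*_r G).
\]
Since $N$ is compact, the right-hand side factors as $\DInd_K^G \circ \mu_N^K[\D_N^{L^{2\nu}}]$. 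My strategy is therefore to factor through the $G$-equivariant $K$-homology of $M$: first invoke Theorem \ref{natKG} (naturality of the assembly map for $K \hookrightarrow G$) to get a commutative square relating $\mu_M^G$ and $\mu_N^K$ through a $K$-homology induction map $\KInd_K^G$, and then identify $[\D_M^{L^{2\omega}}] = \KInd_K^G[\D_N^{L^{2\nu}}]$ in $K_0^G(M)$.

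The first step reduces the theorem to a statement at the level of $K$-homology classes of Dirac operators, once one knows that Theorem \ref{natKG} produces the induction $\KInd_K^G: K_0^K(N) \to K_0^G(M)$ with $\mu_M^G \circ \KInd_K^G = \DInd_K^G \circ \mu_N^K$. Applying both sides to $[\D_N^{L^{2\nu}}]$ and using that $\mu_N^K[\D_N^{L^{2\nu}}] = \Kind\,\D_N^{L^{2\nu}}$ (since $N$ is compact, so the assembly map is the equivariant index), the theorem will follow from the single equality
\[
\KInd_K^G[\D_N^{L^{2\nu}}] = [\D_M^{L^{2\omega}}] \quad\text{in } K_0^G(M).
\]

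The second step is the geometric heart of the argument. By Corollary \ref{cor TM} we have $TM \cong p_{G/K}^*T(G/K) \oplus (G \times_K TN)$, and the construction in Section \ref{sec Spinc}, built from Lemmas \ref{lem Spinc sum} and \ref{lem Spinc prod}, produces $P^M$ as the corresponding ``sum'' of the canonical $\Spin$-structure $P^{G/K}_M$ along the $\p$-direction and the induced $\Spin^c$-structure $G \times_K P^N$ along the $N$-direction; by \eqref{eq det PM}, the determinant line bundle matches $L^{2\omega}$. Consequently the spinor bundle on $M$ is $G$-equivariantly isomorphic to $G\times_K(\Delta_{d_\p}\otimes\SSS^N)$, and one may choose the Clifford connection so that $\D_M^{L^{2\omega}}$ takes the product form of the operator $\sum_j X_j \otimes c(X_j) \otimes 1$ (as in \eqref{DV}) along the $G/K$-direction, plus $1\otimes\D_N^{L^{2\nu}}$ along the fibres, modulo a zero-order curvature term coming from the twist by $\widetilde{\Ad}$ in \eqref{Kact}.

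The main obstacle is to show that this geometric ``sum'' really represents the Kasparov product underlying $\KInd_K^G$, so that $[\D_M^{L^{2\omega}}]$ equals $\KInd_K^G[\D_N^{L^{2\nu}}]$ rather than merely being a sensible candidate. I would verify this by: (a) matching gradings and Clifford actions under the spinor bundle decomposition above; (b) observing that two $G$-equivariant Clifford connections on $\SSS^M$ differ by a bounded zero-order perturbation and hence define the same $K$-homology class; and (c) checking the sufficient conditions for a product representative (positivity modulo compacts and a connection-type compatibility), so that $[\D_M^{L^{2\omega}}]$ is the image of $[\D_N^{L^{2\nu}}]$ under $\KInd_K^G$. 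The rigorous execution of (a)--(c), together with the careful construction of $\KInd_K^G$ and the proof of naturality, is exactly what is carried out in Sections \ref{natincl} and \ref{Dirac}.
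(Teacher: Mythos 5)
Your proposal follows exactly the paper's strategy: reduce Theorem \ref{thm [Q,I]=0} to the naturality statement Theorem \ref{natKG} together with the identification $\KInd_K^G[\D_N^{L^{2\nu}}] = [\D_M^{L^{2\omega}}]$ (the paper's Proposition \ref{alphaDirac}), and then establish the latter by decomposing $\SSS^M$, comparing the Dirac operator on $M$ to a product-type operator, and arguing that zero-order differences do not change the $K$-homology class. The only minor stylistic divergence is that the paper sidesteps your step (c) — verifying Kasparov product conditions directly for $\D_M^{L^{2\omega}}$ — by introducing the auxiliary operator $\tilde\D_M^{L^{2\omega}}$ whose class is the product by construction (via Theorem 10.8.7 of \cite{HR}), and then checking that it has the same principal symbol as $\D_M^{L^{2\omega}}$; this is equivalent in substance to your (a)--(c).
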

This is the central result of this paper. We will outline its proof in Subsection \ref{sec outline}, and fill in the details in Sections \ref{natincl} and  \ref{Dirac}.

\subsection{Corollary: $[Q,R]=0$ for semisimple groups} \label{sec [Q,I] = [Q,R]}

As announced, we derive Theorem \ref{GSss} from Theorem \ref{thm [Q,I]=0} and the fact that $\Spin^c$-quantisation commutes with reduction in the compact case (Theorem \ref{thm [Q,R]=0 cpt spin}).

\medskip
\noindent \emph{Proof of Theorem \ref{GSss}.}
Let $G$, $K$, $(M, \omega)$, $\Phi^M=\Phi$, ${L^{2\omega}}=L$, $(\relbar, \relbar)_{{L^{2\omega}}}=(\relbar, \relbar)_{L}$ and $\nabla^M=\nabla$  be as in Theorem \ref{GSss}.
Set
\[
(N, \nu, \Phi^N, {L^{2\nu}}, (\relbar, \relbar)_{{L^{2\nu}}}, \nabla^N) := \HCross^G_K(M, \omega, \Phi^M, {L^{2\omega}}, (\relbar, \relbar)_{{L^{2\omega}}}, \nabla^M).
\]
Let $P^N \to N$ be a $K$-equivariant $\Spin^c$-structure on $N$, with determinant line bundle ${L^{2\nu}}$. Let $P^M \to M$ be the induced $\Spin^c$-structure on $M$, as described in Subsection \ref{sec Spinc}. Since the determinant line bundle of $P^M$ is ${L^{2\omega}}$, by \eqref{eq det PM} and part \emph{(ii)} of Lemma \ref{lem lbindres}, we have the elements
\[
\begin{split}
[N, \nu, \Phi^N, {L^{2\nu}}, (\relbar, \relbar)_{{L^{2\nu}}}, \nabla^N, P^N] &\in \CHPS(K); \\
[M, \omega, \Phi^M, {L^{2\omega}}, (\relbar, \relbar)_{{L^{2\omega}}}, \nabla^M, P^M] &\in \CHPS(G).
\end{split}
\]
By Proposition \ref{prop Ind Res inv}, we have
\[
\HInd_K^G[N, \nu, \Phi^N, {L^{2\nu}}, (\relbar, \relbar)_{{L^{2\nu}}}, \nabla^N, P^N] =
	[M, \omega, \Phi^M, {L^{2\omega}}, (\relbar, \relbar)_{{L^{2\omega}}}, \nabla^M, P^M].
\]

Now let $\HH$ and $\lambda$ be as in Theorem \ref{GSss}. Then by Theorem \ref{thm [Q,I]=0} and Lemma \ref{lem Laff}, and the fact that the assembly map is the regular index in teh compact case, we have
\[
\begin{split}
R^{\HH}_G \circ \mu_M^G \bigl[{\D}_M^{{L^{2\omega}}} \bigr] &= R^{\HH}_G \circ \DInd_K^G (\Kind \, {\D}_N^{{L^{2\nu}}}) \\
	&= (-1)^{\dim G/K} [\Kind \, {\D}_N^{{L^{2\nu}}} : V_{\lambda-\rho_c}].
\end{split}
\]
Because $\Spin^c$-quantisation commutes with reduction for the action of $K$ on $N$ (Theorem \ref{thm [Q,R]=0 cpt spin}), we have
\[
[\Kind \, {\D}_N^{{L^{2\nu}}} : V_{\lambda-\rho_c}] = Q_{\Spin}\bigl(N_{\lambda}, \omega_{\lambda}\bigr)
\]
if $-i\lambda \in \Phi^N(N)$, and zero otherwise.
Recall that $N = \bigl(\Phi^M\bigr)^{-1}(\kk^*)$, so that $-i\lambda \in \Phi^N(N)$ if and only if $-i\lambda \in \Phi^M(M)$.
Furthermore, note that $G_{\nu} \subset K$ for all $\nu \in \torus^*_{+} \setminus \ncw$, so that $G_{\nu}=K_{\nu}$ for such $\nu$.
Therefore $N_{\lambda} = M_{\lambda}$,
which completes the proof.
\hfill $\square$

\subsection{Outline of the proof} \label{sec outline}

The most important ingredient of the proof of Theorem \ref{thm [Q,I]=0} is Theorem \ref{natKG}, `naturality of the assembly map for the inclusion of $K$ into $G$'.

As before, let $K < G$ be a maximal compact subgroup. Let $N$ be a smooth manifold, equipped with a  $K$-action. Let $M := G \times_K N$ be the quotient of $G \times N$ by the $K$-action given by
\[
k\cdot (g,n) = (gk^{-1}, kn),
\]
for $k \in K$, $g \in G$ and $n \in N$. Because this action is proper and free, $M$ is a smooth manifold. Left multiplication on the factor $G$ induces an action of $G$ on $M$.
\begin{theorem}[Naturality of the assembly map for $K \hookrightarrow G$] \label{natKG} \index{naturality of the assembly map!for $K \hookrightarrow G$}
The map $\KInd_K^G$, defined by commutativity of the left hand side of diagram \eqref{diagram}, makes the following diagram commutative:
\begin{equation} \label{diag natKG}
\xymatrix{K_0^G(M) \ar[r]^{\mu_M^G} & K_0(C^*_r(G)) \\
K_0^K(N) \ar[r]^{\mu_N^K} \ar[u]^{\KInd_K^G} & R(K). \ar[u]_{\DInd_K^G}
}
\end{equation}
\end{theorem}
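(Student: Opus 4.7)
The plan is to prove Theorem \ref{natKG} by working at the level of Kasparov cycles and exploiting the fact that all four maps in diagram \eqref{diag natKG} admit natural descriptions in $KK$-theory. By \eqref{IGK}, the map $\DInd_K^G$ is essentially the Kasparov product with the class of the Spin-Dirac operator on $G/K$ from \eqref{DV}. My strategy is to realise $\KInd_K^G$ as an analogous Kasparov-type construction: an external tensor product of a given $K$-equivariant cycle on $N$ with a Dirac cycle along the $G/K$-direction, descended to $G$-equivariant $K$-homology of $M$. Commutativity of \eqref{diag natKG} will then reduce to associativity of the Kasparov product.

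Concretely, given a $K$-equivariant Kasparov cycle $(\HH,F)$ over $C_0(N)$ representing a class in $K_0^K(N)$, I would construct an induced $G$-equivariant cycle over $C_0(M)$ with underlying Hilbert space
\[
\bigl(L^2(G) \otimes \Delta_{d_{\p}} \otimes \HH\bigr)^K,
\]
where $K$ acts as in \eqref{Kact} combined with its given action on $\HH$, and with operator obtained by adding a suitable transform of the Dirac operator $\D^{\C}$ from \eqref{DV} along the $G/K$-factor and the operator $F$ along the $N$-factor. In view of Corollary \ref{cor TM}, this is the natural twisted-product model of a Dirac-type operator on the fibre bundle $M \to G/K$ with fibre $N$; the completeness condition built into the applications (Definition \ref{def CHPS}) will ensure essential self-adjointness when this construction is applied to genuine Dirac cycles. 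This prescription gives a concrete formula for $\KInd_K^G[\HH,F] \in K_0^G(M)$.

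Applying $\mu_M^G$ to the resulting class produces a Hilbert $C^*_r(G)$-module built from $\bigl(C^*_r(G) \otimes \Delta_{d_{\p}} \otimes \HH\bigr)^K$ carrying the analogous sum of operators. On the other hand, $\DInd_K^G \circ \mu_N^K$ applied to $[\HH,F]$ factors through $K_0(C^*_rK)$ and, by \eqref{IGK}, yields the very same Hilbert module and operator up to unitary identification: taking the exterior Kasparov product with the Dirac class of $G/K$ commutes with $K$-equivariant assembly of $(\HH,F)$, which is the version of naturality-for-inclusions discussed in \cite{MV}. The main obstacle, I expect, is the analytic bookkeeping needed to combine the unbounded operators $\D^{\C}$ and $F$ into a single well-defined Kasparov cycle on a proper cocompact $G$-manifold, and to check at the level of cycles — rather than merely in $KK$ — that this cycle represents both sides of \eqref{diag natKG}. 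Once these technicalities are handled, commutativity follows from associativity of the Kasparov product; this is the content filled in in Section \ref{Dirac}.
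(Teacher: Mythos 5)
Your proposal correctly identifies the overall shape of the argument: $\KInd_K^G$ is built from an external Kasparov product with the Dirac class $[\D_{G,K}]$ on $G$, and the target is to intertwine this with the assembly maps $\mu_N^K$ and $\mu_M^G$. However, the central claim — that once the analytic bookkeeping is done, commutativity of \eqref{diag natKG} ``follows from associativity of the Kasparov product'' — misidentifies the essential content. Associativity of the Kasparov product is not the issue. The paper's proof decomposes \eqref{diag natKG} into the four-level diagram \eqref{diagram}, and each subdiagram encodes a \emph{distinct} nontrivial compatibility of the assembly map: the bottom square is \emph{multiplicativity} of $\mu$ with respect to external Kasparov products (Theorem \ref{multass}, whose proof occupies all of Subsection \ref{sec bottom} and involves a genuinely delicate comparison of unbounded Kasparov cycles, domains, and closures); the middle square is compatibility with \emph{restriction to subgroups}, which on the $K$-theory side is not even obviously defined and requires the K\"unneth formula (Corollary 0.2 of \cite{CEO}) just to make sense of $\Res^{G\times K\times K}_{G\times\Delta(K)}$ on $K_0(C^*_r(\cdot))$; the top square is \emph{naturality for epimorphisms} in the sense of Valette's appendix to \cite{MV}; and the right square is a decomposition of $\DInd_K^G$ that itself uses the other three squares with $N$ a point. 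None of these reduces to associativity.

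Two further imprecisions are worth flagging. First, you cite \cite{MV} for the statement ``exterior Kasparov product with the Dirac class of $G/K$ commutes with $K$-equivariant assembly''; this is not what \cite{MV} proves. What \cite{MV} provides (and what the paper uses in Subsection \ref{sec top}) is naturality of the assembly map for \emph{epimorphisms} $G \twoheadrightarrow G/K$ with $K$ normal; this is a tool in the argument, not the product-compatibility you are invoking, and it is striking — as the paper itself notes — that naturality for the monomorphism $K \hookrightarrow G$ is proved \emph{via} the epimorphism case applied to $G \times K \twoheadrightarrow G$. Second, your explicit model $(L^2(G)\otimes\Delta_{d_{\p}}\otimes\HH)^K$ for the induced cycle is not what defines $\KInd_K^G$: the theorem statement fixes the definition by commutativity of the left column of \eqref{diagram} (product with $[\D_{G,K}]$, restriction to $G\times\Delta(K)$, then the Valette map $V_{\Delta(K)}$), and the group change from $G\times K\times K$ through $G\times\Delta(K)$ down to $G$ is precisely where the nontrivial content lives. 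Your concrete description is closer to what one obtains \emph{after} composing with $\mu_M^G$, and to the $\tilde\D_M^{L^{2\omega}}$ construction of Section \ref{Dirac} — but that section is devoted to Proposition \ref{alphaDirac} (that $\KInd_K^G$ sends the Dirac class on $N$ to the Dirac class on $M$), which is a separate statement from Theorem \ref{natKG}. In short, the plan needs to be supplemented with proofs of Theorem \ref{multass}, of the K\"unneth-based restriction step, and of the right-hand decomposition of $\DInd_K^G$, none of which are subsumed under ``analytic bookkeeping'' or associativity.
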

This result is analogous to Theorem 4.1 from \cite{Atiyah}, which is used by Paradan in \cite{Paradan1} to reduce
the Guillemin--Sternberg conjecture for compact groups to certain subgroups. Our proof of Theorem \ref{GSss} is analogous to this part of Paradan's work.

The reason why Theorem \ref{natKG} helps us to prove Theorem \ref{thm [Q,I]=0} is the fact that the map
$\KInd_K^G$ that appears in Theorem \ref{natKG} relates the Dirac operators ${\D}_N^{{L^{2\nu}}}$ and ${\D}_M^{{L^{2\omega}}}$ to each other:
\begin{proposition} \label{alphaDirac}
The map $\KInd_K^G$ maps the $K$-homology class of the operator ${\D}_N^{{L^{2\nu}}}$ to the class of ${\D}_M^{{L^{2\omega}}}$.
\end{proposition}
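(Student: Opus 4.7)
The plan is to realise the equality as an unbounded Kasparov product. First I would unpack the definition of $\KInd_K^G$ supplied by Theorem \ref{natKG}: working through the commutativity of the defining diagram, $\KInd_K^G$ should admit a concrete realisation as a Kasparov-theoretic induction that pairs a $K$-equivariant $K$-homology class on $N$ with the $\Spin$-Dirac class along the $G/K$-directions of $M = G \times_K N$, i.e.\ the same class that defines $\DInd_K^G$.

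Next I would use the construction of the induced $\Spin^c$-structure $P^M$ in Subsection \ref{sec Spinc}, together with the decomposition $TM \cong p_{G/K}^*T(G/K) \oplus (G \times_K TN)$ from Corollary \ref{cor TM}, to produce a $G$-equivariant isomorphism of $\Z_2$-graded Clifford modules
\[
\SSS^M \cong G \times_K \bigl(\Delta_{d_{\p}} \hat{\otimes} \SSS^N\bigr),
\]
where $\Delta_{d_{\p}}$ carries the Clifford action of $\p$ via $\widetilde{\Ad}$ and $\SSS^N$ is the spinor bundle of $P^N$; the determinant line bundles on both sides agree by \eqref{eq det PM}. Choosing an adapted Clifford connection on $\SSS^M$ that uses the canonical $G$-invariant connection on $G \to G/K$ in the $\p$-directions and a $K$-equivariant $\Spin^c$-connection on $\SSS^N$ in the $N$-directions, the operator $\D_M^{L^{2\omega}}$ decomposes as the sum of a piece which, under the identification above, is precisely the Dirac operator $\D^V$ of \eqref{DV} applied fibrewise over $N$ with $V = \SSS^N$, plus a piece equal to the $G$-equivariant extension of $\D_N^{L^{2\nu}}$. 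Since the $K$-homology class of $\D_M^{L^{2\omega}}$ is insensitive to the choice of connection on the spinor bundle, this adapted form is harmless.

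Finally I would invoke Kucerovsky's criterion for unbounded Kasparov products to identify the $K$-homology class of this sum with the Kasparov product of $[\D_N^{L^{2\nu}}]$ and the $\Spin$-Dirac class along $G/K$ that defines $\DInd_K^G$, which is by the first paragraph exactly $\KInd_K^G [\D_N^{L^{2\nu}}]$. The main obstacle will be verifying Kucerovsky's hypotheses: the two Dirac-type operators above do not strictly anticommute, and their graded commutator produces lower-order correction terms coming from the second fundamental form of $M \to G/K$ and from the interaction between the $\widetilde{\Ad}$-action on $\Delta_{d_{\p}}$ and the $K$-action on $\SSS^N$. Controlling these terms, and setting up the Hilbert module structure so that Kucerovsky's connection and positivity conditions hold, is the technical content that Section \ref{Dirac} must supply.
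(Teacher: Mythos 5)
Your proposal identifies the right ingredients — the spinor bundle isomorphism $\SSS^M \cong \bigl((G\times\Delta_{d_\p})\boxtimes\SSS^N\bigr)/K$ (this is Lemma \ref{SpinorM}), the tangent bundle splitting from Corollary \ref{cor TM}, and the idea that $\KInd_K^G$ should send $[\D_N^{L^{2\nu}}]$ to a product-type operator — but your strategy for finishing is genuinely different from the paper's, and harder. You propose to realise $\D_M^{L^{2\omega}}$ itself (after choosing an adapted connection) as a sum of two Dirac-type pieces and then verify Kucerovsky's connection and positivity conditions to recognise it as an unbounded Kasparov product. You correctly flag the difficulty: the two pieces do not strictly anticommute, and the graded commutator picks up curvature and second-fundamental-form terms that you would need to control.

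The paper avoids this analysis entirely by splitting the argument into two easier steps. First it defines an \emph{auxiliary} operator $\tilde{\D}_M^{L^{2\omega}} := (\D_{G,K}\otimes 1 + 1\otimes\D_N^{L^{2\nu}})^K$. On the product $G\times N$ the two summands act on different factors and strictly anticommute, so $[\D_{G,K}]\times[\D_N^{L^{2\nu}}]$ is their graded sum by the standard external Kasparov product formula (\cite{HR}, Theorem 10.8.7) — no Kucerovsky estimates are needed — and descending by Corollary 3.13 of \cite{HL} gives $\KInd_K^G[\D_N^{L^{2\nu}}]=[\tilde{\D}_M^{L^{2\omega}}]$ (Lemma \ref{alphaDtilde}). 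Second, and this is the key simplification you are missing, the paper does \emph{not} try to show $\D_M^{L^{2\omega}}=\tilde{\D}_M^{L^{2\omega}}$ as operators: it shows only that they have the same principal symbol (Proposition \ref{prop tilde sigma}, built from the purely algebraic Lemmas \ref{tensor}, \ref{lem pullback}, \ref{lem symb DGK}). Since the $K$-homology class of a first-order elliptic operator depends only on its principal symbol, the lower-order discrepancies that worry you (and that would genuinely obstruct an exact operator identity) are automatically irrelevant. In short: your route would require a nontrivial Kucerovsky-type argument for non-anticommuting pieces; the paper buys the Kasparov product identification for free on the product space and then discards the analytic difficulties by passing to symbols. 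If you pursue your version, you should be prepared to justify the existence of an adapted Clifford connection making your decomposition exact, or else fall back on the symbol-level comparison — at which point you have rediscovered the paper's argument.
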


Combining Theorem \ref{natKG} and Proposition \ref{alphaDirac}, we obtain a proof of Theorem \ref{thm [Q,I]=0}:

\medskip
\noindent \emph{Proof of Theorem \ref{thm [Q,I]=0}.}
Let
\[
x = [N, \nu, \Phi^N, {L^{2\nu}}, (\relbar, \relbar)_{{L^{2\nu}}}, \nabla^N, P^N] \in \CHPS(K)
\]
be given, and write
\[
[M, \omega, \Phi^M, {L^{2\omega}}, (\relbar, \relbar)_{{L^{2\omega}}}, \nabla^M, P^M] := \HInd_K^G(x).
\]
Then by Proposition \ref{alphaDirac} and Theorem \ref{natKG},
\[
\begin{split}
Q_{\Spin}^G\bigl(\HInd_K^G (x)\bigr) &= \mu_M^G \bigl[{\D}_M^{{L^{2\omega}}} \bigr] \\
	&= \mu_M^G \circ \KInd_K^G \bigl[{\D}_N^{{L^{2\nu}}} \bigr] \\
	&= \DInd_K^G \circ \mu_N^K \bigl[{\D}_N^{{L^{2\nu}}} \bigr] \\
	&= \DInd_K^G \bigl( Q_{\Spin}^K(x) \bigr).
\end{split}
\]
\hfill $\square$

\medskip \noindent
It remains to prove Theorem \ref{natKG} and Proposition \ref{alphaDirac}. These proofs will be given in Sections \ref{natincl} and \ref{Dirac}.

\section{Naturality of the assembly map} \label{natincl}

We will prove Theorem \ref{natKG} by decomposing diagram \eqref{diag natKG} as follows:
\begin{equation}\label{diagram}
\xymatrix{
K_0^{G}(M) \ar[r]^{\mu_M^G} & K_0(C^*_r(G)) \\
K_0^{G \times \Delta(K)}(G \times N) \ar[r]^{\mu_{G \times N}^{G \times \Delta(K)}} \ar[u]_{V_{\Delta(K)}} & K_0(C^*_r(G \times K)) \ar[u]^{R^0_{K}} \\
K_0^{G \times K \times K}(G \times N) \ar[r]^{\mu_{G \times N}^{G \times K \times K}} \ar[u]^{\Res^{G \times K \times K}_{G \times \Delta(K)}}
		& K_0(C^*_r(G \times K \times K)) \ar[u]_{\Res^{G \times K \times K}_{G \times \Delta(K)}} \\
K_0^K(N) \ar[r]_{\mu_N^K} \ar@/^5pc/[uuu]^{\KInd_K^G} \ar[u]_{[{\D}_{G,K}] \times \relbar}
		& R(K).  \ar@/_5pc/[uuu]_{\DInd_K^G}  \ar[u]^{\mu_{G}^{G \times K}[{\D}_{G,K}] \times \relbar}
}
\end{equation}
In this diagram, all the horizontal maps involving the letter $\mu$ are analytic assembly maps. The symbol `$\times$' denotes the Kasparov product, and
$\Delta(K)$ is the diagonal subgroup of $K \times K$. The map $\DInd_K^G$ was defined in \eqref{IGK}.
The other maps will be defined in the remainder of this chapter.

The $K$-homology class $[{\D}_{G,K}] \in K_0^{G \times K}(G)$ is defined as follows.
Note that the $\Spin$-Dirac operator on $G/K$ is the operator ${\D}_{G/K} = \D^{\C}$, with $\C$ the trivial $K$-representation,
and $\D^{\C}$ as in \eqref{DV}. Let $p_G: G \to G/K$ be the
quotient map, let $\SSS^{G/K} := G \times_K \Delta_{\p}$ be the spinor bundle on $G/K$, and consider the trivial vector bundle $p_G^*\SSS^{G/K} = G \times \Delta_{d_{\p}} \to G$.
Let ${\D}_{G,K}$\index{daaaaa@${\D}_{G,K}$} be the operator on this bundle given by the same formula \eqref{DV} as the operator ${\D}^V$, with $V=\C$ the trivial representation.
This operator satisfies
\[ \label{DGK}
{\D}_{G,K}(p_G^*s) = p_G^* \bigl(\D^{\C}s \bigr),
\]
for all sections $s$ of $\SSS^{G/K} \to G/K$.
We will use the fact that it is equivariant with respect to the action of $G \times K$ on $G \times \Delta_{d_{\p}}$ defined by
\[
(g,k) \cdot (g', \delta) = (gg'k^{-1}, \widetilde{\Ad}(k) \cdot \delta),
\]
for $g, g' \in G$, $k \in K$ and $\delta \in \Delta_{d_{\p}}$. It is elliptic (see Lemma \ref{lem symb DGK}), and therefore defines a class $[{\D}_{G,K}] \in K_0^{G \times K}(G)$.

We will distinguish between the different subdiagrams of \eqref{diagram} by calling them the `left-hand', `top', `middle', `bottom' and `right-hand' diagrams.
Commutativity of the left-hand diagram is the definition of the map $\KInd_K^G$.\index{kaaf@$\KInd_K^G$} In this chapter we will prove that the other diagrams commute as well, thus giving a proof of Theorem \ref{natKG}.

\subsection[Naturality of the assembly map for epimorphisms]{The top diagram: naturality of the assembly map for epimorphisms} \label{sec top}

In this subsection, we suppose that $G$ is a locally compact Hausdorff group, and that $K \lhd G$ is a compact \emph{normal} subgroup of $G$. Furthermore,
let $X$ be a locally compact, Hausdorff, proper $G$-space such that $X/G$ is compact. Commutativity of the the top diagram is a special case of
commutativity of the following diagram:
\begin{equation} \label{diagnat}
\xymatrix{
K_0^{G/K}(X/K) \ar[r]_-{\mu_{X/K}^{G/K}} \ar@/^1.6pc/[rr]^{\mu_{X/K}^{G/K}} & K_0(C^*(G/K)) \ar[r]_-{\lambda_{G/K}} & K_0(C^*_r(G/K)) \\
K_0^G(X) \ar[r]^-{\mu_X^G} \ar[u]^{V_K}  \ar@/_1.6pc/[rr]_{\mu_X^G}& K_0(C^*(G)) \ar[r]^-{\lambda_G} \ar[u]_{R^0_K} & K_0(C^*_r(G)). \ar[u]_{R^0_K}
}
\end{equation}
We have used the same notation for the assembly map with respect to the full group $C^*$-algebra as for the assembly map with respect to the reduced one.

The maps $\lambda_{G/K}$ and $\lambda_G$ are by definition induced by the maps
\[
 \begin{split}
  C^*G &\to C^*_rG; \\
	C^*(G/K) &\to C^*_r(G/K),
 \end{split}
\]
defined by continuously extending the identity maps on $C_c(G)$ and $C_c(G/K)$, respectively. It is not hard to check that the right hand diagram in \eqref{diagnat} commutes.

Commutativity of the left hand diagram in \eqref{diagnat} is a special case of naturality of the assembly map for epimorphisms. This is proved in Valette's part of \cite{MV} for discrete groups. In \cite{HL}, it is indicated how to generalise this result to possibly nondiscrete groups. The notation `$V_K$' for the map in the left hand diagram is used in \cite{HL}.

It is a striking feature of our version of naturality of the assembly map for the monomorphism $K \hookrightarrow G$  that it actually relies on the epimorphism case in this way.

\subsection[Restriction to subgroups]{The middle diagram: restriction to subgroups}

In the middle diagram of \eqref{diagram}, the map
\[
\Res^{G \times K \times K}_{G \times \Delta(K)}: K_0^{G \times K \times K}(G \times N) \to K_0^{G \times \Delta(K)}(G \times N)
\]
is simply given by restricting representations and actions of $G \times K \times K$  to $G \times \Delta(K)$. The other restriction map,
\begin{equation} \label{resKth}
\Res^{G \times K \times K}_{G \times \Delta(K)}: K_0(C^*_r(G\times K \times K)) \to K_0(C^*_r(G \times \Delta(K))),
\end{equation}
is harder to define. (The restriction
map $C_c(G \times K \times K) \to C_c(G \times \Delta(K))$ is not continuous in the norms of the reduced group $C^*$-algebras involved, for example.)

 We define the map \eqref{resKth} using the \emph{K\"unneth formula}. Since $G$ is a connected Lie group (in particular, it is an almost connected locally compact topological group),
 it satisfies the Baum--Connes conjecture with arbitrary $G$-trivial coefficients (see \cite{CEO}, Corollary 0.5).
 By Corollary 0.2 of \cite{CEO}, the algebra $C^*_r(G)$ therefore satisfies the K\"unneth formula. In particular,
\[
\begin{split}
K_0(C^*_r(G \times K \times K)) &\cong K_0(C^*_r(G) \otimes_{\mathrm{min}} C^*_r(K \times K)) \\
	&\cong K_0(C^*_r(G)) \otimes K_0(C^*_r(K \times K)) \\
	&\cong K_0(C^*_r(G)) \otimes R(K\times K).
\end{split}
\]
 Here we have used the fact that the representation ring $R(K \times K)$ is torsion-free, and the fact that $C^*_r(G_1) \otimes_{\mathrm{min}} C^*_r(G_2) \cong C^*_r(G_1 \otimes G_2)$ for all locally compact Hausdorff groups $G_1$ and $G_2$. Analogously, we have an isomorphism $K_0(C^*_r(G \times K))\cong K_0(C^*_r(G)) \otimes R(K)$.

The isomorphism is given by the Kasparov product. This product is defined as the composition
\begin{multline} \label{eq Kasprod}
\KK_0(\C, C^*_r(G)) \otimes \KK_0(\C, C^*_r(K \times K)) \xrightarrow{1 \otimes \tau_{C^*_r(G)}} \\
 \KK_0(\C, C^*_r(G)) \otimes \KK_0(C^*_r(G), C^*_r(G) \otimes_{\mathrm{min}} C^*_r(K \times K)) \xrightarrow{\times_{C^*_r(G)}} \\
\KK_0(\C, C^*_r(G) \otimes_{\mathrm{min}} C^*_r(K \times K)),
\end{multline}
where $\tau_{C^*_r(G)}$ is defined by tensoring from the left by $C^*_r(G)$, and $\times$ denotes the Kasparov product (see \cite{Blackadar}, Chapter 18.9).  Let
\[
\Res^{K \times K}_{\Delta(K)}: R(K \times K) \to R(\Delta(K)) = R(K)
\]
be the usual restriction map to the diagonal subgroup. We define \eqref{resKth} as the map
\[
1_{K_0(C^*_r(G))} \otimes \Res^{K \times K}_{\Delta(K)}:  K_0(C^*_r(G)) \otimes R(K\times K) \to  K_0(C^*_r(G)) \otimes R(K).
\]

Commutativity of the middle diagram now follows from
\begin{lemma} \label{lem Res}
Let $X$ be a locally compact,  Hausdorff, proper $G\times K$-space with compact quotient, and let $Y$ be a compact,  Hausdorff $K$-space. Then the following diagram commutes:
\[
\xymatrix{
K_0^{G \times \Delta(K)}(X\times Y) \ar[r]^{\mu_{X \times Y}^{G \times \Delta(K)}} & K_0(C^*_r(G \times K)) \\
K_0^{G\times K\times K}(X\times Y) \ar[u]^{\Res^{G \times K \times K}_{G \times \Delta(K)}} \ar[r]^{\mu_{X\times Y}^{G\times K \times K}}
		& K_0(C^*_r(G \times K \times K)). \ar[u]_{\Res^{G \times K \times K}_{G \times \Delta(K)}}
}
\]
\end{lemma}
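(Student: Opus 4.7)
The plan is to exploit the product structure of $X\times Y$, together with the Künneth formula invoked just before the statement of Lemma \ref{lem Res}, to reduce the claim to the tautological compact-group statement that restriction of representations commutes with the equivariant index.

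First, since $Y$ is compact with $K$ acting and $X$ is a proper cocompact $(G\times K)$-space on which only the first $K$-factor of $G\times K\times K$ acts, the space $X\times Y$ is a genuine exterior product of a proper cocompact $(G\times K)$-space and a compact $K$-space. Using Kasparov's descent and the compatibility of the analytic assembly map with exterior products (see e.g.\ \cite{Blackadar}, Chapter 20), I would factor $\mu^{G\times K\times K}_{X\times Y}$ through the external Kasparov product $\mu^{G\times K}_X \boxtimes \mu^K_Y$ under the two Künneth identifications
\[
K_0^{G\times K}(X) \otimes_{\Z} K_0^K(Y) \longrightarrow K_0^{G\times K\times K}(X\times Y),
\]
\[
K_0(C^*_r(G \times K \times K)) \;\cong\; K_0(C^*_r(G \times K)) \otimes R(K).
\]
An analogous factorization holds in the diagonal case, where $\mu^{G\times \Delta(K)}_{X\times Y}$ becomes the external Kasparov product $\mu^{G\times K}_X \boxtimes \mu^{\Delta(K)}_Y$ after the corresponding Künneth identifications. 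Under these identifications, both vertical restriction maps in the square become the identity on the $K_0(C^*_r(G\times K))$-factor tensored with the appropriate compact-group restriction on $R(K\times K)$ or $K_0^{K\times K}(Y)$.

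With this factorization in hand, the problem collapses to the commutativity of
\[
\xymatrix{
K_0^{\Delta(K)}(Y) \ar[r]^{\mu^{\Delta(K)}_Y} & R(K) \\
K_0^{K\times K}(Y) \ar[r]^{\mu^{K\times K}_Y} \ar[u]^{\Res} & R(K\times K), \ar[u]_{\Res^{K\times K}_{\Delta(K)}}
}
\]
which is immediate: both horizontal maps are ordinary equivariant indices of elliptic operators on the compact manifold $Y$, and restriction of a virtual representation to $\Delta(K)$ visibly commutes with taking the equivariant index of the same operator regarded as equivariant under a smaller compact group.

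The main obstacle is the first step: making precise the external-product factorization of the assembly map in the twisted-product situation where $\Delta(K)$ acts diagonally on $X\times Y$, and verifying that the relevant Künneth comparison map is an isomorphism (rather than merely a surjection modulo torsion) so that the factorization can be read off unambiguously. The key inputs here are the torsion-freeness of $R(K)$ and $R(K\times K)$, the compactness of $Y$, and Kasparov-naturality of the cutoff projection used to define $\mu$, all of which combine to give genuine isomorphisms at every level of the decomposition.
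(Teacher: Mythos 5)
Your proposal has a genuine gap in the first step, and I do not believe it can be repaired along the lines you sketch. The factorisation $\mu^{G\times\Delta(K)}_{X\times Y}=\mu^{G\times K}_X\boxtimes\mu^{\Delta(K)}_Y$ is not available, for two structural reasons. First, the action of $G\times\Delta(K)$ on $X\times Y$ is genuinely diagonal in the $K$-variable: $\Delta(K)$ acts simultaneously on $X$ (through the $K$-factor of $G\times K$) and on $Y$. Consequently $X\times Y$ with this action is \emph{not} an exterior product of a $G\times K$-space and a $\Delta(K)$-space, and $K_0^{G\times\Delta(K)}(X\times Y)$ does not split as $K_0^{G\times K}(X)\otimes K_0^{\Delta(K)}(Y)$. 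Second, even ignoring that, the proposed exterior product has the wrong target: $\mu^{G\times K}_X$ lands in $K_0(C^*_r(G\times K))$ and $\mu^{\Delta(K)}_Y$ lands in $R(K)$, so their Kasparov product lands in $K_0(C^*_r(G\times K))\otimes R(K)\cong K_0(C^*_r(G\times K\times K))$, whereas the top assembly map $\mu^{G\times\Delta(K)}_{X\times Y}$ takes values in $K_0(C^*_r(G\times K))$. The ``key inputs'' you list (torsion-freeness of $R(K)$, compactness of $Y$, naturality of the cutoff projection) do not repair this; they are compatibility facts that apply \emph{after} one has a product decomposition, and no such decomposition exists here. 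The final square you reduce to is indeed tautological, but the route to it does not exist.

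The paper's proof sidesteps this issue entirely by never attempting to split the \emph{source} $K$-homology groups or the assembly maps themselves. It uses the K\"unneth isomorphism only on the \emph{target} $K_0(C^*_r(G\times K\times K))\cong K_0(C^*_r(G))\otimes R(K\times K)$ to reduce to the case of a class $a$ whose image under $\mu^{G\times K\times K}_{X\times Y}$ is a simple tensor $b\times[C^*_r(G)\otimes V]$; it then compares the two Hilbert $C^*_r(G\times K)$-modules $\E'=\mu^{G\times\Delta(K)}_{X\times Y}(\Res\,a)$ and $\E_G\otimes(V|_{\Delta(K)})$ directly, by exhibiting an explicit isometric module isomorphism on the dense common subspace $\HH_c$ using the $C^*(G\times K)$-valued inner products. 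This works precisely because the Fredholm operator of the cycle is unchanged by restriction and only the module structure needs to be matched, so there is nothing to factor on the $K$-homology side. You should either adopt this cycle-level comparison, or, if you prefer a more structural route, restrict attention to the case $X=G$ where a shear map $(g,y)\mapsto(g,g^{-1}y)$ untwists the diagonal action; but the lemma as stated is for general $X$, so the cycle-level argument is the one that covers the whole statement.
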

\begin{proof}
Let $a = [\HH, F, \pi] \in K_0^{G\times K\times K}(X\times Y)$, $b = [\E_G, F_G] \in K_0(C^*_r(G))$ and $[V] \in R(K\times K)$ be given, such that
\[
\mu_{X\times Y}^{G \times K \times K}(a) = b\times [C^*_r(G) \otimes V] = [\E_G \otimes V, F_G \otimes 1_V] \in K_0(C^*_r(G\times K \times K)).
\]
Because the assembly and restriction maps are $\Z$-module homomorphisms, it is sufficient to prove the claim in this case where the image of $a$ is a simple tensor.

If we write
\[
\begin{split}
[\E, F_{\E}] & := \mu_{X\times Y}^{G \times K \times K}(a) \quad \in K_0(C^*_r(G \times K \times K)); \\
[\E', F_{\E'}] & := \mu_{X \times Y}^{G \times \Delta(K)} \circ  \Res^{G \times K \times K}_{G \times \Delta(K)}(a) \quad \in K_0(C^*_r(G \times K)),
\end{split}
\]
then the operators $F_{\E}$ and $F_{\E'}$ coincide on the dense mutual subspace $\HH_c$ of $\E$ and $\E'$. It is therefore enough to prove that
\[
\E' \cong \E_G \otimes_{\C} \bigl(V|_{\Delta(K)} \bigr)
\]
as Hilbert $C^*_r(G\times K)$-modules.

Using the usual choice of representatives of the classes $b$ and $[\E, F_{\E}]$  we have an isomorphism of Hilbert $C^*_r(G \times K \times K)$-modules
\[
\psi: \E \xrightarrow{\cong} \E_G \otimes V.
\]
Define the map
\[
\varphi: \E' \xrightarrow{\cong} \E_G \otimes \bigl( V|_{\Delta(K)} \bigr)
\]
by $\varphi|_{\HH_c} = \psi|_{\HH_c}$, and continuous extension. The map $\varphi$ is well-defined, and indeed an isomorphism, if it is a homomorphism of Hilbert $C^*_r(G\times K)$-modules. To show that $\varphi$ preserves the $C^*_r(G \times K)$-valued inner products, let $\xi_1, \xi_2 \in \HH_c$ be given, and suppose that $\varphi(\xi_j) = e_j \otimes v_j \in \E_G \otimes V$ for $j=1,2$. (By linearity of $\varphi$, it is indeed enough to consider the case where the $\varphi(\xi_j)$ are simple tensors.) Then for all $g \in G$ and $k \in K$,
\[
\begin{split}
\bigl( \varphi(\xi_1), \varphi(\xi_2) \bigr)_{\E_G \otimes V|_{\Delta(K)}} (g,k) &= ( e_1, e_2)_{\E_G} (g) \bigl(v_1, (k, k)\cdot v_2\bigr)_V \\
	&= \bigl( \psi(\xi_1), \psi(\xi_2)\bigr)_{\E_G \otimes V} (g,k,k) \\
	&= ( \xi_1, \xi_2)_{\E} (g,k,k),
\end{split}
\]
because $\psi$ is an isomorphism of Hilbert $C^*(G \times K \times K)$-modules. The latter expression equals
\[
\bigl(\xi_1, (g,k,k)\cdot \xi_2\bigr)_{\HH} = (\xi_1, \xi_2)_{\E'}(g,k),
\]
which shows that $\varphi$ preserves the inner products.

Finally, because $\psi$ is a homomorphism of $C^*_r(G \times K \times K)$-modules, the map $\varphi$ is a homomorphism of $C^*_r(G \times K)$-modules on $\HH_c$, and hence on all of $\E'$.
\end{proof}

\subsection[Multiplicativity of the assembly map]{The bottom diagram: multiplicativity of the assembly map} \label{sec bottom}

Commutativity of the bottom diagram is a special case of the multiplicativity property of the assembly map that we will prove in this subsection.
This property generalises multiplicativity of the index with respect to Atiyah's `sharp product' of elliptic operators, as described in \cite{Atiyah}, Theorem 3.5. In this subsection, we will denote the tensor product of Hilbert $C^*$-modules by $\hat \otimes$, to emphasise the difference with the algebraic tensor product $\otimes$.

For this subsection, let $G_1$ and ${G_2}$ be locally compact Hausdorff topological groups, acting properly on two locally compact metrisable spaces ${X_1}$ and ${X_2}$, respectively. Suppose ${X_1}/G_1$ and ${X_2}/{G_2}$ are compact. Consider the Kasparov product maps
\begin{align}
K_0^{G_1}(X_1) \otimes K_0^{G_2}(X_2) & \xrightarrow{\times} K_0^{G_1 \times G_2}(X_1 \times X_2); \nonumber \\
K_0(C^*_{(r)}(G_1)) \otimes  K_0(C^*_{(r)}(G_2)) & \xrightarrow{\times} K_0(C^*_{(r)}(G_1 \times G_2)). \label{KasproductGH}
\end{align}
Here the symbol $C^*_{(r)}$ denotes either the full or the reduced group $C^*$-algebra, and we have used the $C^*$-algebra isomorphisms
\[
 C_0(X_1) \otimes C_0(X_2) \cong C_0(X_1 \times X_2)
\]
for all locally compact Hausdorff spaces $X_1$ and $X_2$,
and
\[
\begin{split}
C^*(G_1) \otimes_{\mathrm{max}} C^*(G_2) &\cong C^*(G_1 \times G_2); \\
C^*_r(G_1) \otimes_{\mathrm{min}} C^*_r(G_2) &\cong C^*_r(G_1 \times G_2).
\end{split}
\]
for locally compact Hausdorff groups $G_1$ and $G_2$.

Analogously to \eqref{eq Kasprod}, the Kasparov product \eqref{KasproductGH} is actually the composition
\begin{multline}\label{KasprodKth}
 \KK_0(\C, C^*_{(r)}(G_1))  \otimes \KK_0(\C, C^*_{(r)}{(G_2)}) \xrightarrow{1\otimes \tau_{C^*_{(r)}{(G_1)}}} \\
 \KK_0(\C, C^*_{(r)}{(G_1)})  \otimes \KK_0(C^*_{(r)}{(G_1)}, C^*_{(r)}{(G_1)} \otimes C^*_{(r)}{(G_2)}) \xrightarrow{\times_{C^*_{(r)}(G_1)}} \\
 \KK_0(\C, C^*_{(r)}{(G_1)}  \otimes C^*_{(r)}{(G_2)}) = \KK_0(\C, C^*_{(r)}({G_1} \times {G_2})).
\end{multline}
The tensor product denotes the maximal tensor product in the case of full $C^*$-algebras, and the minimal tensor product for reduced $C^*$-algebras.

\begin{theorem}[Multiplicativity of the assembly map] \label{multass} \index{analytic assembly map@(analytic) assembly map!multiplicativity of --}
If ${X_1}$ and ${X_2}$ are metrisable, then for all $a_j \in K_0^{G_j}({X_j})$, we have
\[
\mu_{X_1}^{G_1}(a_1) \times \mu_{X_2}^{G_2}(a_2) = \mu_{{X_1} \times {X_2}}^{{G_1} \times {G_2}}(a_1 \times a_2) \quad \in K_0(C^*_{(r)}({G_1} \times {G_2})).
\]
\end{theorem}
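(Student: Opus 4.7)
The plan is to cast the assembly map in Kasparov's descent picture and reduce the statement to three elementary compatibilities: multiplicativity of the Mishchenko projection, multiplicativity of Kasparov's descent homomorphism, and associativity of the external Kasparov product.

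First I will use that, because $X_j / G_j$ is compact, a cut-off function on $X_j$ produces a canonical Mishchenko projection class
\[
[p_{X_j}] \in K_0(C_0(X_j) \rtimes_{(r)} G_j) \cong \KK_0(\C, C_0(X_j) \rtimes_{(r)} G_j),
\]
and that the assembly map admits the factorisation
\[
\mu_{X_j}^{G_j}(a_j) = [p_{X_j}] \otimes_{C_0(X_j) \rtimes_{(r)} G_j} j_{G_j}(a_j),
\]
where $j_{G_j}: \KK_0^{G_j}(C_0(X_j), \C) \to \KK_0(C_0(X_j) \rtimes_{(r)} G_j, C^*_{(r)}(G_j))$ is Kasparov's descent homomorphism. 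With this formulation in hand, the theorem becomes a formal manipulation of Kasparov products.

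The two ingredients I need are the following. First, under the canonical $*$-isomorphism
\[
C_0(X_1 \times X_2) \rtimes_{(r)} (G_1 \times G_2) \cong (C_0(X_1) \rtimes_{(r)} G_1) \otimes (C_0(X_2) \rtimes_{(r)} G_2),
\]
the Mishchenko projection is itself an external product, $[p_{X_1 \times X_2}] = [p_{X_1}] \times [p_{X_2}]$. This will be checked directly from the definition: if $c_j$ is a cut-off function on $X_j$ for $G_j$, then $c_1 c_2$ is a cut-off on $X_1 \times X_2$ for $G_1 \times G_2$, and the two defining matrix coefficients agree on the nose. Second, Kasparov's descent is multiplicative on external products, $j_{G_1 \times G_2}(a_1 \times a_2) = j_{G_1}(a_1) \times j_{G_2}(a_2)$; for the full crossed product this is a standard naturality property, because tensoring cycles commutes with tensoring by the regular bimodule, and the metrisability hypothesis on $X_1, X_2$ lets us take the cycles separable so that associativity of the external Kasparov product applies without technical caveats.

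Combining these two ingredients with associativity of the Kasparov product yields
\[
\mu_{X_1 \times X_2}^{G_1 \times G_2}(a_1 \times a_2) = ([p_{X_1}] \times [p_{X_2}]) \otimes (j_{G_1}(a_1) \times j_{G_2}(a_2)) = \mu_{X_1}^{G_1}(a_1) \times \mu_{X_2}^{G_2}(a_2),
\]
which proves the theorem for the full group $C^*$-algebra. To pass to the reduced case I will invoke naturality of diagram \eqref{diagnat}: the maps $\lambda_{G_j}: K_0(C^*(G_j)) \to K_0(C^*_r(G_j))$ intertwine the external products on source and target, because the canonical map $C^*(G_1) \otimes_{\max} C^*(G_2) \to C^*_r(G_1) \otimes_{\min} C^*_r(G_2)$ is a $*$-homomorphism. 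The step I expect to be the main obstacle is precisely this transition from the full to the reduced setting: multiplicativity of descent is transparent for $\otimes_{\max}$ but genuinely subtle for $\otimes_{\min}$, and a clean argument has to exploit either an amenability-type input on the relevant bimodules or the same Künneth hypothesis already invoked in the analysis of the middle diagram.
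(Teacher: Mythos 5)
Your proof is essentially correct, but it takes a genuinely different route from the paper's. The paper proves this theorem entirely in the unbounded (Baaj--Julg) picture of $\KK$-theory: it uses Kucerovsky's explicit construction of the unbounded assembly map, shows by hand that the Hilbert $C^*$-modules $\tilde\E_1 \hat\otimes \tilde\E_2$ and $\tilde\E$ (the module for the product cycle) coincide, and then carries out a careful analysis of operator domains and graph norms to invoke Kucerovsky's criterion (Lemma 10 and Corollary 17 in \cite{Kucerovsky}) and conclude that the two unbounded cycles represent the same class. You instead factor $\mu_X^G$ through the Mishchenko projection and Kasparov's descent homomorphism, and reduce the theorem to three formal compatibilities: $[p_{X_1 \times X_2}] = [p_{X_1}] \times [p_{X_2}]$, multiplicativity of descent on external products, and the interchange law for internal and external Kasparov products. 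Your approach is more conceptual and avoids the lengthy unbounded-operator domain estimates (Lemmas \ref{lem ineq norm} and \ref{lem limiet} in the paper); the paper's approach is more elementary in that it works directly with explicit cycles rather than relying on the full functorial machinery of descent, and it yields the same bounded transform at the end. Both passes to the reduced case go through the $\lambda$-maps of diagram \eqref{diagnat}.

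One remark on your final paragraph: the concern that the reduced case requires an amenability or K\"unneth input is unfounded and can be dropped. Once the statement is established for the full group $C^*$-algebras, you do not need multiplicativity of the \emph{reduced} descent homomorphism at all. The map $\lambda_{G_1} \otimes \lambda_{G_2} : C^*(G_1) \otimes_{\max} C^*(G_2) \to C^*_r(G_1) \otimes_{\min} C^*_r(G_2)$ is a $*$-homomorphism (any $C^*$-cross-norm is dominated by the maximal one), hence it intertwines the external Kasparov products on $K$-theory; combined with the commuting squares $\lambda_{G_j} \circ \mu_{X_j}^{G_j,\mathrm{full}} = \mu_{X_j}^{G_j,\mathrm{red}}$ and $\lambda_{G_1\times G_2} \circ \mu_{X_1\times X_2}^{G_1\times G_2,\mathrm{full}} = \mu_{X_1\times X_2}^{G_1\times G_2,\mathrm{red}}$, the reduced case is an immediate consequence of the full case. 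This is exactly what the paper does, and it requires neither exactness of the groups nor the K\"unneth theorem.
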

Here the assembly maps are defined with respect to either the full of the reduced group $C^*$-algebras. We suppose $X_1$ and $X_2$ to be metrisable, because the $C^*$-algebras $C_0(X_1)$ and $C_0(X_2)$ are then separable, so that we can use Baaj and Julg's unbounded description of the Kasparov product. Theorem \ref{multass} may well be true for non-metrisable spaces, but we will only apply it to smooth manifolds anyway.

\subsubsection*{The assembly map in the unbounded picture}

In the proof of Theorem \ref{multass}, we will use the unbounded picture of $\KK$-theory (see \cite{BJ}), because of the easy form of the Kasparov product in this setting.
The construction of the unbounded assembly map given below works for full group $C^*$-algebras, so the following proof applies only to this case. Theorem \ref{multass} for reduced group $C^*$-algebras
can then be deduced using the maps $\lambda_{G_1}$ and $\lambda_{G_2}$ defined in Subsection \ref{sec top}.

For full group $C^*$-algebras, the assembly map in the unbounded picture is defined in Kucerovsky's appendix to \cite{MV}, in the following way. Let $G$ be a second countable, locally compact Hausdorff group, acting properly on a locally compact Hausdorff space $X$, with compact quotient. The assembly map in the unbounded picture is given by
\begin{equation} \label{eq unbass}
\mu_{X}^{G}(\HH, D, \pi) = (\E, D_{\E}) \quad \in \Psi_0(\C, C^*{G}),
\end{equation}
for all $(\HH, D, \pi ) \in \Psi_0^{G}(C_0({X}), \C)$. The Hilbert $C^*(G)$-module $\E$ is defined as usual for the
assembly map. The definition of the operator $D_{\E}$ on $\E$ is more involved.

First, let $\tilde \HH$ be the auxiliary Hilbert $C^*(G)$-module defined as the completion of the Hilbert $C_c(G)$-module $C_c(G, \HH)$ with respect to the $C_c(G) \subset C^*(G)$-valued inner product
\begin{equation} \label{eq inprod H tilde}
(\varphi, \psi)_{\tilde \HH}(g) := \int_G \bigl(\varphi(g'), \psi(g'g) \bigr)_{\HH}\, dg',
\end{equation}
where $\varphi, \psi \in C_c(G, \HH)$, $g \in G$, and $dg'$ is a Haar measure on $G$. Next, let $h \in C_c(X)$ be a function such that for all $x \in X$,
\[
\int_G h^2(gx)\, dg = 1
\]

Let $p \in C_c(X \times G)$ be the projection given by
\begin{equation} \label{eq p}
p(x,g) := \overline{h(x)}h(g^{-1}x).
\end{equation}
This function is compactly supported by properness of the action of $G$ on $X$. Let $\tilde \pi: C_c(X \times G) \to \B(\tilde \HH)$ be the representation given by
\[
\bigl(\tilde \pi(f)\varphi \bigr)(g) = \int_G \pi(f(\relbar, g')) g' \cdot \varphi(g'^{-1}g)\, dg',
\]
for $f \in C_c(X \times G)$, $\varphi \in C_c(G,\HH)$ and $g \in G$.
(The representation $\tilde \pi$ can actually be extended to the crossed product $C_0(X) \rtimes G$, but we will not use this extension.)

Then the map
\[
\alpha: \tilde \pi(p)C_c(G, \HH) \to \HH_c,
\]
given by
\[
\tilde \pi(p) \varphi \mapsto \int_G g^{-1}\pi(h) \varphi(g) \, dg,
\]
preserves the $C^*(G)$-valued inner products and the $C^*(G)$-module structures on $\tilde \HH$ and on $\E$, and induces an isomorphism $\tilde \pi(p) \tilde \HH \cong \E$ of Hilbert $C^*(G)$-modules. We will write $\tilde \E := \tilde \pi(p) \tilde \HH$.

To define the operator $D_{\E}$ on $\E$ we first consider an operator $D_{\tilde \E}$ on $\tilde \E$. This operator is defined as the closure of the operator $\tilde D$ on $\tilde \E$, given by
\begin{equation} \label{eq tilde D}
\tilde D \bigl(\tilde \pi(p) \varphi \bigr) := \tilde \pi(p) \bigl( D \circ \varphi \bigr),
\end{equation}
on the domain $\dom \tilde D := \tilde \pi(p) C_c(G, \dom D)$. We finally set
\[
D_{\E} := \alpha D_{\tilde \E} \alpha^{-1},
\]
on the domain $\dom D_{\E} = \alpha\bigl( \dom D_{\tilde \E}\bigr)$.

In the proof of Theorem \ref{multass}, we will actually use the following definition of the assembly map:
\begin{equation} \label{eq ass unb}
\tilde \mu_X^G(\HH,D,\pi) := \bigl(\tilde \E, D_{\tilde \E}\bigr) \quad \in \Psi_0(\C, C^*{G}),
\end{equation}
which gives the same class in $K_0(C^*(G))$ as \eqref{eq unbass}, because $\alpha$ is an isomorphism.

Kucerovsky's proof that the above constructions give a well-defined description of the assembly map in the unbounded picture is valid for discrete groups,
but it admits a straightforward generalisation to possibly nondiscrete (unimodular) ones.
One simply replaces sums by integrals, and uses the fact that the integral over a compact, finite Borel space of a continuous family of adjointable operators is again an adjointable operator.
In addition, in
the proof of Lemma 2.15 in \cite{MV}, one takes $\beta^{-1}(\pi(f) \eta) = \tilde \pi(p) \psi$, with $\psi(g) = \pi(h) \pi(g\cdot f) g\cdot \eta$ (where the $\beta$ in \cite{MV} is our $\alpha$).
This reduces to
Valette's $\beta^{-1}(\pi(f)\eta) = \tilde \pi(p)\tilde \pi(\langle h|f\rangle)  \bar \eta$  in the discrete case.

\subsubsection*{Proof of Theorem \ref{multass}}
For $j=1,2$, let
\[
a_j = (\HH_j, D_j, \pi_j) \in \Psi_0^{G_j}(C_0(X_j), \C)
\]
be given. Then
\[
\tilde \mu_{X_j}^{G_j}(a_j) = \bigl(\tilde \E_j, D_{\tilde \E_j}\bigr),
\]
as in \eqref{eq ass unb}. The product of $\tilde \mu_{X_1}^{G_1}(a_1)$ and $\tilde \mu_{X_2}^{G_2}(a_2)$ is
\begin{equation} \label{eq prod ass}
\tilde \mu_{X_1}^{G_1}(a_1) \times \tilde \mu_{X_2}^{G_2}(a_2) =
(\tilde \E_1 \hat \otimes \tilde \E_2, D_{\tilde \E_1 \hat \otimes \tilde \E_2}) \quad \in \Psi_0(\C, C^*(G_1 \times G_2)).
\end{equation}
Here $D_{\tilde \E_1 \hat \otimes \tilde \E_2}$ is the closure of the operator
\[
D_{\tilde \E_1} \otimes 1_{\tilde \E_2} + 1_{\tilde \E_2} \otimes D_{\tilde \E_2},
\]
on the domain $\dom D_{\tilde \E_1} \otimes \dom D_{\tilde \E_2}$.

On the other hand, the product $a_1 \times a_2$ is
\begin{equation} \label{eq a1 maal a2}
(\HH_1 \hat \otimes \HH_2, D_{\HH_1 \hat \otimes \HH_2}, \pi) \quad \in \Psi_0^{G_1 \times G_2}(C_0(X_1 \times X_2), \C),
\end{equation}
with $D_{\HH_1 \hat \otimes \HH_2}$ the closure of the operator
\[
D_1 \otimes 1_{\HH_2} + 1_{\HH_1} \otimes D_2
\]
on $\dom D_1 \otimes \dom D_2$. Furthermore,
 we have abbreviated $\pi := \pi_1 \otimes \pi_2$ for later convenience.
Applying the unbounded assembly map $\tilde \mu_{X_1 \times X_2}^{G_1 \times G_2}$ to the cycle \eqref{eq a1 maal a2}, we obtain
\begin{equation} \label{eq ass prod}
\bigl(\tilde \E, D_{\tilde \E} \bigr)\quad \in \Psi_0(\C, C^*(G_1 \times G_2)),
\end{equation}
where $\tilde \E := \tilde \pi(p) \widetilde{\HH_1 \hat \otimes \HH_2}$. Here $p:= p_1 \otimes p_2$, with
$p_j$ the projection in $C_c(X_j \times G_j)$ as defined in \eqref{eq p}. Furthermore, the operator $D_{\tilde \E}$ is the closure of the operator $\tilde D_{\HH_1 \hat \otimes \HH_1}$, as defined in \eqref{eq tilde D}, with $D = D_{\HH_1 \hat \otimes \HH_2}$.

First, let us show that $\tilde \E = \tilde \E_1 \hat \otimes \tilde \E_2$. Note that $\widetilde{\HH_1 \hat \otimes \HH_2}$ is the completion of the space $C_c(G_1 \times G_2, \HH_1 \hat \otimes \HH_2)$ with respect to the $C^*(G_1 \times G_2)$-valued inner product $(\relbar, \relbar)_{\widetilde{\HH_1 \hat \otimes \HH_2}}$, defined analogously to \eqref{eq inprod H tilde}. On the other hand,
\[
\tilde \E_1 \hat \otimes \tilde \E_2 = \tilde \pi_1(p_1) \tilde \HH_1 \hat \otimes \tilde \pi_2(p_2) \tilde \HH_2 = \tilde \pi(p) \tilde \HH_1 \hat \otimes \tilde \HH_2,
\]
since it is not hard to check that $\tilde \pi(f_1 \otimes f_2) = \tilde \pi_1(f_1) \otimes \tilde \pi_2(f_2)$ for all $f_j \in C_c(X_j \times G_j)$. Here $\tilde \HH_1 \hat \otimes \tilde \HH_2$ is the completion of $C_c(G_1, \HH_1) \otimes C_c(G_2, \HH_2)$ in the $C^*(G_1) \otimes C^*(G_2) \cong C^*(G_1 \times G_2)$-valued inner product given by
\[
\bigl(\varphi_1 \otimes \varphi_2, \psi_1 \otimes \psi_2 \bigr)_{\tilde \HH_1 \hat \otimes \tilde \HH_2}
	= (\varphi_1, \psi_1)_{\tilde \HH_1} \otimes (\varphi_2, \psi_2)_{\tilde \HH_2},
\]
for $\varphi_j, \psi_j \in C_c(G_j, \HH_j)$.  It follows directly from the definition \eqref{eq inprod H tilde} of the inner products $(\relbar, \relbar)_{\widetilde{\HH_1 \hat \otimes \HH_2}}$ and $(\relbar, \relbar)_{\tilde \HH_1 \hat \otimes \tilde \HH_2}$, that they coincide on the subspace $C_c(G_1, \HH_1) \otimes C_c(G_2, \HH_2) \subset C_c(G_1 \times G_2, \HH_1 \hat \otimes \HH_2)$.

We claim that the completion of $C_c(G_1, \HH_1) \otimes C_c(G_2, \HH_2)$ with respect to this inner product contains the space $C_c(G_1 \times G_2, \HH_1 \hat \otimes \HH_2)$. Then we  indeed have $\widetilde{\HH_1 \hat \otimes \HH_2} \cong \tilde \HH_1 \hat \otimes \tilde \HH_2$, and hence
\[
 \tilde \E = \tilde \pi(p) \bigl(\widetilde{\HH_1 \hat \otimes \HH_2} \bigr) \cong
	 \tilde \pi(p) \bigl(\tilde \HH_1 \hat \otimes \tilde \HH_2\bigr) = \tilde \E_1 \hat \otimes \tilde \E_2,
\]
as Hilbert $C^*(G_1 \times G_2)$-modules.
The proof of this claim is based on the inequality
\begin{equation} \label{eq ineq norm}
\begin{split}
\|(\varphi, \varphi)_{\tilde \HH_1 \hat \otimes \tilde \HH_2}\|_{C^*(G_1 \times G_2)} & \leq
	\|\varphi\|_{L^1(G_1 \times G_2, \HH_1 \hat \otimes \HH_2)}^2 \\
	& := \left(\int_{G_1 \times G_2} \| \varphi(g_1, g_2) \|_{\HH_1 \hat \otimes \HH_2}  dg_1\, dg_2 \right)^2,
\end{split}
\end{equation}
for all $\varphi \in C_c(G_1, \HH_1) \otimes C_c(G_2, \HH_2)$. This inequality is proved in Lemma \ref{lem ineq norm} below.
Because of this estimate, the completion of $C_c(G_1, \HH_1) \otimes C_c(G_2, \HH_2)$ with respect to the inner product $(\relbar, \relbar)_{\tilde \HH_1 \hat \otimes \tilde \HH_2}$ contains the completion of this tensor product in the norm $\|\cdot \|_{L^1(G_1 \times G_2, \HH_1 \hat \otimes \HH_2)}$, which in turn contains $C_c(G_1 \times G_2, \HH_1 \hat \otimes \HH_2)$.

Next, we prove that the two unbounded cycles \eqref{eq prod ass} and \eqref{eq ass prod} define the same class in $\KK$-theory. By Lemma 10 and Corollary 17 from \cite{Kucerovsky}, this follows if we can show that
\begin{align}
 & \dom D_{\tilde \E_1 \hat \otimes \tilde \E_2} \subset \dom D_{\tilde \E}, \quad \text{and} \label{eq dom incl} \\
 & D_{\tilde \E}|_{\dom D_{\tilde \E_1 \hat \otimes \tilde \E_2}} = D_{\tilde \E_1 \hat \otimes \tilde \E_2}. \label{eq res equal}
\end{align}

We first prove \eqref{eq dom incl}. Note that the domain of $D_{\tilde \E_1 \hat \otimes \tilde \E_2}$ is the completion of $\dom D_{\tilde \E_1} \otimes \dom D_{\tilde \E_2}$ in the norm $\|\cdot \|_{D_{\tilde \E_1 \hat \otimes \tilde \E_2}}$, given by
\begin{equation} \label{eq norm}
\| \varphi_1 \otimes \varphi_2 \|_{D_{\tilde  \E_1 \hat \otimes \tilde \E_2}}^2 :=
	\|\varphi_1 \otimes  \varphi_2\|_{\tilde \HH_1 \hat \otimes \tilde \HH_2}^2 +  \\
		\| D_{\tilde \E_1}  \varphi_1 \otimes \varphi_2 + \varphi_1 \otimes D_{\tilde \E_2}  \varphi_2 \|_{\tilde \HH_1 \hat \otimes \tilde \HH_2}^2,
\end{equation}
for all $\varphi_j \in \dom D_{\tilde \E_j}$.
The domain of $D_{\tilde \E_j}$ in turn is the completion of $\tilde \pi_j(p_j) C_c(G_j, \dom D_j)$ in the norm $\|\cdot\|_{D_{\tilde \E_j}}$, defined analogously to \eqref{eq norm}.

To prove \eqref{eq dom incl}, we consider the subspace
\[
V :=  \tilde \pi_1(p_1) C_c(G_1, \dom D_1) \otimes \tilde \pi_2(p_2) C_c(G_2, \dom D_2)
\]
of $\dom D_{\tilde \E_1} \otimes \dom D_{\tilde \E_2}$.
We begin by showing that the completion of $V$ in the norm $\|\cdot \|_{D_{\tilde \E_1 \hat \otimes \tilde \E_2}}$ contains $\dom D_{\tilde \E_1} \otimes \dom D_{\tilde \E_2}$. This will imply that
\begin{equation} \label{eq V afsl}
\begin{split}
\overline{V} &= \overline{\dom D_{\tilde \E_1} \otimes \dom D_{\tilde \E_2}} \\
	&= \dom D_{\tilde \E_1 \hat \otimes \tilde \E_2},
\end{split}
\end{equation}
with completions taken in the norm $\|\cdot \|_{D_{\tilde \E_1 \hat \otimes \tilde \E_2}}$.

For $j = 1,2$, let $\varphi_j \in \dom D_{\tilde \E_j}$ be given. Let $\bigl(\varphi_j^k \bigr)_{k=1}^{\infty}$ be a sequence in $\tilde \pi_j(p_j) C_c(G_j, \dom D_j)$ such that
\[
\lim_{k \to \infty} \|\varphi_j^k - \varphi_j\|_{D_{\tilde \E_j}} = 0.
\]
We claim that
\begin{equation} \label{eq limiet}
\lim_{k \to \infty} \bigl\| \varphi_1^k \otimes \varphi_2^k - \varphi_1 \otimes \varphi_2 \bigr\|_{D_{\tilde \E_1 \hat \otimes \tilde \E_2}} = 0,
\end{equation}
which implies that $\varphi_1 \otimes \varphi_2$ lies in the completion of $V$ in the norm $\|\cdot \|_{D_{\tilde \E_1 \hat \otimes \tilde \E_2}}$.
This claim is proved in Lemma \ref{lem limiet} below. General elements of $\dom D_{\tilde \E_1} \otimes \dom D_{\tilde \E_2}$ are (finite) sums of simple tensors like $\varphi_1 \otimes \varphi_2$, and can be approximated by sums of sequences like $\bigl( \varphi_1^k \otimes \varphi_2^k \bigr)_{k=1}^{\infty}$. Hence the completion of $V$ in the norm $\|\cdot \|_{D_{\tilde \E_1 \hat \otimes \tilde \E_2}}$ indeed contains $\dom D_{\tilde \E_1} \otimes \dom D_{\tilde \E_2}$, so that \eqref{eq V afsl} holds.

Finally, observe that $\dom D_{\tilde \E}$ is the completion of $\pi(p) C_c(G_1 \times G_2, \dom D_{\HH_1 \hat \otimes \HH_2})$ in the norm $\|\cdot\|_{D_{\tilde \E}}$, which is again defined analogously to \eqref{eq norm}. Since $V$ is contained in $\pi(p) C_c(G_1 \times G_2, \dom D_{\HH_1 \hat \otimes \HH_2})$, the completion of $V$ in the norm $\|\cdot\|_{D_{\tilde \E}}$ is contained in $\dom D_{\tilde \E}$. Furthermore, the operators $D_{\tilde \E}$ and $D_{\tilde \E_1 \hat \otimes \tilde \E_2}$ coincide on $V$, since their restrictions to $V$ are both  given by
\[
\tilde \pi_1(p_1) \varphi_1 \otimes \tilde \pi_2(p_2) \varphi_2 \mapsto
 	\tilde \pi_1(p_1) D_1 \circ \varphi_1 \otimes \tilde \pi_2(p_2) \varphi_2
		+ \tilde \pi_1(p_1) \varphi_1 \otimes \tilde \pi_2(p_2) D_2 \circ \varphi_2.
\]
This implies that the norms $\|\cdot\|_{D_{\tilde \E}}$ and $\|\cdot \|_{D_{\tilde  \E_1 \hat \otimes \tilde \E_2}}$ are the same on $V$, so that the completion of $V$ with respect to $\|\cdot\|_{D_{\tilde \E}}$ equals the completion of $V$ with respect to $\|\cdot \|_{D_{\tilde  \E_1 \hat \otimes \tilde \E_2}}$, which equals $\dom D_{\tilde \E_1 \hat \otimes \tilde \E_2}$, by \eqref{eq V afsl}.
We conclude that
\[
\dom D_{\tilde \E_1 \hat \otimes \tilde \E_2} =\overline{V} \subset \dom D_{\tilde \E},
\]
as claimed.

Claim \eqref{eq res equal} now follows, because by \eqref{eq V afsl}, the restriction of $D_{\tilde \E}$ to $\dom D_{\tilde \E_1 \hat \otimes \tilde \E_2}$ is the closure of $D_{\tilde \E}|_V$, which equals $D_{\tilde \E_1 \hat \otimes \tilde \E_2}|_V$. The closure of the latter operator is $D_{\tilde \E_1 \hat \otimes \tilde \E_2}$, again by \eqref{eq V afsl}, and we are done.
\hfill $\square$

\begin{lemma} \label{lem ineq norm}
The inequality \eqref{eq ineq norm} holds for all $\varphi \in C_c(G_1, \HH_1) \otimes C_c(G_2, \HH_2)$.
\end{lemma}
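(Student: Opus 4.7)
The plan is to reduce the estimate to the standard fact that on $C_c(G_1 \times G_2)$, the full group $C^*$-norm is dominated by the $L^1$-norm. Indeed, any unitary representation $\pi$ of $G_1 \times G_2$ on a Hilbert space $\HH_\pi$ integrates to a $*$-representation of $L^1(G_1 \times G_2)$ with $\|\pi(f)\|_{\B(\HH_\pi)} \leq \|f\|_{L^1}$, and the universal $C^*$-norm on $C_c(G_1 \times G_2)$ is obtained as the supremum over all such representations. Consequently
\[
\|(\varphi, \varphi)_{\tilde \HH_1 \hat \otimes \tilde \HH_2}\|_{C^*(G_1 \times G_2)} \leq \|(\varphi, \varphi)_{\tilde \HH_1 \hat \otimes \tilde \HH_2}\|_{L^1(G_1 \times G_2)},
\]
and it suffices to bound the right-hand side by $\|\varphi\|_{L^1(G_1 \times G_2, \HH_1 \hat \otimes \HH_2)}^2$.

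The first step is to unwind the inner product using \eqref{eq inprod H tilde}. For a simple tensor $\varphi = \varphi_1 \otimes \varphi_2$ with $\varphi_j \in C_c(G_j, \HH_j)$ one has $(\varphi,\varphi)_{\tilde \HH_1 \hat \otimes \tilde \HH_2} = (\varphi_1, \varphi_1)_{\tilde \HH_1} \otimes (\varphi_2, \varphi_2)_{\tilde \HH_2}$, which by \eqref{eq inprod H tilde} is the function on $G_1 \times G_2$ given by
\[
(g_1, g_2) \mapsto \int_{G_1 \times G_2} \bigl(\varphi(g_1', g_2'), \varphi(g_1' g_1, g_2' g_2)\bigr)_{\HH_1 \hat \otimes \HH_2}\, dg_1'\, dg_2'.
\]
By sesquilinearity this same formula extends to arbitrary $\varphi \in C_c(G_1,\HH_1) \otimes C_c(G_2,\HH_2)$.

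Then I would estimate the $L^1$-norm directly, applying the triangle inequality, the Cauchy--Schwarz inequality in $\HH_1 \hat \otimes \HH_2$, Fubini, and the left-invariance of Haar measure on each $G_j$ to substitute $h_j = g_j' g_j$:
\begin{align*}
\|(\varphi, \varphi)_{\tilde \HH_1 \hat \otimes \tilde \HH_2}\|_{L^1(G_1 \times G_2)}
 &\leq \int_{(G_1 \times G_2)^2} \|\varphi(g_1', g_2')\|\, \|\varphi(g_1' g_1, g_2' g_2)\|\, dg_1'\, dg_2'\, dg_1\, dg_2 \\
 &= \left(\int_{G_1 \times G_2} \|\varphi(g_1, g_2)\|_{\HH_1 \hat \otimes \HH_2}\, dg_1\, dg_2\right)^2 \\
 &= \|\varphi\|_{L^1(G_1 \times G_2, \HH_1 \hat \otimes \HH_2)}^2.
\end{align*}
Combined with the first paragraph, this yields the claim.

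There is no real obstacle; the only points requiring care are the bookkeeping for the Haar measure convention (the substitution $h_j = g_j' g_j$ preserves a left Haar measure on $G_j$, so no modular function appears) and the justification of the integral formula on linear combinations, both of which are routine. The contractivity of the map $L^1(G) \to C^*(G)$ used in the first step is standard and appears in every treatment of group $C^*$-algebras.
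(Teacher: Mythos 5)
Your proposal is correct and follows the same route as the paper: bound the $C^*$-norm by the $L^1$-norm, expand the inner product via \eqref{eq inprod H tilde}, and then apply the triangle inequality, Cauchy--Schwarz, and left invariance of Haar measure. The only difference is that you spell out the preliminary reductions (contractivity of $L^1(G) \to C^*(G)$ and the extension of the integral formula from simple tensors) a bit more explicitly, which the paper leaves implicit.
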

\begin{proof}
For such $\varphi$, we have
\[
\begin{split}
\|(\varphi, & \varphi)_{\tilde \HH_1 \hat \otimes \tilde \HH_2}\|_{C^*(G_1 \times G_2)}  \leq \|(\varphi, \varphi)_{\tilde \HH_1 \hat \otimes \tilde \HH_2}\|_{L^1(G_1 \times G_2)} \\
& = \int_{G_1 \times G_2} \left|\int_{G_1 \times G_2} \bigl(\varphi(g_1', g_2'), \varphi(g_1'g_1, g_2'g_2) \bigr)_{\HH_1 \hat \otimes \HH_2} dg_1'\, dg_2'\right| \, dg_1\, dg_2 \\
& \leq \int_{G_1 \times G_2} \int_{G_1 \times G_2} \left|\bigl(\varphi(g_1', g_2'), \varphi(g_1'g_1, g_2'g_2) \bigr)_{\HH_1 \hat \otimes \HH_2}\right| dg_1'\, dg_2' \, dg_1\, dg_2 \\
& \leq \int_{G_1 \times G_2} \int_{G_1 \times G_2} \|\varphi(g_1', g_2') \|_{\HH_1 \hat \otimes \HH_2}\,  \| \varphi(g_1'g_1, g_2'g_2) \|_{\HH_1 \hat \otimes \HH_2} dg_1'\, dg_2' \, dg_1\, dg_2,
\end{split}
\]
by the Cauchy-Schwartz inequality.  Because of left invariance of the Haar measures $dg_1$ and $dg_2$, the latter expression is the square of the $L^1$-norm of $\varphi$.
\end{proof}

\begin{lemma} \label{lem limiet}
The limit \eqref{eq limiet} equals zero.
\end{lemma}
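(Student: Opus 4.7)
The plan is to reduce the statement to two routine estimates on the two summands of the graph norm \eqref{eq norm}, using bilinear telescoping and the fact that on an external tensor product of Hilbert $C^*$-modules the elementary-tensor norm is submultiplicative.

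First I would unpack the graph norm: proving \eqref{eq limiet} is equivalent to showing separately that
\[
\bigl\|\varphi_1^k \otimes \varphi_2^k - \varphi_1 \otimes \varphi_2\bigr\|_{\tilde \HH_1 \hat \otimes \tilde \HH_2} \longrightarrow 0
\]
and
\[
\bigl\| D_{\tilde \E_1} \varphi_1^k \otimes \varphi_2^k + \varphi_1^k \otimes D_{\tilde \E_2} \varphi_2^k - D_{\tilde \E_1}\varphi_1 \otimes \varphi_2 - \varphi_1 \otimes D_{\tilde \E_2} \varphi_2\bigr\|_{\tilde \HH_1 \hat \otimes \tilde \HH_2} \longrightarrow 0.
\]
Each of these I would split via the standard identity $a_1 \otimes a_2 - b_1 \otimes b_2 = (a_1 - b_1) \otimes a_2 + b_1 \otimes (a_2 - b_2)$ (applied twice in the second case) into a sum in which each summand has at least one factor tending to zero and the other remaining bounded.

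The key input is that the external tensor product $\tilde \HH_1 \hat \otimes \tilde \HH_2$ is a Hilbert $C^*(G_1) \otimes C^*(G_2)$-module for which $\|\xi \otimes \eta\|_{\tilde \HH_1 \hat \otimes \tilde \HH_2} = \|\xi\|_{\tilde \HH_1}\|\eta\|_{\tilde \HH_2}$, a consequence of the fact that the $C^*$-tensor product of the positive elements $(\xi,\xi)_{\tilde \HH_1}$ and $(\eta,\eta)_{\tilde \HH_2}$ has norm equal to the product of the norms. Combined with the hypothesis $\|\varphi_j^k - \varphi_j\|_{D_{\tilde \E_j}} \to 0$, which forces both $\|\varphi_j^k - \varphi_j\|_{\tilde \HH_j} \to 0$ and $\|D_{\tilde \E_j}\varphi_j^k - D_{\tilde \E_j}\varphi_j\|_{\tilde \HH_j} \to 0$ (and in particular boundedness of $\|\varphi_j^k\|_{\tilde \HH_j}$ and $\|D_{\tilde \E_j}\varphi_j^k\|_{\tilde \HH_j}$), each summand in the telescoping decomposition is dominated by a product of a vanishing term and a bounded term.

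Assembling these bounds via the triangle inequality delivers \eqref{eq limiet}. I do not foresee a genuine obstacle: everything comes down to the submultiplicativity of the elementary-tensor norm together with the decomposition of convergence in the graph norm into convergence in $\tilde \HH_j$ together with convergence of $D_{\tilde \E_j}\varphi_j^k$ in $\tilde \HH_j$.
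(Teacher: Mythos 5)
Your proposal is correct and follows essentially the same route as the paper: expand the graph norm into its two summands, telescope each difference of elementary tensors, and bound each resulting term as a product of a vanishing factor and a bounded factor using the (sub)multiplicativity of the elementary-tensor norm on the external tensor product. The only cosmetic difference is that you invoke the exact cross-norm identity $\|\xi\otimes\eta\|=\|\xi\|\,\|\eta\|$ for $C^*$-tensor products, whereas the paper cites only the subcross inequality $\|\psi_1\otimes\psi_2\|\le\|\psi_1\|\,\|\psi_2\|$ (Wegge--Olsen, Corollary T.6.2); both suffice since only the upper bound is needed.
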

\begin{proof}
Since for $j=1,2$, we have
\begin{align}
0 &= \lim_{k \to \infty} \|\varphi_j^k - \varphi_j\|_{D_{\tilde \E_j}}^2 \nonumber \\
	&= \lim_{k \to \infty} \left( \|\varphi_j^k - \varphi_j\|_{\tilde \HH_j}^2 +  \|D_{\tilde \E_j} \varphi_j^k - D_{\tilde \E_j}\varphi_j\|_{\tilde \HH_j}^2 \right), \label{eq normen}
\end{align}
both terms in \eqref{eq normen} tend to zero as $k \to \infty$. Let us rewrite \eqref{eq limiet} in a way that allows us to use this fact. By definition of the norm $\| \cdot \|_{D_{\tilde \E_1 \hat \otimes \tilde \E_2}}$, we have
\begin{multline*}
\bigl\|  \varphi_1^k \otimes \varphi_2^k - \varphi_1 \otimes \varphi_2 \bigr\|_{D_{\tilde \E_1 \hat \otimes \tilde \E_2}}^2 = \\
\bigl\| \varphi_1^k \otimes \varphi_2^k - \varphi_1 \otimes \varphi_2 \bigr\|_{\tilde \HH_1 \hat \otimes \tilde \HH_2}^2 + \\
\bigl\| D_{\tilde \E_1} \varphi_1^k \otimes \varphi_2^k - D_{\tilde \E_1} \varphi_1 \otimes \varphi_2 +
 \tilde  \varphi_1^k \otimes D_{\tilde \E_2} \varphi_2^k
 - \varphi_1 \otimes D_{\tilde \E_2} \varphi_2 \bigr\|_{\tilde \HH_1 \hat \otimes \tilde \HH_2}^2.
\end{multline*}
Using the triangle inequality and the fact that $\|\psi_1 \otimes \psi_2\|_{\tilde \HH_1 \hat \otimes \tilde \HH_2} \leq \|\psi_1\|_{\tilde \HH_1}  \|\psi_1\|_{\tilde \HH_1}$ for all $\psi_j \in \tilde \HH_j$ (this follows from the fact that any $C^*$-norm on a tensor product is \emph{subcross}, see \cite{WO}, Corollary T.6.2), we see that this number is less than or equal to
\begin{multline} \label{eq veel normen}
\Bigl( \| \varphi_1^k - \varphi_1\|_{\tilde \HH_1} \|\varphi_2^k\|_{\tilde \HH_2} +
	\|\varphi_{1}\|_{\tilde \HH_1} \|\varphi_2^k - \varphi_2\|_{\tilde \HH_2} \Bigr)^2 + \\
\Bigl( \| D_{\tilde \E_1} \varphi_1^k - D_{\tilde \E_1} \varphi_1 \|_{\tilde \HH_1} \| \varphi_2^k \|
		+ \|D_{\tilde \E_1} \varphi_1\|_{\tilde \HH_1} \|\varphi_2^k - \varphi_2\|_{\tilde \HH_2} + \Bigr. \\
\Bigl. \| \varphi_1^k - \varphi_1\|_{\tilde \HH_1} \| D_{\tilde \E_2} \varphi_2^k\|_{\tilde \HH_2}	 +
	\|\varphi_{1}\|_{\tilde \HH_1} \| D_{\tilde \E_2} \varphi_2^k  - D_{\tilde \E_2} \varphi_2\|_{\tilde \HH_2}
 \Bigr)^2.
\end{multline}
By the observation at the beginning of this proof, all terms in \eqref{eq veel normen} contain a factor that goes to zero as $k \to \infty$. Since the other factors are bounded functions of $k$, the claim follows.
\end{proof}

\subsection[The induction map $\DInd_K^G$]{The right-hand diagram: a decomposition of the induction map $\DInd_K^G$} \label{right}

In this subsection, we complete the proof of Theorem \ref{natKG} by proving commutativity of the right-hand diagram in \eqref{diagram}.
In this proof, we will use commutativity of the top, middle and bottom diagrams in the case where $N$ is a point.

But first, we give the following description of the map $\DInd_K^G$.
Let $V$ be a finite-dimensional unitary representation of $K$, and let ${\D}^V$ be the Dirac operator defined in \eqref{DV}. The closure of this operator is an unbounded self-adjoint operator on the space of $L^2$-sections of $E_V$, which is odd with respect to the $\Z_2$-grading. This space of $L^2$-sections is isomorphic to the space $\bigl(L^2(G) \otimes \Delta_{d_{\p}} \otimes V\bigr)^K$, where the $K$-action is again defined by \eqref{Kact} (with smooth functions replaced by $L^2$-functions, of course). Let $b$ be a normalising function, so that we have the class
\[
\bigl[\bigl(L^2(G) \otimes \Delta_{d_{\p}} \otimes V\bigr)^K, b({\D}^V), \pi_{G/K} \bigr] \in K_{0}^{G}(G/K).
\]
Here $\pi_{G/K}$ denotes the representation of $C_0(G/K)$ on $L^2(G/K, E_V)$ as multiplication operators.
\begin{lemma} \label{indass}
In this situation, we have
\[
\DInd_K^G[V] = \mu_{G/K}^{G}\bigl[\bigl(L^2(G) \otimes \Delta_{d_{\p}} \otimes V\bigr)^K, b({\D}^V), \pi_{G/K} \bigr] \quad \in K_0(C^*_r(G)).
\]
\end{lemma}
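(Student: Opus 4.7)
The plan is to unwind both sides of the claimed equality and exhibit an isomorphism of Hilbert $C^*_r(G)$-modules that intertwines the relevant bounded Fredholm operators, thereby showing the two classes in $KK_0(\C, C^*_r(G)) = K_0(C^*_r(G))$ coincide. I will work in the unbounded picture of the assembly map recalled in Subsection \ref{sec bottom}, since the operator $\D^V$ is naturally unbounded and this avoids unnecessary bounded transforms. Concretely, Kucerovsky's construction applied to the cycle $(L^2(G/K, E_V), \D^V, \pi_{G/K})$ produces the Hilbert $C^*_r(G)$-module $\tilde \E = \tilde\pi(p)\widetilde{L^2(G/K, E_V)}$, where $p \in C_c(G/K \times G)$ is the projection built from a cutoff function $h \in C_c(G/K)$ satisfying $\int_G h^2(gx)\,dg = 1$, equipped with the closure of $\tilde\pi(p)\varphi \mapsto \tilde\pi(p)(\D^V \circ \varphi)$.

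The core step is to construct an explicit isomorphism of Hilbert $C^*_r(G)$-modules
\[
\Phi: \tilde \E \xrightarrow{\cong} \bigl(C^*_r(G) \otimes \Delta_{d_{\p}} \otimes V\bigr)^K.
\]
I would use the isomorphism $L^2(G/K, E_V) \cong (L^2(G) \otimes \Delta_{d_{\p}} \otimes V)^K$ (cf.\ \eqref{secEV}) to view elements of $\tilde \E$ as $K$-equivariant functions $G \times G \to \Delta_{d_{\p}} \otimes V$, and define $\Phi$ on the dense subspace of compactly supported elements by an averaging formula incorporating $h$ that converts the dependence on the first $G$-factor into a $C^*_r(G)$-coefficient. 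The essential verification is that $\Phi$ preserves the $C^*_r(G)$-valued inner products on this dense subspace; this reduces, after expanding the formula \eqref{eq inprod H tilde} for the inner product on $\widetilde{L^2(G/K, E_V)}$ and the convolution formula for the inner product on $C^*_r(G)$, to the normalisation identity $\int_G h^2(gx)\,dg = 1$ together with left-invariance of Haar measure. Density and $C^*_r(G)$-linearity then extend $\Phi$ to the desired isomorphism.

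Once the modules are matched, checking that $\Phi$ intertwines the two incarnations of $b(\D^V)$ is routine: the operator $\D^V$ in \eqref{DV} is built from left-invariant vector fields on $G$ and the Clifford and $V$-actions on $\Delta_{d_{\p}} \otimes V$, so in particular it commutes with the right-regular $C^*_r(G)$-action and acts by the same local formula on the common dense core in both modules. Hence the unbounded Kasparov cycles $(\tilde \E, b(\D^V))$ and $((C^*_r(G) \otimes \Delta_{d_{\p}} \otimes V)^K, b(\D^V))$ are isomorphic, so their classes in $K_0(C^*_r(G))$ agree, giving the lemma. The main obstacle is the isometry verification in the middle step: one must reconcile Kucerovsky's somewhat opaque inner product on $\tilde \E$ with the transparent convolution inner product on $C^*_r(G) \otimes \Delta_{d_{\p}} \otimes V$, and this is where the specific role of the cutoff $h$ is essential.
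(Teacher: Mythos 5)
Your proposal is correct in outline but takes a genuinely different and more circuitous route than the paper. You propose to work in Kucerovsky's unbounded picture, where the module is presented as $\tilde\E = \tilde\pi(p)\widetilde{L^2(G/K,E_V)}$ built from a cutoff function $h$ and projection $p$, and then to construct an averaging isomorphism $\Phi$ onto $(C^*_r(G)\otimes\Delta_{d_\p}\otimes V)^K$ whose isometry ultimately rests on the normalisation $\int_G h^2(gx)\,dg=1$. The paper instead stays entirely in the ordinary bounded picture of the assembly map: the module $\E$ is by definition the completion of the compactly supported subspace $\HH_c = (C_c(G)\otimes\Delta_{d_\p}\otimes V)^K$ with respect to the $C^*_r(G)$-valued inner product $(\xi_1,\xi_2)_\E(g) = (\xi_1, g\cdot\xi_2)_\HH$. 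A one-line computation shows this equals the convolution $\bigl(f*(f')^*\bigr)(g)$, which is precisely the inner product on $C^*_r(G)$; a similarly short check identifies the module structure with left convolution. No cutoff function or projection appears at all. Since $\D^V$ is built from left-invariant vector fields and acts by the same formula on the common dense subspace, the bounded transforms agree directly. Your route will work (Kucerovsky's $\tilde\E$ is isomorphic to $\E$ via the map $\alpha$, and the cutoff data cancels as you anticipate), but it introduces machinery that the lemma does not need and whose eventual cancellation obscures what is really a tautological identification. If you do pursue the unbounded picture, be careful at the last step: what sits on $\tilde\E$ before the bounded transform is the operator $\tilde D$ defined by $\tilde D(\tilde\pi(p)\varphi) = \tilde\pi(p)(\D^V\circ\varphi)$, so the intertwining you call routine must be checked against this operator and not against $b(\D^V)$ on the nose; only after transporting $\tilde D$ across $\Phi$ and then applying $b$ do you compare with the cycle \eqref{IGK}.
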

\begin{proof}
Write
\[
[\E, F_{\E}] :=   \mu_{G/K}^{G}\bigl[\bigl(L^2(G) \otimes \Delta_{d_{\p}} \otimes V\bigr)^K, b({\D}^V), \pi_{G/K} \bigr].
\]
Since the restriction of $F_{\E}$ to $\bigl(C_c(G) \otimes \Delta_{d_{\p}} \otimes V\bigr)^K$ is the restriction of $b({\D}^V)$ to this space, we only need to prove that
\begin{equation} \label{eq iso E}
\E = \bigl(C^*_r(G) \otimes \Delta_{d_{\p}} \otimes V\bigr)^K
\end{equation}
as Hilbert $C^*_r(G)$-modules.

To prove this equality, we note that for all $f, f' \in (L^2(G))_c$ and all $g \in G$,
\[
(f,f')_{\E}(g) = (f, g \cdot f')_{L^2(G)} = \bigl(f * (f')^*\bigr)(g),
\]
as one easily computes. This implies that the $C_r^*G$-valued inner product on $\E$ is the same as the one on $\bigl(C^*_r(G) \otimes \Delta_{d_{\p}} \otimes V\bigr)^K$.

The $C^*_r(G)$-module structure of $\E$ is given by
\[
\begin{split}
h\cdot (f\otimes \delta \otimes v) & = \int_G h(g) g\cdot (f\otimes \delta \otimes v) dg \\
	&= (h*f) \otimes \delta \otimes v,
\end{split}
\]
for all $h \in C_c(G)$, $f \in L^2(G)$, $\delta \in \Delta_{d_{\p}}$ and $v \in V$. Hence the equality \eqref{eq iso E} includes the $C^*_r(G)$-module structure.
\end{proof}

\emph{Proof of commutativity of the right-hand diagram.} Consider the vector bundles $V$ and $\{0\}$ over a point. Let $0_V: V \to \{0\}$ be the only possible operator between (the spaces of smooth sections of) these bundles. It defines a class $[0_V] = [V \oplus \{0\}, 0_V] \in K_0^{K}(\mathrm{pt})$, and we have
\[
\mu_{\mathrm{pt}}^K[0_V] = [V] \quad \in R(K).
\]
Now we find that
\[
\DInd_K^G[V] = \mu_{G/K}^{G \times K \times K}\bigl[\bigl(L^2(G) \otimes \Delta_{d_{\p}} \otimes V\bigr)^K, b({\D}^V), \pi_{G/K} \bigr]
\]
by Lemma \ref{indass},
\begin{equation} \label{eq nat diff op}
= \mu_{G/K}^G \circ V_{\Delta(K)} \circ \Res^{G \times K \times K}_{G \times \Delta(K)} [{\D}_{G,K} \otimes 1_V]
\end{equation}
by Corollary 3.13 in \cite{HL} and the fact that ${\D}^V$ is the restriction of ${\D}_{G,K} \otimes 1_V$ to $K$-invariant elements of
$C^{\infty}(G) \otimes \Delta_{d_{\p}} \otimes V$. Corollary 3.13 in \cite{HL} was proved for group actions with a compact orbit space, but the proof given there can easily be generalised to the general case.

By commutativity of the top, middle and bottom diagrams when $N$ is a point, \eqref{eq nat diff op} equals
\[
\begin{split}
&= \mu_{G/K}^G \circ V_{\Delta(K)} \circ \Res^{G \times K \times K}_{G \times \Delta(K)} \bigl( [{\D}_{G,K}] \times [0_V] \bigr) \\
&= R^0_K \circ \Res^{G \times K \times K}_{G \times \Delta(K)} \circ \mu_{G}^{G \times K}\bigl( [{\D}_{G,K}] \times [V] \bigr).
\end{split}
\]
\hfill $\square$

\begin{remark}
Supposing that $V$ is irreducible, we could also have applied the Borel--Weil(--Bott) theorem to realise the class $[V] \in R(K)$ as $\mu_{K/T}^K[{\D}_{i\lambda}]$, where $i\lambda$ is the highest weight of $V$, and ${\D}_{i\lambda}$ is the Dolbeault--Dirac operator on $K/T$ coupled to the usual line bundle that is used in the Borel--Weil theorem. We would then have used commutativity of the top, middle and bottom diagrams for $N=K/T$.
\end{remark}

\section{Dirac operators and the map $\KInd_K^G$} \label{Dirac}

This section is devoted to the proof of Proposition \ref{alphaDirac}. We will define an operator $\tilde {\D}_M^{{L^{2\omega}}}$ whose $K$-homology class is the image of the
class of ${\D}_N^{{L^{2\nu}}}$ under the map $\KInd_K^G$. Then we prove some general facts about principal symbols, and finally we use these facts to
show that ${\D}_M^{{L^{2\omega}}}$ and $\tilde {\D}_M^{{L^{2\omega}}}$ define the same class in $K$-homology, proving Proposition \ref{alphaDirac}.

Throughout this section, we will consider a class
\[
 [N, \nu, \Phi^N, {L^{2\nu}}, (\relbar, \relbar)_{{L^{2\nu}}}, \nabla^N, P^N] \in \CHPS(K),
\]
and we will write
\begin{multline*}
[M, \omega, \Phi^M, {L^{2\omega}}, (\relbar, \relbar)_{{L^{2\omega}}}, \nabla^M, P^M] := \\
\HInd_K^G [N, \nu, \Phi^N, {L^{2\nu}}, (\relbar, \relbar)_{{L^{2\nu}}}, \nabla^N, P^N] \in \CHPS(G).
\end{multline*}

\subsection{Another Dirac operator on $M$} \label{Dtilde}

Let us construct the differential operator $\tilde {\D}_M^{{L^{2\omega}}}$ mentioned in the introduction to this section.
Just like the $\Spin^c$-Dirac operator $\D_M^{{L^{2\omega}}}$, it acts on sections of the spinor bundle
\begin{equation}\label{eq S^M}
\SSS^M := P^M \times_{\Spin^c(d_M)} \Delta_{d_M} \to M,
\end{equation}
associated to the $\Spin^c$-structure $P^M$ defined in Subsection \ref{sec Spinc}.

In the definition of the operator $\tilde {\D}_M^{{L^{2\omega}}}$, we will use the following decomposition of the spinor bundle $\SSS^M$:
\begin{lemma} \label{SpinorM}
We have a $G$-equivariant isomorphism of vector bundles over $M$,
\[
\SSS^M \cong \bigl((G \times \Delta_{d_{\p}}) \boxtimes \SSS^N \bigr)/K,
\]
where $K$ acts on $(G \times \Delta_{d_{\p}}) \boxtimes \SSS^N$ by
\[
k\cdot\bigl( (g, \delta_{\p}) \otimes s^N \bigr) = (gk^{-1}, \widetilde{\Ad}(k)\delta_{\p}) \otimes k\cdot s^N,
\]
for $k \in K$, $g \in G$, $\delta_{\p} \in \Delta_{d_{\p}}$ and $s^N \in \SSS^N$.
\end{lemma}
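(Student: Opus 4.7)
The plan is to unpack the explicit construction of $P^M$ from Subsection \ref{sec Spinc} and then apply the standard tensor-product decomposition of the spinor representation under the inclusion of $H := \Spin^c(r_1) \times_{\U(1)} \Spin^c(r_2)$ into $\Spin^c(r_1+r_2)$. For any direct-sum $\Spin^c$-structure $P = (P_1 \times_{\U(1)} P_2) \times_H \Spin^c(r)$ built as in Lemma \ref{lem Spinc sum}, the associated spinor bundle decomposes as
\[
P \times_{\Spin^c(r)} \Delta_r \;\cong\; \bigl(P_1 \times_{\Spin^c(r_1)} \Delta_{r_1}\bigr) \otimes_M \bigl(P_2 \times_{\Spin^c(r_2)} \Delta_{r_2}\bigr),
\]
because $\Delta_r \cong \Delta_{r_1} \otimes \Delta_{r_2}$ as an $H$-module (the diagonal $\U(1)$ acting by scalars on both factors cancels in the balanced tensor product over $\U(1)$).

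Applied to $P^M = \bigl(G \times_K(N \times \Spin^c(\p))\bigr) \times_{\U(1)} \bigl(G \times_K P^N\bigr) \times_H \Spin^c(d_M)$, with $P_1 = G \times_K (N \times \Spin^c(\p))$ and $P_2 = G \times_K P^N$, this yields
\[
\SSS^M \;\cong\; \bigl(P_1 \times_{\Spin^c(\p)} \Delta_{d_{\p}}\bigr) \otimes_M \bigl(P_2 \times_{\Spin^c(d_N)} \Delta_{d_N}\bigr).
\]
Next, I would identify each factor. Using Lemma \ref{lem Spinc prod}, the first simplifies to $G \times_K (N \times \Delta_{d_{\p}})$, while the second is $G \times_K \SSS^N$, where $\SSS^N := P^N \times_{\Spin^c(d_N)} \Delta_{d_N}$. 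Both are of the form $G \times_K E$ for a $K$-equivariant bundle $E \to N$, so the fibered tensor product over $M$ collapses to
\[
\SSS^M \;\cong\; G \times_K \bigl(\Delta_{d_{\p}} \otimes_N \SSS^N\bigr),
\]
and rewriting the right-hand side as the quotient of an external product over $G \times N$ by the diagonal $K$-action gives the form stated in the lemma, namely $\bigl((G \times \Delta_{d_{\p}}) \boxtimes \SSS^N\bigr)/K$ with the prescribed $K$-action.

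At every step the isomorphisms are manifestly $G$-equivariant because $G$ acts only by left translation on the $G$-factor, commuting with the $K$- and $\Spin^c$-quotients; this gives the required $G$-equivariance. The main obstacle will be bookkeeping: tracking the $\U(1)$-cancellation inside the $H$-action on $\Delta_{d_{\p}} \otimes \Delta_{d_N}$ and checking that the various fibered products commute as expected with the $K$-quotient that turns $G \times N$ into $M$. Grading conventions (graded versus ungraded tensor product) affect the induced $\Z_2$-grading on $\SSS^M$ but not the underlying vector-bundle isomorphism, and so are harmless for the statement of the lemma.
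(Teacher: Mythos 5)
Your proposal is correct and follows essentially the same route as the paper's proof: the paper likewise starts from the explicit formula for $P^M$, uses the $H$-equivariant isomorphism $\Delta_{d_M} \cong \Delta_{d_{\p}} \otimes \Delta_{d_N}$ to split the associated spinor bundle into a tensor product of spinor bundles for $P^{G/K}_M$ and $G \times_K P^N$, and then simplifies each factor and collapses the tensor product of $K$-quotients into a single $K$-quotient via $E/K \otimes F/K \cong (E \otimes F)/K$. The paper is a bit more explicit (it writes out the chain of isomorphisms and the resulting map on representatives), but your argument is structurally identical.
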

\begin{proof}
We have the following chain of isomorphisms:
\begin{equation} \label{eq chain iso}
\begin{split}
\SSS^M &\cong \bigl(P_M^{G/K} \times_{\U(1)} (G \times_K P^N) \bigr) \times_H \Delta_{d_{\p}} \otimes \Delta_{d_N} \\
	&\cong \bigl(P^{G/K}_M \times_{\Spin^c(d_{\p})}\Delta_{d_{\p}} \bigr) \otimes \bigl(G \times_K P^N \times_{\Spin^c(d_N)} \Delta_{d_N} \bigr) \\
	&\cong (G \times N \times \Delta_{d_{\p}})/K \otimes (G \times \SSS^N)/K \\
	&\cong \bigl((G \times \Delta_{d_{\p}}) \boxtimes \SSS^N \bigr)/K.
\end{split}
\end{equation}

The first isomorphism in \eqref{eq chain iso} is induced by the $H$-equivariant isomorphism $\Delta_{d_M} \cong \Delta_{d_{\p}} \otimes \Delta_{d_N}$.

The second isomorphism is given by
\[
\bigl[p_M^{G/K}, [g,p^N], \delta_{\p} \otimes \delta_N \bigr] \mapsto [p^{G/K}_M, \delta_{\p}] \otimes \bigl[ [g, p^N], \delta_N \bigr],
\]
for all $p_M^{G/K} \in P_M^{G/K}$, $g \in G$, $p^N \in P^N$, $\delta_{\p} \in \Delta_{d_{\p}}$ and $\delta_{N} \in \Delta_{d_N}$.

The third isomorphism is the obvious one, given the definitions of $P^{G/K}_M$ and $\SSS^N$.

Finally, the fourth isomorphism is a special case of the isomorphism
\[
E/G \otimes F/G \cong (E \otimes F)/G,
\]
if $H$ is a group acting freely  on a manifold $M$, and $E \to M$ and $F \to M$ are $G$-vector bundles.

Explicitly, the isomorphism \eqref{eq chain iso} is given by
\[
\bigl[[g,n,a], [g,p^N], \delta_{d_{\p}} \otimes \delta_N \bigr] \mapsto \bigl[ (g, a\delta_{\p}) \otimes [p^N, \delta_N] \bigr],
\]
for $g \in G$, $n \in N$, $a \in \Spin^c(\p)$, $p^N \in P^N$, $\delta_{\p} \in \Delta_{d_{\p}}$ and $\delta_{N} \in \Delta_{d_N}$.
\end{proof}

Next, let ${\D}_{G,K}$ be the operator defined on page \pageref{DGK}, and consider the operator
\begin{multline*}
{\D}_{G,K} \otimes 1 + 1 \otimes {\D}_N^{{L^{2\nu}}}: \Gamma^{\infty}\bigl(G \times N, (G \times \Delta_{d_{\p}}) \boxtimes \SSS^N\bigr) \to \\
	\Gamma^{\infty}\bigl(G \times N, (G \times \Delta_{d_{\p}}) \boxtimes \SSS^N \bigr),
\end{multline*}
which is odd with respect to the grading on the tensor product $(G \times \Delta_{d_{\p}}) \boxtimes \SSS^N $ induced by the gradings on $\Delta_{d_{\p}}$ and $\SSS^N$.
Because the operators ${\D}_{G, K}$ and ${\D}_N^{{L^{2\nu}}}$ are $K$-equivariant, we obtain an operator\index{daaaaaa@$\tilde {\D}_M^{{L^{2\omega}}}$}
\begin{equation} \label{eq tilde Dirac}
\tilde {\D}_M^{{L^{2\omega}}} := ({\D}_{G,K} \otimes 1 + 1 \otimes {\D}_N^L)^K
\end{equation}
on
\[
\begin{split}
\Gamma^{\infty}\bigl(G \times N, (G \times \Delta_{d_{\p}}) \boxtimes \SSS^N \bigr)^K &\cong
	\Gamma^{\infty}\bigl(M, \bigl((G \times \Delta_{d_{\p}}) \boxtimes \SSS^N \otimes \bigr)/K \bigr) \\
	&\cong \Gamma^{\infty}(M, \SSS^M),
\end{split}
\]
by Lemma \ref{SpinorM}.

The importance of the operator $\tilde {\D}_M^{{L^{2\omega}}} $ lies in the following fact:
\begin{lemma} \label{alphaDtilde}
The image of the class $[{\D}_N^{{L^{2\nu}}}] \in K_0^K(N)$ under the map $\KInd_K^G$ is the class of $\tilde {\D}_M^{{L^{2\omega}}}$ in $K_0^G(M)$.
\end{lemma}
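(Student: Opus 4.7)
The plan is to unfold the definition of $\KInd_K^G$ from the left column of diagram \eqref{diagram} and track the cycle representing $[\D_N^{L^{2\nu}}]$ through each of the three intermediate steps, showing that it ends up as the cycle $\tilde \D_M^{L^{2\omega}}$ of \eqref{eq tilde Dirac}. Recall that $\KInd_K^G$ is the composition
\[
K_0^K(N) \xrightarrow{[\D_{G,K}]\times \relbar} K_0^{G\times K\times K}(G\times N)
 \xrightarrow{\Res^{G\times K\times K}_{G\times \Delta(K)}} K_0^{G\times \Delta(K)}(G\times N)
 \xrightarrow{V_{\Delta(K)}} K_0^G(M),
\]
so that there are three things to identify.

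First, I would compute the external Kasparov product $[\D_{G,K}]\times[\D_N^{L^{2\nu}}]$. Both factors are represented by bounded transforms of odd, essentially self-adjoint, first-order elliptic operators, acting on sections of the graded bundles $G\times \Delta_{d_\p}\to G$ and $\SSS^N\to N$. By the standard recipe for external products of such unbounded cycles (cf.\ Baaj--Julg), the product is represented on the external tensor product $(G\times\Delta_{d_\p})\boxtimes \SSS^N\to G\times N$, graded by the tensor product of the gradings, by the closure of
\[
\D_{G,K}\otimes 1 + 1\otimes \D_N^{L^{2\nu}}.
\]
The $G\times K\times K$-equivariance is transparent from the fact that $\D_{G,K}$ is $(G\times K)$-equivariant on $G$ and $\D_N^{L^{2\nu}}$ is $K$-equivariant on $N$; the two $K$ factors act independently, one via $\widetilde{\Ad}$ on the $\Delta_{d_\p}$ slot and by right multiplication on $G$, and the other via the $K$-action on $N$ and on $\SSS^N$ (through $P^N$). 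The representation of $C_0(G\times N)\cong C_0(G)\otimes C_0(N)$ by multiplication on the external tensor product completes the cycle.

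Second, the restriction $\Res^{G\times K\times K}_{G\times\Delta(K)}$ does not touch the underlying Hilbert module, the operator, nor the representation: it only remembers the $G\times\Delta(K)$-action, which is exactly the action $k\cdot(g,n)=(gk^{-1},kn)$ together with the diagonal action on $\Delta_{d_\p}\otimes \SSS^N$ described in Lemma \ref{SpinorM}. So after this step we still have the cycle $(G\times \Delta_{d_\p})\boxtimes \SSS^N$ with the sum operator, now $G\times \Delta(K)$-equivariantly.

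Third, the map $V_{\Delta(K)}$, which implements naturality of the assembly map for the epimorphism $G\times \Delta(K)\twoheadrightarrow G$ with kernel $\Delta(K)$ (Subsection \ref{sec top}), consists in passing to $\Delta(K)$-invariants on the base and on the bundles. Since the free and proper action of $\Delta(K)\cong K$ on $G\times N$ has quotient $M$, this invariant construction, together with Lemma \ref{SpinorM}, identifies
\[
\bigl(\Gamma^\infty((G\times \Delta_{d_\p})\boxtimes \SSS^N)\bigr)^{\Delta(K)} \cong \Gamma^\infty(M,\SSS^M),
\]
and converts $C_0(G\times N)^{\Delta(K)}$-multiplications into $C_0(M)$-multiplications. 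Because $\D_{G,K}\otimes 1+1\otimes \D_N^{L^{2\nu}}$ is $K$-equivariant, it descends to its restriction to invariants, which by \eqref{eq tilde Dirac} is exactly $\tilde\D_M^{L^{2\omega}}$. This produces a representative of $\KInd_K^G[\D_N^{L^{2\nu}}]$ that coincides with $[\tilde\D_M^{L^{2\omega}}]\in K_0^G(M)$.

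The main subtlety I expect is in the first step: verifying that the external product is correctly represented by the sum operator $\D_{G,K}\otimes 1+1\otimes \D_N^{L^{2\nu}}$ in the equivariant, non-cocompact setting (the $G$-factor is noncompact). This requires checking the Kucerovsky-type conditions (domain, graded commutator, positivity) for the sum to be an unbounded representative of the Kasparov product, which for first-order operators on a product and with anticommuting Clifford actions is standard but needs to be invoked carefully in the equivariant unbounded framework used in Subsection \ref{sec bottom}. Once that is in place, the subsequent identifications are formal consequences of Lemma \ref{SpinorM} and of the definition of $V_{\Delta(K)}$.
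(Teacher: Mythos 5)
Your proof is correct and follows essentially the same route as the paper's: compute the external Kasparov product as the sum operator, observe that restriction to $G\times\Delta(K)$ is formal, then descend to $\Delta(K)$-invariants to recover $\tilde\D_M^{L^{2\omega}}$. The paper condenses this into two citations — Theorem 10.8.7 of Higson--Roe for the external product and Corollary 3.13 of \cite{HL} for the (restriction-plus-descent) step — while you unpack both; your appeal to the Baaj--Julg unbounded picture for the product is the alternative the paper itself flags in a footnote.
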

\begin{proof}
By Theorem 10.8.7 from \cite{HR},\footnote{This can also be seen in the unbounded picture of $\KK$-theory.} the Kasparov product $[{\D}_{G,K}] \times [{\D}_N^{{L^{2\nu}}}] \in K_0^{G \times K \times K}(G \times N)$ is the class of the operator ${\D}_{G,K} \otimes 1 + 1 \otimes {\D}_N^{{L^{2\nu}}}$ on $(G \times \Delta_{d_{\p}}) \boxtimes \SSS^N  $. It then follows from Corollary 3.13 in \cite{HL} that the latter class is mapped to the class of $\tilde {\D}_M^{{L^{2\omega}}}$.
\end{proof}
Therefore, Proposition \ref{alphaDirac} follows if we can prove that $\tilde {\D}_M^{{L^{2\omega}}} $ and ${\D}_M^{{L^{2\omega}}}$ define the same $K$-homology class. We prove this fact by showing that their principal symbols are equal.

\subsection{Principal symbols} \label{sec princsymb}

This subsection contains some general facts about the principal symbols of differential operators that are constructed from other differential operators. Their proofs are straightforward.

\subsubsection*{Tensor products}
First, let $X$ and $Y$ be smooth manifolds, and let $E \to X$ and $F \to Y$ be vector bundles.
Let $D_E: \Gamma^{\infty}(E) \to \Gamma^{\infty}(E)$ and
$D_F: \Gamma^{\infty}(F) \to \Gamma^{\infty}(F)$ be differential operators of the same order $d$.
Consider the exterior tensor product $E \boxtimes F \to X \times Y$, and let $D := D_E \otimes 1 + 1 \otimes D_F$ be the operator on $\Gamma^{\infty}(E \boxtimes F)$ given by
\[
D(s\boxtimes t) = D_Es\boxtimes t + s \boxtimes D_Ft,
\]
for $s \in \Gamma^{\infty}(E)$ and $t \in \Gamma^{\infty}(F)$.

As before, we denote the cotangent bundle projection of a manifold $M$ by $\pi_M$. The principal symbols of the operators $D_E$, $D_F$ and $D$ are vector bundle homomorphisms
\[
\begin{split}
\sigma_{D_E}&:\pi_X^*E \to \pi_X^*E; \\
\sigma_{D_F}&:\pi_Y^*F \to \pi_Y^*F; \\
\sigma_{D}&:\pi_{X\times Y}^*(E \boxtimes F) \to \pi_{X\times Y}^*(E \boxtimes F).
\end{split}
\]
Let
\[
\theta:  \pi_{X\times Y}^*(E \boxtimes F) \to \pi_X^*E \boxtimes \pi_Y^*F
\]
be the isomorphism of vector bundles over $T^*(X \times Y) \cong T^*X \times T^*Y$ given by
\[
\theta\bigl((\xi, \eta) , (e\otimes f)\bigr) =  (\xi,e) \otimes (\eta, f),
\]
for $x \in X$, $y \in Y$, $\xi \in T^*_xX$, $\eta \in T^*_yY$, $e \in E_x$ and $f \in F_y$.
The first fact about principal symbols that we will use is:
\begin{lemma} \label{tensor} \index{principal symbol!of a tensor product operator}
The following diagram commutes:
\[
\xymatrix{
\pi_{X\times Y}^*(E \boxtimes F) \ar[rr]^{\sigma_{D}} \ar[d]_{\theta}^{\cong} & & \pi_{X\times Y}^*(E \boxtimes F) \ar[d]_{\theta}^{\cong}\\
\pi_X^*E \boxtimes \pi_Y^*F \ar[rr]_{\sigma_{D_E} \otimes 1 + 1 \otimes \sigma_{D_F}} & & \pi_X^*E \boxtimes \pi_Y^*F.
}
\]
\end{lemma}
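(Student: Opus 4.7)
The plan is to verify the diagram fibrewise at an arbitrary point $(\xi,\eta) \in T^*_{(x,y)}(X \times Y)$, which under the natural splitting corresponds to $\xi \in T^*_xX$ and $\eta \in T^*_yY$. Since both sides are $C^\infty(T^*(X \times Y))$-linear bundle homomorphisms, it is enough to test on decomposable elements $(e \otimes f) \in E_x \otimes F_y$, realised as $(s \boxtimes t)(x,y)$ for local sections $s \in \Gamma^\infty(E)$, $t \in \Gamma^\infty(F)$ with $s(x) = e$, $t(y) = f$.

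The cleanest route is to use the characterisation of the principal symbol of a differential operator $D$ of order $d$ via oscillatory functions: for any $\varphi \in C^\infty$ with $d\varphi(z_0) = \zeta$,
\[
\sigma_D(\zeta)(r(z_0)) = \lim_{\tau \to \infty} \tau^{-d}\, e^{-i\tau \varphi(z_0)} D(e^{i\tau \varphi} r)(z_0),
\]
up to a fixed normalising constant. I would choose $f_1 \in C^\infty(X)$ and $f_2 \in C^\infty(Y)$ with $df_1(x) = \xi$, $df_2(y) = \eta$, and set $f(x',y') := f_1(x') + f_2(y')$, so that $df(x,y) = (\xi,\eta)$ under the splitting. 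Because $e^{i\tau f} = e^{i\tau f_1} \cdot e^{i\tau f_2}$ and the operator $D_E \otimes 1$ acts only in the $X$-variable while $1 \otimes D_F$ acts only in the $Y$-variable, a direct computation yields
\[
e^{-i\tau f(x,y)} D\bigl(e^{i\tau f}(s \boxtimes t)\bigr)(x,y)
= \bigl(e^{-i\tau f_1(x)} D_E(e^{i\tau f_1} s)(x)\bigr) \otimes t(y)
+ s(x) \otimes \bigl(e^{-i\tau f_2(y)} D_F(e^{i\tau f_2} t)(y)\bigr).
\]
Dividing by $\tau^d$ and passing to the limit, the left-hand side gives $\sigma_D((\xi,\eta))(s(x) \otimes t(y))$, while the right-hand side tends to $\sigma_{D_E}(\xi)s(x) \otimes t(y) + s(x) \otimes \sigma_{D_F}(\eta)t(y)$. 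Under the identification $\theta$, this is exactly the asserted formula. An equally valid shortcut is to work in product coordinate charts and write $D_E = \sum_{|\alpha|\le d} A_\alpha \partial_x^\alpha$, $D_F = \sum_{|\beta|\le d} B_\beta \partial_y^\beta$; the principal symbol is the homogeneous degree-$d$ part in the cotangent variables, which immediately splits as a sum over pure $x$-derivatives and pure $y$-derivatives.

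The main obstacle is purely bookkeeping: keeping track of the identification $\theta$, handling the case where one of $D_E$, $D_F$ has order strictly less than $d$ (in which case the corresponding principal symbol vanishes and only one summand survives the limit), and checking that the constants match up between the oscillatory-function convention and the local coordinate convention. There is no conceptual difficulty once these conventions are fixed.
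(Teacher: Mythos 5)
The paper does not actually give a proof of this lemma: it appears in the preamble of Subsection \ref{sec princsymb} under the blanket remark that ``Their proofs are straightforward.'' So there is no paper argument to compare against; what matters is whether your argument is sound, and it is. The key mechanism you exploit is the factorisation $e^{i\tau f} = e^{i\tau f_1}\cdot e^{i\tau f_2}$ together with the observation that $D_E\otimes 1$ commutes with multiplication by $e^{i\tau f_2}$ (a function of $Y$ only) and $1\otimes D_F$ commutes with multiplication by $e^{i\tau f_1}$; this makes the conjugated operator split exactly as you wrote, with no cross terms, and the $\tau^{-d}$ limit then reads off as $\sigma_{D_E}(\xi)e\otimes f + e\otimes\sigma_{D_F}(\eta)f$, which is the asserted formula after unwinding $\theta$. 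Two small bookkeeping points you already flag, and which are indeed harmless here: (i) both $D_E$ and $D_F$ are assumed to have the same order $d$ (in the application they are first-order Dirac operators), so neither summand dies in the limit; (ii) the normalising constant in the oscillatory characterisation is the same on both sides and cancels. One cosmetic warning: you use the letter $f$ both for the element of $F_y$ and for the phase function $f = f_1 + f_2$; renaming the latter to $\varphi$ (to match the formula you quoted) would avoid the clash.
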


\subsubsection*{Pullbacks}

Next, let $X$ and $Y$ again be smooth manifolds, and let $q: E \to Y$ be a vector bundle.
Let $f: X \to Y$ be a smooth map. (We will later apply this to the situation $X = G \times N$, $Y = M$, $E = \SSS^M \otimes {L^{2\omega}}$, and $f$ the quotient map.)
Let $D_E$ be a differential operator on $E$, of order $d$.
Let $D_{f^*E}$ be a differential operator on the pullback bundle $f^*E$ with the property that for all $s \in \Gamma^{\infty}(E)$,
\[
D_{f^*E}(f^*s) = f^*(D_E s).
\]

Consider the vector bundle
\[
f^*(T^*Y \oplus E) \to X.
\]
It consists of triples $(x,\xi, e) \in X \times T^*Y \times E$, with $f(x) = \pi_Y(\xi) = q(e)$. Using this vector bundle, we write down the diagram
\begin{equation} \label{eq diag pullback}
\xymatrix{
\pi_Y^*E \ar[r]^{\sigma_{D_E}} & \pi_Y^*E\\
f^*(T^*Y \oplus E) \ar[u]^a \ar[d]_{b} \ar[r]^{\widetilde{\sigma_{D_E}}} & f^*(T^*Y \oplus E) \ar[u]^a \ar[d]_{b}\\
\pi_X^*(f^*E) \ar[r]^{\sigma_{D_{f^*E}}}& \pi_X^*(f^*E),
}
\end{equation}
where for all $(x, \xi, e) \in f^*(T^*Y \oplus E)$,
\[
\begin{split}
a(x, \xi, e) &:= (\xi, e) \\
b(x, \xi, e) &:= \bigl((T_xf)^*\xi,x, e\bigr) \\
\widetilde{\sigma_{D_E}}(x, \xi, e) &:= \bigl(x, \sigma_{D_E}(\xi, e)\bigr).
\end{split}
\]
\begin{lemma} \label{lem pullback} \index{principal symbol!of a pullback operator}
Diagram \eqref{eq diag pullback} commutes.
\end{lemma}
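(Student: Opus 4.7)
The plan is to verify the two rectangles of diagram \eqref{eq diag pullback} separately. The upper rectangle is essentially tautological: the map $a$ simply forgets the $X$-coordinate $x$, while $\widetilde{\sigma_{D_E}}$ acts on $(x,\xi,e)$ by keeping $x$ and applying $\sigma_{D_E}$ to the pair $(\xi, e)$. So $a \circ \widetilde{\sigma_{D_E}}(x,\xi,e) = \sigma_{D_E}(\xi,e) = \sigma_{D_E} \circ a(x,\xi,e)$. The fibres match because $\sigma_{D_E}(\xi, e)$ sits in $E_{\pi_Y(\xi)} = E_{f(x)}$, which is precisely the fibre over $f(x)$ that $a$ extracts from a triple with base point $x$.

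The real content is the lower rectangle, and here I would use the pointwise characterisation of the principal symbol of an order-$d$ operator $D$ on a bundle $E\to Z$:
\[
 \sigma_D(\xi)e \;=\; \frac{i^d}{d!}\, D(\phi^d s)(z),
\]
for any $\phi \in C^\infty(Z)$ with $\phi(z)=0$ and $d\phi_z = \xi$, and any section $s$ of $E$ with $s(z)=e$. Given $(x,\xi,e) \in f^*(T^*Y \oplus E)$, the key idea is to \emph{pull back the test function from $Y$}: choose $\psi \in C^\infty(Y)$ with $\psi(f(x))=0$ and $d\psi_{f(x)} = \xi$, and a local section $s$ of $E$ with $s(f(x)) = e$. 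Setting $\phi := f^*\psi = \psi \circ f$ gives $\phi(x)=0$ and $d\phi_x = (T_xf)^*\xi$, while $f^*s$ is a local section of $f^*E$ with $(f^*s)(x) = e$. Thus $\phi$ and $f^*s$ are admissible test data for computing $\sigma_{D_{f^*E}}\bigl((T_xf)^*\xi, e\bigr)$.

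The computation then reduces to the hypothesis $D_{f^*E}(f^* t) = f^*(D_E t)$ applied to $t = \psi^d s$:
\[
 \sigma_{D_{f^*E}}\bigl((T_xf)^*\xi\bigr) e
    \;=\; \tfrac{i^d}{d!} D_{f^*E}\bigl(\phi^d \cdot f^*s\bigr)(x)
    \;=\; \tfrac{i^d}{d!} D_{f^*E}\bigl(f^*(\psi^d s)\bigr)(x)
    \;=\; \tfrac{i^d}{d!} f^*\!\bigl(D_E(\psi^d s)\bigr)(x).
\]
Evaluating the last pullback at $x$ gives $\tfrac{i^d}{d!} D_E(\psi^d s)(f(x)) = \sigma_{D_E}(\xi)e$. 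Tracing through the definitions of $b$ and $\widetilde{\sigma_{D_E}}$ shows that this is exactly the identity $\sigma_{D_{f^*E}} \circ b = b \circ \widetilde{\sigma_{D_E}}$.

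I do not anticipate a real obstacle; the only point requiring care is the bookkeeping of fibres (making sure the base points in $X$, $Y$, $T^*X$, $T^*Y$ and the various pullbacks line up correctly), and checking that $\phi^d \cdot f^*s = f^*(\psi^d s)$ so that the intertwining hypothesis is literally applicable. The argument is genuinely local at a single point $x \in X$, so no global or analytic input beyond the defining property of $D_{f^*E}$ is needed.
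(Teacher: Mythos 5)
Your proof is correct, and since the paper explicitly omits the proof (remarking only that the facts in Subsection \ref{sec princsymb} are ``straightforward''), there is no in-paper argument to compare against; your version is the natural one. Both rectangles are handled correctly: the upper one is indeed a tautology once one unwinds the definitions of $a$ and $\widetilde{\sigma_{D_E}}$, and for the lower one the key observation --- that $\phi^d\cdot f^*s = f^*(\psi^d s)$ when $\phi = f^*\psi$, so the intertwining relation $D_{f^*E}\circ f^* = f^*\circ D_E$ can be applied literally to the test section --- is exactly the right move, and you correctly track that $d\phi_x = (T_xf)^*\xi$ while $(f^*s)(x) = e$, which is what the map $b$ produces. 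One small remark: the coefficient $i^d/d!$ in the pointwise characterisation of the symbol depends on the convention for $\sigma_D$, but since the same formula (and the same order $d$) is applied to both $D_E$ and $D_{f^*E}$, the constant cancels and nothing hinges on it. Equivalently, one could use the oscillatory characterisation $\sigma_D(\xi)e = \lim_{\lambda\to\infty}\lambda^{-d}e^{-i\lambda\phi}D(e^{i\lambda\phi}s)(z)$, which is the form the paper itself uses in the proof of Lemma \ref{lem symb DGK}; it runs through identically, since $e^{i\lambda\phi}f^*s = f^*(e^{i\lambda\psi}s)$. An implicit hypothesis worth making explicit is that $D_{f^*E}$ also has order $d$ (otherwise ``its principal symbol'' would mean the wrong thing); the paper tacitly assumes this, and in its applications $f$ is a submersion and $D_{f^*E}$ is constructed to have the same order, so no issue arises.
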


Rather than diagram \eqref{eq diag pullback}, we would prefer a diagram with a direct vector bundle homomorphism from $\pi_Y^*E$ to $\pi_X^*(f^*E)$
in it. It is however impossible to define such a map in general. The best we can do is to define it for each point $x \in X$ separately: let
\[
(b \circ a^{-1})_x: \pi_Y^*E|_{T^*_{f(x)}Y} \to \pi_X(f^*E)|_{T_x^*X}
\]
be the map
\[
(b \circ a^{-1})_x(\xi, e) = \bigl((T_xf)^*\xi, e\bigr).
\]
Using this map, we obtain the following statement, which is actually equivalent to Lemma \ref{lem pullback}.
\begin{corollary} \label{cor pullback}
For all $x \in X$, the following diagram commutes:
\[
\xymatrix{
\pi_Y^*E|_{T^*_{f(x)}Y} \ar[rr]^{\sigma_{D_E}|_{T^*_{f(x)}Y}} \ar[d]^{(b \circ a^{-1})_x} & & \pi_Y^*E|_{T^*_{f(x)}Y} \ar[d]^{(b \circ a^{-1})_x}\\
\pi_X^*(f^*E)|_{T_x^*X} \ar[rr]^{\sigma_{D_{f^*E}}|_{T_x^*X}} & & \pi_X^*(f^*E)|_{T_x^*X}.
}
\]
\end{corollary}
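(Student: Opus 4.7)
The plan is to derive this corollary directly from Lemma \ref{lem pullback}; as the author notes, the two statements are essentially equivalent, so the argument is a short unpacking. The strategy is to restrict the three-row diagram \eqref{eq diag pullback} to the subfiber over a fixed $x \in X$, and observe that the map $a$ becomes a bijection there, so that the middle row of the lemma's diagram collapses onto the desired map $(b \circ a^{-1})_x$.

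First I would fix $x \in X$ and consider the subset of $f^*(T^*Y \oplus E)$ consisting of triples $(x, \xi, e)$ with first coordinate $x$. Since the defining constraint is $f(x) = \pi_Y(\xi) = q(e)$, fixing $x$ leaves $\xi$ free in $T^*_{f(x)}Y$ and $e$ free in $E_{f(x)}$. The map $a(x, \xi, e) = (\xi, e)$ therefore restricts to a bijection onto $\pi_Y^*E|_{T^*_{f(x)}Y}$, with inverse $a^{-1}(\xi, e) = (x, \xi, e)$. The composite $b \circ a^{-1}$ on this restricted domain coincides with $(b \circ a^{-1})_x$, since $b(x, \xi, e) = ((T_xf)^*\xi, x, e)$ and $(T_xf)^*\xi \in T^*_xX$, placing the output in $\pi_X^*(f^*E)|_{T^*_xX}$. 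Note also that $\widetilde{\sigma_{D_E}}$ preserves the first coordinate, so it restricts to a self-map of this subfiber.

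The proof then finishes by a direct chase. Given $(\xi, e) \in \pi_Y^*E|_{T^*_{f(x)}Y}$, lift via $a^{-1}$ to $(x, \xi, e)$; commutativity of the top square of \eqref{eq diag pullback} shows that $\widetilde{\sigma_{D_E}}(x, \xi, e)$ has $a$-image $\sigma_{D_E}(\xi, e)$, while commutativity of the bottom square shows that its $b$-image is $\sigma_{D_{f^*E}}\bigl(b(x,\xi,e)\bigr)$. Rewriting these two identities in terms of $(b\circ a^{-1})_x$ gives
\[
(b\circ a^{-1})_x \circ \sigma_{D_E}|_{T^*_{f(x)}Y} \;=\; b \circ \widetilde{\sigma_{D_E}} \circ a^{-1} \;=\; \sigma_{D_{f^*E}}|_{T^*_xX} \circ (b\circ a^{-1})_x,
\]
which is the required commutativity. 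There is no real obstacle here; the only thing to check carefully is that $a$ is bijective on the fixed-$x$ subfiber and that $b$ lands in $\pi_X^*(f^*E)|_{T^*_xX}$, both of which are immediate from the definitions.
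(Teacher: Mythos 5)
Your proof is correct and takes the approach the paper implicitly intends: the paper supplies no explicit proof, merely noting the corollary is "actually equivalent to Lemma \ref{lem pullback}," and your argument is precisely the right unpacking of that remark — restricting to the fixed-$x$ subfiber, observing $a$ becomes a bijection there and $\widetilde{\sigma_{D_E}}$ preserves the first coordinate, and chasing through the two squares of \eqref{eq diag pullback}.
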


One last remark that we will use later, is that the maps $(b \circ a^{-1})_x$ are injective if $T_xf$ is surjective. So if $f$ is a submersion, all $(b \circ a^{-1})_x$ are injective.

\subsection{The principal symbols of ${\D}_M^{{L^{2\omega}}}$ and $\tilde {\D}_M^{{L^{2\omega}}}$.} \label{symbDirac}

Let $g^N$ and $g^M$ be the Riemannian metrics on $N$ and $M$, respectively, induced by the $\Spin^c$-structures $P^N$ and $P^M$.
 We use the same notation for the map
$g^M : TM \to T^*M$ given by $v \mapsto g^M(v, \relbar)$, and similarly for $g^N$.
The Dirac operators ${\D}_M^{{L^{2\omega}}}$ and ${\D}_N^{{L^{2\nu}}}$ have principal symbols
\[
\begin{split}
\sigma_{{\D}_M^{{L^{2\omega}}}}: & \pi_M^*\SSS^M \to \pi_M^*\SSS^M;\\
\sigma_{{\D}_N^{{L^{2\nu}}}}: & \pi_N^*\SSS^N \to \pi_N^*\SSS^N,
\end{split}
\]
given by the Clifford action:
\begin{align}
\sigma_{{\D}_M^{{L^{2\omega}}}}(\xi, s^M)   &= \bigl(\xi, c_{TM}\bigl(i (g^M)^{-1}(\xi)\bigr)s^M \bigr); \label{eq sym DMLM} \\
\sigma_{{\D}_N^{{L^{2\nu}}}}(\eta, s^N)   &= \bigl(\eta, c_{TN}\bigl(i(g^N)^{-1}(\eta)\bigr)s^N \bigr), \nonumber
\end{align}
for $m \in M$,  $\xi \in T_m^*M$, $s^M \in \SSS^M_m$ and $n \in N$, $\eta \in T_n^*N$, $s^N \in \SSS^N_n$.

To determine the principal symbol of $\tilde {\D}_M^{{L^{2\omega}}}$, we need the following basic fact:
\begin{lemma} \label{lem symb DGK} \index{principal symbol!of the operator ${\D}_{G,K}$}
The principal symbol of the operator ${\D}_{G,K}$ on the trivial bundle $G \times \Delta_{d_{\p}} \to G$ is given by
\[
\sigma_{{\D}_{G,K}}(g, \xi, \delta_{\p}) = (g, \xi, c_{\p}(i\xi_{\p^*}) \delta_{\p}),
\]
for $g \in G$, $\xi \in \g^*$ and $\delta_{\p} \in \Delta_{d_{\p}}$. Here $\xi_{\p^*}$ is the component of $\xi$ in $\p^* \cong \kk^0$ according to $\g^* = \p^0 \oplus \kk^0$, and we identify $\p^*$ with $\p$, and $\p$ with $\R^{d_{\p}}$, using a $B$-orthonormal basis $\{X_1, \ldots, X_{d_{\p}}\}$ of $\p$.
\end{lemma}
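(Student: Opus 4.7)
The plan is to compute the principal symbol directly from the local formula \eqref{DV} for ${\D}_{G,K}$, which with $V=\C$ reads
\[
{\D}_{G,K} = \sum_{j=1}^{d_{\p}} X_j \otimes c(X_j),
\]
where $X_j$ acts as a left-invariant vector field on $G$ and $c(X_j)$ acts on $\Delta_{d_{\p}}$ via the Clifford action. Since this is a first-order differential operator on the trivial bundle $G \times \Delta_{d_{\p}}$, its principal symbol at $(g,\xi) \in T^*G$ is obtained by the standard procedure of applying the operator to a section of the form $e^{if} \delta$ (with $\delta \in \Delta_{d_{\p}}$ viewed as a constant section in the trivialisation and $f \in C^\infty(G)$) and extracting the coefficient of the top-order term.

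First, I would carry out this computation: for $g \in G$,
\[
{\D}_{G,K}(e^{if}\delta)(g) = \sum_{j=1}^{d_{\p}} i\,(X_jf)(g)\,e^{if(g)}\,c(X_j)\delta,
\]
so with $\xi = df(g) \in T_g^*G$ one obtains
\[
\sigma_{{\D}_{G,K}}(g,\xi)\delta = i \sum_{j=1}^{d_{\p}} \langle \xi, X_j(g)\rangle\, c(X_j)\delta.
\]
This matches the same convention (a factor of $i$ in the symbol) used for $\D_M^{{L^{2\omega}}}$ earlier in the paper.

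Next I would unwind the identifications in the statement. Using left-trivialisation $T^*G \cong G \times \g^*$, the covector $\xi \in T_g^*G$ corresponds to an element of $\g^*$, which we still call $\xi$, with $\langle \xi, X_j(g)\rangle = \langle \xi, X_j\rangle$ (the latter pairing being between $\g^*$ and $\g$). Decomposing $\g^* = \p^0 \oplus \kk^0$, only the $\kk^0 \cong \p^*$ component $\xi_{\p^*}$ pairs nontrivially with $X_j \in \p$, so the sum above may be rewritten as $i\sum_j \langle \xi_{\p^*}, X_j\rangle c(X_j)$. Using the $B$-orthonormal basis $\{X_j\}$ to identify $\p^* \cong \p \cong \R^{d_{\p}}$, the element $\xi_{\p^*} \in \p$ has coordinates $\langle \xi_{\p^*}, X_j\rangle$, so the sum is precisely $c_{\p}(i\xi_{\p^*})\delta$, yielding the claimed formula.

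I do not expect any genuine obstacle here: the lemma is essentially a bookkeeping exercise. The only place one has to be careful is keeping track of the three identifications used simultaneously, namely $T^*G \cong G \times \g^*$ via left translation, the splitting $\g^* = \p^0 \oplus \kk^0$ (so that $\kk^0$ is the summand that pairs with $\p$), and $\p^* \cong \p$ via the restriction of the Killing form and the chosen orthonormal basis. Once those are fixed, the formula drops out of the one-line symbol computation above.
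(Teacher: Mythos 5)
Your proposal is correct and matches the paper's proof in approach: both compute the symbol by applying $\D_{G,K}$ to oscillatory sections (the paper uses the $\lim_{\lambda\to\infty}\frac1\lambda e^{-i\lambda f}\D_{G,K}(e^{i\lambda f}\tau)$ formula, which for a first-order operator reduces to the shortcut you use), and both then unwind the left-trivialisation $T^*G\cong G\times\g^*$, the splitting $\g^*=\p^0\oplus\kk^0$, and the identification $\p^*\cong\p\cong\R^{d_\p}$ via the $B$-orthonormal basis to identify $i\sum_j\langle\xi,X_j\rangle c(X_j)$ with $c_\p(i\xi_{\p^*})$. No substantive difference.
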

\begin{proof}
Let $g \in G$, $f \in C^{\infty}(G)$ and $\tau \in C^{\infty}(G, \Delta_{d_{\p}})$ be given. Then
\[
\begin{split}
\sigma_{{\D}_{G,K}}(d_gf, \tau(g)) &= \bigl(d_gf, \lim_{\lambda \to  \infty} \frac{1}{\lambda} \bigl(e^{-i\lambda f} {\D}_{G,K} (e^{i\lambda f}\tau) \bigr)(g)\bigr) \\
	&=
\bigl(d_gf, \lim_{\lambda \to  \infty} \frac{1}{\lambda} \bigl(e^{-i\lambda f} \sum_{j} c_{\p}(X_j) X_j(e^{i\lambda f}\tau) \bigr)(g)\bigr). \\
\end{split}
\]
This expression equals
\begin{multline*}
\Bigl(d_gf, \lim_{\lambda \to  \infty} \frac{1}{\lambda} \bigl( \sum_{j} c_{\p}(X_j)\bigl(i\lambda X_j(f)\tau + X_j(\tau) \bigr) \bigr)(g)\Bigr)
	\\ = \Bigl(d_gf, i\sum_j c_{\p}(X_j) \langle d_gf, T_el_g (X_j) \rangle \tau(g) \Bigr).
\end{multline*}

Hence for all $ \xi  \in \g^*$, $\delta_{\p} \in \Delta_{d_{\p}}$, we have
\[
\begin{split}
\sigma_{{\D}_{G,K}}(g, \xi, \delta_{\p}) &= \Bigl(g, \xi, i\sum_j c_{\p}(\langle \xi, X_j\rangle X_j) \delta_{\p} \Bigr) \\
	&=  \left(g, \xi,  c_{\p}( i\xi_{\p}) \delta_{\p} \right),
\end{split}
\]
since $\{X_j\}$ is a basis of $\p$, orthonormal  with respect to the Killing form.
\end{proof}

We are now ready to prove that $\D_M^{{L^{2\omega}}}$ and $\tilde {\D}_M^{{L^{2\omega}}}$ have the same principal symbol, and hence define the same class in $K$-homology. This will conclude the proof of Proposition \ref{alphaDirac}, which was the remaining step in the proof of Theorem \ref{thm [Q,I]=0}. As we saw in Subsection \ref{sec [Q,I] = [Q,R]}, the latter theorem implies Theorem \ref{GSss}, which is our second main result.
\begin{proposition} \label{prop tilde sigma}
The following diagram commutes:
\begin{equation} \label{diagsigmaDtilde}
\xymatrix{
\pi_M^* \SSS^M \ar[r]^{\sigma_{{\D}_M^{{L^{2\omega}}}}} \ar[d]^{\cong} & \pi_M^*\SSS^M  \ar[d]^{\cong} \\
\pi_M^*\bigl( \bigl((G \times \Delta_{d_{\p}}) \boxtimes \SSS^N \bigr)/K \bigr) \ar[r]^{\sigma_{\tilde {\D}_M^{{L^{2\omega}}}}}  & \pi_M^*\bigl( \bigl((G \times \Delta_{d_{\p}}) \boxtimes \SSS^N \bigr)/K \bigr)  \\
p^* \bigl(T^*M \oplus ((G \times \Delta_{d_{\p}}) \boxtimes \SSS^N )/K \bigr) \ar[u]^{a} \ar[d]_{b} \ar[r]^{\widetilde{\sigma_{\tilde {\D}_M^{{L^{2\omega}}}}}} & p^* \bigl(T^*M \oplus ((G \times \Delta_{d_{\p}}) \boxtimes \SSS^N )/K \bigr) \ar[u]^{a} \ar[d]_{b}\\
\pi_{G \times N}^* \bigl(p^*  ((G \times \Delta_{d_{\p}}) \boxtimes \SSS^N )/K\bigr) \ar[d]^{\cong}_{h} \ar[r]&
		\pi_{G \times N}^* \bigl(p^*  ((G \times \Delta_{d_{\p}}) \boxtimes \SSS^N )/K\bigr) \ar[d]^{\cong}_{h} \\
\pi_{G \times N}^* \bigl((G \times \Delta_{d_{\p}}) \boxtimes \SSS^N \bigr) \ar[d]^{\cong}_{\theta} \ar[r]^{\sigma_{{\D}_{G,K} \otimes 1 + 1 \otimes {\D}_N^{{L^{2\nu}}}}} &
\pi_{G \times N}^* \bigl((G \times \Delta_{d_{\p}}) \boxtimes \SSS^N \bigr) \ar[d]^{\cong}_{\theta} \\
\pi_G^*(G \times \Delta_{d_{\p}}) \boxtimes \pi_N^*\SSS^N  \ar[r]^{\sigma_{{\D}_{G,K}} \otimes 1 + 1 \otimes \sigma_{{\D}_N^{L^{2\nu}}}} &
	\pi_G^*(G \times \Delta_{d_{\p}}) \boxtimes \pi_N^*\SSS^N .
}
\end{equation}
Here the isomorphism $h$ is induced by the general isomorphism $p^*(E/H) \cong E$.
The fourth horizontal map from the top is just defined as the composition $h^{-1}\circ (\sigma_{{\D}_{G,K} \otimes 1 + 1 \otimes {\D}_N^{{L^{2\nu}}}})\circ h$, i.e.\ by commutativity
of the second square from the bottom.
\end{proposition}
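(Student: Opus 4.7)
The plan is to verify commutativity of diagram \eqref{diagsigmaDtilde} one sub-square at a time, so that the content reduces to a single geometric assertion about the Clifford action on $\SSS^M$.

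The lower portion is formal. The bottom square commutes by Lemma \ref{tensor}, applied to the external tensor product $(G \times \Delta_{d_{\p}}) \boxtimes \SSS^N$ and to the operator $\D_{G,K} \otimes 1 + 1 \otimes \D_N^{L^{2\nu}}$. The square immediately above it commutes by the very definition of the fourth horizontal map, which is obtained by transport along $h$. The square whose legs are $a$ and $b$ is Lemma \ref{lem pullback} applied to the submersion $p : G \times N \to M$, together with the relation $p^{*}(\tilde{\D}_M^{L^{2\omega}} s) = (\D_{G,K} \otimes 1 + 1 \otimes \D_N^{L^{2\nu}})(p^{*}s)$ for $K$-invariant sections $s$, which is exactly how $\tilde{\D}_M^{L^{2\omega}}$ was defined in \eqref{eq tilde Dirac}.

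The real content sits in the top square: under the isomorphism $\SSS^M \cong ((G \times \Delta_{d_{\p}}) \boxtimes \SSS^N)/K$ of Lemma \ref{SpinorM}, the Clifford-multiplication symbol \eqref{eq sym DMLM} of $\D_M^{L^{2\omega}}$ must agree with the sum-of-Clifford-actions symbol computed on the right-hand side. I would verify this pointwise. By $G$-equivariance it suffices to work at a point $[e,n]$. There the tangent space decomposes as $T_{[e,n]}M \cong \p \oplus T_nN$ by Proposition \ref{TM} and Lemma \ref{TM2}, and $P^M$ was assembled from $P^{G/K}_M$ and $P^N$ by the direct-sum construction of Lemma \ref{lem Spinc sum}. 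That construction is designed precisely so that under the induced isomorphism $\SSS^M_{[e,n]} \cong \Delta_{d_{\p}} \otimes \SSS^N_n$, the Clifford action satisfies
\[
c_{TM}(v_{\p} + v_N) = c_{\p}(v_{\p}) \otimes 1 + \varepsilon \otimes c_{TN}(v_N),
\]
where $\varepsilon$ is the grading operator on $\Delta_{d_{\p}}$. Combining this splitting with Lemma \ref{lem symb DGK}, which computes $\sigma_{\D_{G,K}}$ as Clifford multiplication by $i\xi_{\p^{*}}$, gives the identity of symbols.

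The main obstacle will be the bookkeeping of the identifications needed to read off both symbols at $[e,n]$. In particular, one must trace a cotangent vector $\xi \in T^{*}_{[e,n]}M$ through the splitting of Corollary \ref{cor TM}, through the cotangent map $(T_{(e,n)}p)^{*}$ that appears in the $b$-leg of the pullback square, and through the identifications $\p^{*} \cong \p \cong \R^{d_{\p}}$ that are implicit in Lemma \ref{lem symb DGK}. One must also check that the Riemannian metric $g^M$ induced by $P^M$ restricts on $\p$ to the metric coming from the Killing form and on $T_nN$ to the metric $g^N$ induced by $P^N$; this is built into the construction of Lemma \ref{lem Spinc sum}. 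Once these identifications are lined up, the top square reduces to the decomposition of the Clifford action above, completing the proof.
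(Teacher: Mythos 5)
Your proposal follows essentially the same route as the paper: the lower squares are handled by Lemmas \ref{tensor} and \ref{lem pullback} together with the defining relation $(\D_{G,K}\otimes 1 + 1\otimes \D_N^{L^{2\nu}})p^*s = p^*(\tilde\D_M^{L^{2\omega}}s)$, and the remaining content is the decomposition of the Clifford action of $TM \cong \p \oplus TN$ under $\Delta_{d_M}\cong\Delta_{d_\p}\otimes\Delta_{d_N}$. The paper organizes the final step slightly differently — it traces an explicit element around the outer rectangle of \eqref{diagsigmaDtilde} and then deduces commutativity of the top square from injectivity of $\theta\circ h\circ(b\circ a^{-1})_{(g,n)}$, which holds because $p$ is a submersion — but this is the same pointwise Clifford-action comparison you propose at $[e,n]$, and your explicit grading twist $\varepsilon\otimes c_{TN}$ is the careful way to write the tensor-product Clifford action that the paper invokes more tersely via Friedrich's convention.
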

\begin{proof}
It follows from Lemma \ref{tensor} that the bottom square of \eqref{diagsigmaDtilde} commutes. Note that
\[
\bigl({\D}_{G,K} \otimes 1 + 1 \otimes {\D}_N^{{L^{2\nu}}} \bigr)p^*s = p^* \bigl(\tilde {\D}_M^{{L^{2\omega}}}s \bigr)
\]
for all $s \in \Gamma^{\infty}\bigl( \bigl((G \times \Delta_{d_{\p}}) \boxtimes \SSS^N  \bigr)/K\bigr)$.
We can therefore apply Lemma \ref{lem pullback} to see that the second and third squares in \eqref{diagsigmaDtilde} from the top commute as well.
 We will first show that the outside of diagram \eqref{diagsigmaDtilde} commutes, and then deduce commutativity of the top subdiagram.

Let $g \in G$, $n \in N$, $\eta \in T^*_nN$, $\xi \in \p^*$,  $p^N \in P^N$, $\delta_{\p} \in \Delta_{d_{\p}}$ and $\delta_N \in \Delta_{d_N}$ be given. Then we have the element
\begin{equation} \label{eq elt A}
\bigl((g,n), [g, \eta, \xi], \bigl[(g, \delta_{\p}) \otimes [p^N, \delta_N]  \bigr] \bigr) \in
	p^* \bigl(T^*M \oplus ((G \times \Delta_{d_{\p}}) \boxtimes \SSS^N )/K \bigr).
\end{equation}
Here we have used Proposition \ref{TM} and Lemma \ref{TM2}.
Applying the map $a$ and the (inverse of the) isomorphism in the upper left corner of \eqref{diagsigmaDtilde} to this element, we obtain
\begin{multline} \label{eq a van iets}
\bigl([g, \eta, \xi], \bigl[[g,n,e_{\Spin^c(\p)}], [g,p^N], \delta_{\p} \otimes \delta_N \bigr] \bigr) \\
 \in \pi_M^*\bigl(P_M^{G/K} \times_{\U(1)} (G \times_K P^N) \times_H \Delta_{d_{\p}} \otimes \Delta_{d_N} \bigr) \\
\cong \pi_M^*\SSS^M.
\end{multline}
Here $e_{\Spin^c(\p)}$ is the identity element of $\Spin^c(\p)$.

Let $\zeta \in \bigl(\R^{d_N} \bigr)^*$ be the covector such that $\eta \in T^*N$ corresponds to $[p^N, \zeta] \in P^N \times_{\Spin^c(d_N)} \bigl(\R^{d_N} \bigr)^*$. Then $\sigma_{\tilde {\D}_M^{{L^{2\omega}}}}$ applied to \eqref{eq a van iets} gives
\[
\bigl([g, \eta, \xi], \bigl[[g,n,e_{\Spin^c(\p)}], [g,p^N],
	c_{\p \oplus \R^{d_N}}(i\xi, i\zeta) (\delta_{\p} \otimes \delta_N) \bigr] \bigr),
\]
where we identify $\bigl(\R^{d_N} \bigr)^* \cong \R^{d_N}$ using the standard Euclidean metric, and $\p^* \cong \p$ using the Killing form.
By definition of the Clifford modules $\Delta_k$ (see e.g.\ \cite{Friedrich}, page 13), this equals
\[
\bigl([g, \eta, \xi], \bigl[[g,n,e_{\Spin^c(\p)}], [g,p^N],
	c_{\p}(i\xi)\delta_{\p} \otimes \delta_N + \delta_{\p} \otimes c_{\R^{d_N}}(i\zeta) \delta_{d_N} \bigr] \bigr).
\]
(This is the central step in the proof of Proposition \ref{alphaDirac}.)

The image of the latter element under the maps $\theta \circ h \circ (b \circ a^{-1})_{(g,n)}$ is
\[
\bigl((g,\xi), (g, c_{\p}(i\xi)\delta_{\p}) \bigr) \otimes \bigl( \eta, [p^N, \delta_N] \bigr) +
\bigl((g,\xi), (g, \delta_{\p}) \bigr) \otimes \bigl( \eta, [p^N, c_{\R^{d_N}}(i\zeta)\delta_N] \bigr), \
\]
which  by Lemma \ref{lem symb DGK} equals the image under the map
\[
\bigl(\sigma_{{\D}_{G,K}} \otimes 1 + 1 \otimes \sigma_{{\D}_N^{{L^{2\nu}}}} \bigr) \circ \theta \circ  h \circ b
\]
of \eqref{eq elt A}.
Therefore, the outside of diagram \eqref{diagsigmaDtilde} commutes.

Now note that for all $(g,n) \in G \times N$, the composition $\theta \circ h \circ (b \circ a^{-1})_{(g,n)}$ is injective, because $p$ is a submersion (see the remark after Corollary \ref{cor pullback}). This fact, together with commutativity of the outside of diagram \eqref{diagsigmaDtilde}, implies that the top part of \eqref{diagsigmaDtilde} commutes as well.
\end{proof}

\subsection*{Assumptions and notation}

In this paper, we have used the following assumptions and notation.

\subsection*{Assumptions}

\begin{itemize}
\item All manifolds and all maps between them are supposed to be smooth. In particular, all group actions are smooth.
\item All momentum maps are supposed to be equivariant with respect to the coadjoint action.
\item Unless stated otherwise, all vector bundles except those constructed from tangent bundles are supposed to be complex.
\end{itemize}

\subsection*{Notation}

\subsubsection*{Groups}

\begin{itemize}
\item $H$: a group;
\item $G$: connected semisimple Lie group with finite centre (except in Subsections \ref{sec tangent}, \ref{sec top} and \ref{sec bottom});
\item $K<G$: maximal compact subgroup;
\item $T<K$: maximal torus, also supposed to be a Cartan subgroup of $G$ (i.e.\ $\rank G = \rank K$);
\item $\torus \subset \kk \subset \g$: the respective Lie algebras;
	 (we identify the dual space $\torus^*$ with the subspace $(\kk^*)^{\Ad^*(T)}$ of $\Ad^*(T)$-invariant elements);
\item $B$: the Killing form on $\g$;
\item $\p \subset \g$: the ($\Ad(K)$-invariant) orthogonal complement of $\kk$ in $\g$ with respect to $B$;
\item $V^{0}$: for a subspace $V$ of a vector space $W$, the annihilator of $V$ in $W^*$, i.e.\ the space $\{\xi \in W^*; \xi|_V = 0\}$
	 (we identify $\kk^*$ with the annihilator $\p^0 \subset \g^*$, and $\p^*$ with $\kk^0$);
\item $\torus^*_+ \subset \torus^*$: a choice of positive Weyl chamber;
\item $R = R(\g, \torus)$: the set of roots of $(\g, \torus)$;
\item $R_c = R(\kk, \torus)$: the set of roots of $(\kk, \torus)$, considered as a subset of $R$;
\item $R_n = R \setminus R_c$: the set of noncompact roots of $(\g, \torus)$;
\item $R^+$: the set of positive roots of $(\g, \torus)$ with respect to $\torus^*_+$;
\item $R_c^+, R_n^+$:  $R_c \cap R^+$ and $R_n \cap R^+$, respectively;
\item $\rho, \rho_c, \rho_n$: half the sum of the positive roots in $R^+$, $R_c^+$ and $R_n^+$, respectively;
\item $W(\g, \torus), W(\kk, \torus)$: the Weyl groups of $(\g, \torus)$ and $(\kk, \torus)$, respectively.
\item $T^{\reg}$: the dense subset of regular elements of $T$:
	$T^{\mathrm{reg}}:= \{\exp X; X \in \torus, (\alpha, X) \not\in 2\pi i \, \Z \text{ for all $\alpha \in R(\g, \torus)$}\}$;
\item $\ncw \subset \torus^*$: the union of the `noncompact walls', i.e.\ the set of
	 $\xi \in \torus^*$ such that for some $\alpha \in R_n$, we have $(\alpha, \xi) =0$;
\item $\g^*_{\mathrm{ell}}$; the set of elliptic elements of $\g^*$, equal to $\Ad(G)\kk^*$;
\item $\gse$; the set of strongly elliptic elements of $\g^*$, equal to the interior of $\gse$, to the set of elements of $\g^*$ with compact stabilisers, and to
$\Ad(G)(\torus^*_+ \setminus \ncw)$;
\item $\O^{\xi}$: for $\xi \in \g^*$ or $\xi \in \kk^*$, the coadjoint orbit $\Ad^*(G)\xi$ of $G$ or the coadjoint orbit $\Ad^*(K)\xi$ of $K$, where appropriate;
\item $\O^{\lambda}$: for $\lambda \in i\g^*$ or $\lambda \in i\kk^*$, the coadjoint orbit $\O^{-i\lambda}$;
\item $I_k$; for $k \in \N$, the $k\times k$ identity matrix.
\end{itemize}

\subsubsection*{Representations}

\begin{itemize}
\item $R(K)$: the representation ring of $K$;
\item $m V$: for $m \in \Z$ and $V$ a representation space of $K$, the $m$-fold direct sum $V \oplus \cdots \oplus V$ if $m>0$, minus the $|m|$-fold direct sum $V \oplus \cdots \oplus V$ if $m<0$, and the zero space if $m=0$;
\item $\Lambda_+ \subset i\torus^*_+$ or $\Lambda^{\kk}_+$: the set of dominant weights of $(\kk, \torus)$ with respect to $\torus^*_+$;
\item $V_{\lambda}$: for $\lambda \in \Lambda_+$, the irreducible representation of $K$ with highest weight $\lambda$;
\item $\chi_V$: for $V$ a representation, the character of $V$;
\item $\chi_{\lambda}$: for $\lambda \in \Lambda_+$, the character of $V_{\lambda}$;
\item $[V:W]$: for two representations $V, W$ of a group $H$, the multiplicity of $W$ in $V$, equal to
  $\dim \Hom(V, W)^K$;
\item $\Delta_{2k+1}$: for $K \in \N$  the canonical irreducible representation of the group $\Spin(2k+1)$ (see \cite{Friedrich});
\item $\Delta_{2k} = \Delta_{2k}^+ \oplus \Delta_{2k}^-$: for $k \in \N$, the canonical representation of $\Spin(k)$, split into two irreducible subrepresentations;
\item $c:V \to \End(\Delta_V)$, for $V$ a vector space equipped with a bilinear form, the Clifford action of $V$ on $\Delta_V$ (see \cite{Friedrich});
\item $\widetilde{\Ad}$: the homomorphism (if it exists) $K \to \Spin(\p)$ such that $\lambda \circ \widetilde{\Ad} = \Ad: K \to \SO(\p)$, with $\lambda: \Spin(\p) \to \SO(\p)$ the double covering map;
\item $\HH$: a Hilbert space;
\item $R_G^{\HH}$: for an irreducible discrete series representation $\HH$ of $G$, the
  reduction map for $G$ defined in \eqref{RG};
\item $\K(\HH)$: for $\HH$ a Hilbert space, the algebra of compact operators on $\HH$.
\end{itemize}

\subsubsection*{Topological spaces, manifolds and vector bundles}

For any topological space $X$,
\begin{itemize}
\item $C_0(X)$: the space of (complex valued) continuous functions on $X$ that vanish at infinity;
\item $C_c(X)$: the space of (complex valued) compactly supported continuous functions on $X$;
\item $h\cdot s$: for $h$ an element of a group $H$ acting on  $X$, and $s$ a section of an $H$-vector bundle over $X$, the section given by
$(h\cdot s)(x) = h\cdot s(h^{-1}x)$;
\item $E \boxtimes F$: for $E \to X$ a vector bundle and $F \to Y$ a vector bundle over another space, the exterior tensor product $p_X^*E \otimes p_Y^*F \to X \times Y$, with $p_X, p_Y: X \times Y \to X, Y$ the canonical projections.
\end{itemize}

\noindent
For any manifold $M$,
\begin{itemize}
\item $d_M$:  the dimension of $M$;
\item $C^{\infty}(M)$: the space of (complex valued) smooth functions on $M$;
\item $\pi_M$:  the cotangent bundle projection $\pi_M: T^*M \to M$;
\item $\mathfrak{X}(M)$:  the space of vector fields on $M$;
\item $X_M$: for $X$ in the Lie algebra of a group acting on  $M$, the induced vector field on $M$
	(the subscript $M$ will often be omitted);
\item $\Gamma^{\infty}(E)$: for a smooth vector bundle $E$ over a given manifold, the space of smooth sections of $E$, also denoted by $\Gamma^{\infty}(M, E)$;
\item $R_{\nabla}$: for $\nabla$ a connection on a vector bundle, the curvature of $\nabla$;
\item $H_m, \h_m$: for a given action of a Lie group $H$ on  $M$, and a point $m \in M$,
  the global and infinitesimal stabiliser of $m$, respectively;
\item $(P^M, \psi^M)$: a $\Spin^c$-structure on $M$, that is, a principal $\Spin^c(d_M)$-bundle $P^M \to M$ and an isometric vector bundle isomorphism $\psi^M: P^M \times_{\Spin^c(d_M)} \R^{d_M} \to TM$;
\item $\SSS^M$: if $M$ has a $\Spin^c$-structure $(P^M, \psi^M)$, the spinor bundle $\SSS^M = P^M \times_{\Spin^c(d_M)} \Delta_{d_M}$;
\item $\D_M^E$: if $M$ has a  $\Spin^c$-structure and $E \to M$ is a vector bundle with a connection, the $\Spin^c$-Dirac operator on $M$ coupled to $E$ (see \cite{Duistermaat}, \cite{Friedrich});
\item $\D_{G,K}$: the differential operator on the trivial bundle $G \times \Delta_{d_{\p}} \to G$ given by \eqref{DV}, with $V=\C$ the trivial representation.
\end{itemize}

\subsubsection*{Symplectic geometry}

\begin{itemize}
\item $(M, \omega)$: a symplectic manifold carrying a Hamiltonian action of $G$
  (in Subsection \ref{sec ind}, this is to be proved);
\item $P^M$, a $G$-equivariant $\Spin^c$ structure on $M$, whose determinant line bundle has Chern class compatible $2\omega$;
\item $\Phi^M$: the momentum map of this action;
\item $(N, \nu)$: a symplectic manifold carrying a Hamiltonian action of $K$
  (in Subsection \ref{sec res}, this is to be proved);
\item $P^N$:  a $K$-equivariant $\Spin^c$ structure on $N$, whose determinant line bundle has Chern class compatible $2\nu$;
\item $\Phi^N$: the momentum map of this action;
\item $\Phi^M_X, J_Y$: for $X \in \g$ and $Y \in \kk$, the pairings $\langle \Phi^M, X \rangle$ and $\langle \Phi^N, X \rangle$, respectively;
\item $M_{\xi}$: for $\xi \in \g^*$, the symplectic reduction $\bigl(\Phi^M\bigr)^{-1}(\xi)/G_{\xi}$;
\item $N_{\xi}$: for $\xi \in \kk^*$, the symplectic reduction $\bigl(\Phi^N\bigr)^{-1}(\xi)/K_{\xi}$;
\item $M_{\lambda}$, $N_{\lambda}$: for $\lambda \in i\g^*$ or $i\kk^*$ respectively, the symplectic reductions $M_{-i\lambda}$ and $N_{-i\lambda}$.
\end{itemize}

\subsubsection*{(Unbounded) $\KK$-theory}
\begin{itemize}
\item $\Psi^H(A,B)$: for a group $H$, and $A$ and $B$ $H$-$C^*$-algebras, the semigroup of equivariant unbounded Kasparov cycles over $A$ and $B$ (see \cite{BJ}).
\end{itemize}

\noindent
Peter Hochs \\
Institute for Mathematics, Astrophysics and Particle Physics \\
Radboud University \\
Toernooiveld 1 \\
6525 ED Nijmegen \\
The Netherlands\\
\texttt{peter.hochs@tno.nl}


\end{document}